\title{
Convergence of non-reversible Markov processes via lifting and flow Poincaré inequality}
\author{
Andreas Eberle\thanks{Institute for Applied Mathematics, University of Bonn. E-Mail: \href{mailto:eberle@uni-bonn.de}{eberle@uni-bonn.de}}\qquad 
Arnaud Guillin\thanks{Laboratoire de Math\'ematiques Blaise Pascal, CNRS and Universit\'e Clermont Auvergne. E-Mail: \href{mailto:arnaud.guillin@uca.fr}{arnaud.guillin@uca.fr}}\qquad 
Leo Hahn\thanks{Université de Neuchâtel. E-Mail: \href{mailto:}{leo.hahn@unine.ch}}\\ 
Francis L\"orler\thanks{Institute for Applied Mathematics, University of Bonn. E-Mail: \href{mailto:loerler@uni-bonn.de}{loerler@uni-bonn.de}}\qquad
Manon Michel\thanks{Laboratoire de Math\'ematiques Blaise Pascal, CNRS and Universit\'e Clermont Auvergne. E-Mail: \href{mailto:manon.michel@uca.fr}{manon.michel@uca.fr}}}
\date{\today}
\newcommand{\Lhat}{{\hat{\mathcal L}}}
\newcommand{\mL}{{\mathcal L}}
\newcommand{\E}{\mathcal{E}}
\newcommand{\Ld}{{\hat{\mathcal L}_\mathrm{tr}}}
\newcommand{\Lv}{{\hat{\mathcal L}_v}}
\newcommand{\myAngle}[1]{\left< #1 \right>_{L^2(\lambda \otimes \mu)}}
\newcommand{\myVert}[1]{\left\Vert #1 \right\Vert_{L^2(\lambda \otimes \mu)}}
\newcommand{\myVertHat}[1]{\left\Vert #1 \right\Vert_{L^2(\lambda \otimes \hat\mu)}}
\newcommand{\LvPoincareConstant}{{\frac1{m_v}}}
\newcommand{\LvInvPoincareConstant}{m_v}
\newcommand{\Shat}  {{\hat{\mathcal S}}}
\renewcommand{\S}   {\mathcal{S}}
\newcommand{\V}     {\mathcal{V}}
\newcommand{\diff}  {\mathop{}\!\mathrm{d}}
\newcommand{\comp}  {\mathrm{c}}
\newcommand{\R}     {\mathbb{R}}
\newcommand{\D}     {\mathrm{D}}
\DeclareMathOperator{\dom}  {Dom}
\DeclareMathOperator{\Gap}  {Gap}
\let\Re\relax
\DeclareMathOperator{\Re}   {Re}
\DeclareMathOperator{\Var}  {Var}
\DeclareMathOperator{\spn}  {span}
\theoremstyle{plain}
\newtheorem{Thm}{Theorem}
\newtheorem{Lem}[Thm]{Lemma}
\newtheorem{corollary}[Thm]{Corollary}
\theoremstyle{definition}
\newtheorem{Def}[Thm]{Definition}
\newtheorem{Rem}[Thm]{Remark}
\newtheorem{Exa}[Thm]{Example}
\newtheorem{Asm}{Assumption}
\newtheorem{asm}{Assumption}
\begin{document}

\maketitle

\begin{abstract}
    We propose a general approach for quantitative convergence analysis of non-reversible Markov processes, based on the concept of second-order lifts and a variational approach to hypocoercivity. To this end, we introduce the flow Poincaré inequality, a space-time Poincaré inequality along trajectories of the semigroup, and a general divergence lemma based only on the Dirichlet form of an underlying reversible diffusion.
    We demonstrate the versatility of our approach by applying it to a pair of run-and-tumble particles with jamming, a model from non-equilibrium statistical mechanics, and several piecewise deterministic Markov processes used in sampling applications. Our framework in particular includes processes with general stochastic jump kernels.

    \par\vspace\baselineskip
    \noindent\textbf{Keywords:} Lift; non-reversible Markov process; run-and-tumble particles; piecewise deterministic Markov process.\par
\end{abstract}

\section{Introduction}

For reversible Markov processes, 
the inverse Poincaré constant or spectral gap is a powerful tool for deriving quantitative convergence bounds, as it coincides with the rate of convergence and the asymptotic decorrelation time.  For non-reversible processes, however, the inverse Poincaré constant can be much lower than the convergence rate, and the process may not even satisfy a Poincaré inequality but still converge rapidly. In fact, acceleration of convergence stemming from non-reversibility has been shown in different models \cite{HHS93,diaconis00,chen99,HHS05,LNP13,GM16,CH13,EGZ19} and
observed in many more \cite{Horowitz91,MKK14,NMKH15,bierkens19,Neal2011HMC}. The design of fast non-reversible processes often relies on the lifting procedure, which introduces an enlarged position-velocity state space to incorporate persistence, as introduced in \cite{diaconis00} for discrete chains. In the continuous-time and -space setting, this approach has led to the use of piecewise deterministic Markov processes (PDMPs) \cite{davis84}. These processes combine deterministic motion with discrete velocity jumps at event times, along with velocity refreshments typically occurring after exponentially distributed waiting times \cite{MKK14,bierkens19,bouchardcote18}.  Despite their impressive numerical performance, the analytical understanding of these processes remains limited, posing challenges for rigorous theoretical analysis. Therefore, obtaining quantitative bounds on the rate of convergence of non-reversible processes has attracted a lot of attention, while being much more involved than in the reversible case. For degenerate diffusions, this active area of research has become known as hypocoercivity \cite{herau07,villani09,DMS15}. Recently, a variational approach to hypocoercivity pioneered by Albritton, Armstrong, Mourrat and Novack \cite{albritton2024variational} was applied by Cao, Lu and Wang \cite{cao23} for Langevin dynamics, for the first time obtaining rates of convergence of the correct order. This approach has further been applied to discretisations of Langevin dynamics \cite{Lehec25} and to show that some PDMPs exhibit sharp dimensional dependence \cite{lu22}.

Inspired by the notion of lifts of discrete Markov chains \cite{diaconis00,chen99}, second-order lifts of reversible diffusions \cite{eberle24} were introduced as their counterpart in continuous time and space. This enables the analysis of the convergence rate of non-reversible Markov processes by yielding an upper bound in terms of the spectral gap of an associated reversible collapsed process, generalising the related result for discrete Markov chains \cite{chen99} by showing that the relaxation time can at most be improved by a square root through lifting. Furthermore, the variational approach to hypocoercivity \cite{albritton2024variational,cao23,lu22} can be simplified by phrasing it in the language of second-order lifts, allowing for a systematic derivation of lower bounds on the convergence rates. The second-order lift property enables the exploitation of properties of the simpler collapsed process to obtain bounds on the convergence rate of the lifted process. 

\medskip

The principle of energy dissipation, i.e.\ differentiating the $L^2$-norm along the semigroup and controlling this dissipation by the original $L^2$-norm, is often used to get an exponential decay in $L^2$. In the elliptic setting, where $(P_t)_{t\geq 0}$ is the transition semigroup of a reversible Markov process with invariant probability measure $\mu$ and generator $\mL$, this can be achieved through the Poincaré inequality
\begin{equation*}
    \lVert f\rVert_{L^2(\mu)}^2\ \leq\ \frac{1}\nu\bigl\langle f,-\mL f\bigr\rangle_{L^2(\mu)}
\end{equation*}
for all $f\in L^2(\mu)$ with mean zero, as it implies that $\lVert P_tf\rVert_{L^2(\mu)}\leq e^{-\nu t}\lVert f\rVert_{L^2(\mu)}$ for all $t\geq 0$. 
However, such a Poincaré inequality is no longer valid in the hypoelliptic, non-reversible setting. Instead, to obtain exponential decay of the transition semigroup of a possibly non-reversible Markov process, we introduce the \emph{flow Poincar\'e inequality} 
\begin{equation*}
    \int_0^T\lVert P_tf\rVert^2_{L^2(\mu)}\diff t\ \leq \ \frac1\nu\int_0^T\bigl\langle P_tf,-\mL P_t f\bigr\rangle_{L^2(\mu)}\diff t
\end{equation*}
for all $f\in L^2(\mu)$ with mean zero, where $T>0$ is a fixed time length. Such a flow Poincaré inequality amounts to a time-integrated Poincaré inequality along trajectories of the semigroup, and is equivalent to dissipation of time-averaged $L^2$-norms, as considered in \cite{albritton2024variational,cao23,eberle24}. In particular, it implies the existence of a constant $C>0$ such that
\begin{equation*}
    \lVert P_t f\rVert_{L^2(\mu)}\ \leq\ Ce^{-\nu t}\lVert f\rVert_{L^2(\mu)}\qquad\text{for all }t\geq 0
\end{equation*}
and all mean-zero square-integrable observables $f$. In \cite{DMS15,villani09}, a modified $L^2$-norm based on an additional mixing term is introduced to ensure coercivity and hence control of the dissipation. The strength of an approach based on the flow Poincaré inequality lies in the more intrinsic nature of the time-averaged $L^2$-norm as opposed to adding an artificial mixing term to the norm, allowing for sharper rates.
By exploiting the second-order lift property implicitly considered in \cite{albritton2024variational,lu22,cao23}, we prove such a flow Poincaré inequality in Theorem~\ref{thm:FPIlift}. The proof is based on a divergence lemma that only relies on the Dirichlet form of the underlying collapsed process, see Lemma~\ref{lem:divergence_lemma}. 
Previously, such divergence lemmas were only considered for the overdamped Langevin diffusion. Our result is more general and enables the treatment of non-reversible processes whose collapse is not an overdamped Langevin process. This is the case when incorporating non-trivial boundary conditions, as is often the case in non-equilibrium statistical mechanics.
Finally, our framework allows for a simple treatment of PDMPs with more general refreshment and jump mechanisms.

We demonstrate the versatility of our approach by applying it to two first examples. Firstly, we consider a pair of jamming run-and-tumble particles (RTPs), a model of active particles which maintain persistent movement, lying at the interface of out-of-equilibrium statistical mechanics and biology. Their speed of convergence to equilibrium is physically relevant, as it is linked to the time scale of the onset of out-of-equilibrium phenomena~\cite{vicsek12,elgeti15}, such as motility-induced phase separation~\cite{cates15}. The prevailing method in the statistical physics literature to address this question is the spectral approach~\cite{mallmin19,angelani19,malakar18}, leading to convergence speeds which are only valid asymptotically due to non-reversibility. Hence the value of the non-asymptotic total variation mixing time results~\cite{GHM24,hahn25}, extended to the $L^2$ relaxation time framework in the present article. Secondly, we consider piecewise deterministic Markov processes (PDMPs) used in sampling applications~\cite{MDS20,bierkens19}. Due to the degeneracy introduced through their definition on an enlarged position-velocity state space, the analysis of their convergence is delicate~\cite{BRZ19,durmus20,bierkens22,lu22}.

\medskip
The paper is divided into four more sections. In Section~\ref{sec:lift_collapse}, we begin by summarising and illustrating the concept of second-order lifts using overdamped Langevin diffusions as a simple example. We then focus on the more intricate case of a pair of jamming RTPs, demonstrating how this system can be understood as a lift of sticky Brownian motion. Then Section~\ref{sec:main_results} presents the main results on convergence to equilibrium of lifts via the flow Poincaré inequality.
Section~\ref{sec:rtp} applies our framework to a pair of RTPs with jamming.
Finally, Section~\ref{sec:pdmps} presents applications to PDMPs for sampling, namely the Forward process and variants of the Zig-Zag process.

\section{Second-order lifts}\label{sec:lift_collapse}

We begin by summarising the definition of second-order lifts and the related upper bound on the convergence rate and then illustrate the concept through lifts of overdamped Langevin diffusions. These include Langevin dynamics, randomised Hamiltonian Monte Carlo and many PDMPs introduced in sampling applications.  We subsequently introduce the system formed by a pair of jamming RTPs on the torus and show how it can be understood as a lift of a sticky Brownian motion. Such a system presents unique challenges within the second-order lift framework. Specifically, the presence of boundaries at which the process spends a non-zero amount of time complicates the identification of the underlying collapsed process. Additionally, the RTP velocity dynamics differ from previously considered mechanisms, requiring distinct proof techniques. The convergence of such systems will be explored in greater detail in Section~\ref{sec:rtp}.

\subsection{Definition and examples}

Consider a time-homogenous Markov process $(\hat Z_t)$ with invariant probability measure $\hat\mu$ on a Polish space $\Shat$. Let $\pi\colon\Shat\to\S$ be a measurable surjection onto another Polish space, and let $(Z_t)$ be a reversible diffusion process with state space $\S$ whose invariant probability measure is the pushforward $\mu = \hat\mu\circ\pi^{-1}$ of $\hat\mu$ under $\pi$. We denote their associated transition semigroups on $L^2(\hat\mu)$ and $L^2(\mu)$ by $(\hat P_t)$ and $(P_t)$, and the associated generators by $(\Lhat,\dom(\Lhat))$ and $(\mL,\dom(\mL))$, respectively. The Dirichlet form associated to $(Z_t)$ is the extension of 
\begin{equation*}
    \E(f,g)=-\int_\S f\mL g\diff\mu
\end{equation*}
to a closed symmetric bilinear form with domain $\dom(\E)$ given by the closure of $\dom(\mL)$ with respect to the norm $\|f\|_{L^2(\mu)} + \E(f,f)^{1/2}$. 

\begin{Def}[Second-order lifts \cite{eberle24}]\label{def:lift}
The process $(\hat Z_t)$ is a \emph{second-order lift} of $(Z_t)$ if there exists a core $C$ of $(\mL,\dom(\mL))$ such that
\begin{align}\label{eq:deflift0}
    f \circ \pi \in \dom(\Lhat) \text{ for all } f \in C
\end{align}
and for all $f,g\in C$ we have
\begin{equation}\label{eq:deflift1}
     \big< \Lhat (f \circ \pi), g \circ \pi \big>_{L^2(\hat\mu)} = 0,
\end{equation}
and
\begin{equation}\label{eq:deflift2}
    \frac{1}{2}\big< \Lhat (f \circ \pi), \Lhat(g \circ \pi) \big>_{L^2(\hat\mu)} = -\left< f, \mL g\right>_{L^2(\mu)} = \mathcal E(f, g).
\end{equation}
If in addition
\begin{align*}
    \{ f \in L^2(\mu)\colon f \circ \pi \in \text{\normalfont Dom}(\Lhat)\} = \dom(\E),
\end{align*}
then $(\hat Z_t)$ is called a \emph{strong second-order lift} of $(Z_t)$. Conversely, we refer to the process $(Z_t)$ as the \emph{collapse} of $(\hat Z_t)$.
\end{Def}

We also say that $(\Lhat,\dom(\Lhat))$ or $(\hat P_t)$ is a second-order lift of $(\mL,\dom(\mL))$ or $(P_t)$, respectively. In the rest of this paper, we simply refer to second-order lifts as lifts. Usually, $\Shat=\S\times\V$ will be a product space, $\pi(x,v)=x$ is the projection on $\S$, and we think of $(x,v)\in\Shat$ as a position and a velocity. More generally, $\S$ could for example be a Riemannian manifold and $\Shat$ the tangent bundle, see \cite{eberle24b}.

To get a better understanding of the definition of lifts, disintegrate the measure $\hat\mu$ as
\begin{equation*}
    \hat\mu(A) = \int_\S\kappa_x(A)\mu(\diff x)\qquad\text{for all measurable }A\subseteq\Shat,
\end{equation*}
i.e.\ $\hat\mu(\diff x\diff v) = \mu(\diff x)\kappa_x(\diff v)$ in case $\Shat=\S\times\V$.
Then, under minor technical assumptions, see \cite[Remark 2 (v)]{eberle24}, the first- and second-order conditions \eqref{eq:deflift1} and \eqref{eq:deflift2} are equivalent to the limit
\begin{equation*}
    \int_{\V} \hat P_t(g\circ\pi)(x,v)\kappa_x(\diff v)=(P_{t^2}g)(x)+o(t^2)
\end{equation*}
in $L^2(\mu)$ as $t$ goes to 0. Hence, for small values of $t$, the transition semigroup $(P_{t^2})$ of the collapse behaves approximately as an averaged version of the transition semigroup $(\hat P_t)$. Compared to lifts of Markov chains as introduced in \cite{chen99,diaconis00}, the interplay between the quadratic and linear timescales can be directly obtained in the continuous-time case.

By the spectral theorem, the rate of convergence to equilibrium in $L^2(\mu)$ of the reversible process $(Z_t)$ is accurately measured by the inverse of the spectral gap
\begin{equation*}
    \Gap(\mL) = \inf\bigl\{\Re(\alpha)\colon\alpha\in\mathrm{spec}(-\mL|_{L_0^2(\mu)})\bigr\}
\end{equation*}
of its generator $\mL$, where $L_0^2(\mu) = \{f\in L^2(\mu)\colon\int f\diff\mu=0\}$ denotes the subspace of mean-zero functions. This is no longer true for non-reversible Markov processes since their generators are not self-adjoint. Even in the exponentially ergodic case, the associated transition semigroup only satisfies 
\begin{equation}\label{eq:expdecay}
    \|\hat P_tf\|_{L^2(\hat\mu)}\le Ce^{-\nu t}\|f\|_{L^2(\hat\mu)}\qquad\text{for all }f\in L_0^2(\hat\mu)
\end{equation}
with $\nu\in[0,\infty)$ and some constant $C\in[1,\infty)$.
However, the rate of convergence to equilibrium of a lift can be bounded above by the square root of that of the collapsed process in the following way.
\begin{Thm}[\text{\cite[Theorem 11]{eberle24}}]\label{thm:upperbound}
    Suppose that $(\hat P_t)$ is a second-order lift of $(P_t)$. Assume that there exist constants $C\in[1,\infty)$ and $\nu\in[0,\infty)$ such that \eqref{eq:expdecay} is satisfied.
    Then 
    \begin{equation}\label{eq:rate}
        \nu\le (1+\log C){\sqrt{2\,\mathrm{Gap}(\mathcal L)}}.
    \end{equation}
\end{Thm}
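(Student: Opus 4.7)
The plan is to reduce the rate $\nu$ of the non-reversible lift to the spectral gap of the collapse by testing the exponential-decay bound \eqref{eq:expdecay} against a near-extremiser of the collapsed Dirichlet form that has been lifted to $\Shat$, with the two scales linked via identity \eqref{eq:deflift2}.

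First, by self-adjointness of $\mL$ on $L^2(\mu)$ and the Rayleigh--Ritz characterisation $\Gap(\mL) = \inf\{\E(f,f) : f \in \dom(\E),\ \int f\diff\mu = 0,\ \|f\|_{L^2(\mu)} = 1\}$, together with density of the core $C$ in $\dom(\E)$ with respect to $\|\cdot\|_{L^2(\mu)} + \E(\cdot,\cdot)^{1/2}$, I can pick for any $\epsilon > 0$ an $f \in C$ with zero mean, unit $L^2(\mu)$-norm and $\E(f,f) \le \Gap(\mL) + \epsilon$. Setting $F = f \circ \pi$, the identity $\hat\mu \circ \pi^{-1} = \mu$ gives $\|F\|_{L^2(\hat\mu)} = 1$ and $\int F\diff\hat\mu = 0$; \eqref{eq:deflift0} gives $F \in \dom(\Lhat)$; and \eqref{eq:deflift2} with $g = f$ gives $\|\Lhat F\|^2_{L^2(\hat\mu)} = 2\E(f,f) \le 2(\Gap(\mL) + \epsilon)$.

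Second, I would combine two estimates for $\hat P_tF$. From \eqref{eq:expdecay}, $\|\hat P_tF\|_{L^2(\hat\mu)} \le Ce^{-\nu t}$ for every $t \ge 0$. From $L^2(\hat\mu)$-contractivity of $(\hat P_s)_{s\ge 0}$ (invariance of $\hat\mu$) applied to the representation $F - \hat P_tF = -\int_0^t \hat P_s \Lhat F\diff s$, one gets $\|F - \hat P_tF\|_{L^2(\hat\mu)} \le t\|\Lhat F\|_{L^2(\hat\mu)} \le t\sqrt{2(\Gap(\mL) + \epsilon)}$. The reverse triangle inequality then yields, for every $t \ge 0$,
\[
    1 - Ce^{-\nu t} \le \|F\|_{L^2(\hat\mu)} - \|\hat P_tF\|_{L^2(\hat\mu)} \le t\sqrt{2(\Gap(\mL) + \epsilon)}.
\]

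Finally, on the interval $t \in (0, 1/\sqrt{2(\Gap(\mL)+\epsilon)})$ one may rearrange to obtain $\nu \le t^{-1}\log\!\big(C/(1 - t\sqrt{2(\Gap(\mL)+\epsilon)})\big)$. Minimising the right-hand side in $t$ and then letting $\epsilon \downarrow 0$ yields \eqref{eq:rate}. The main obstacle is this last optimisation: one must choose $t$ at the correct scale so that $Ce^{-\nu t}$ balances $1 - t\sqrt{2\Gap(\mL)}$, and it is precisely this balance that produces the $1 + \log C$ prefactor. Any fixed choice of $t$ (say $t\sqrt{2\Gap(\mL)} = 1/2$) yields only a bound of the same form but with strictly worse absolute constants. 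All earlier steps are essentially forced by the definitions of second-order lift, spectral gap and strongly continuous contraction semigroup.
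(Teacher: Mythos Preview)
The paper does not prove this result; it is quoted from \cite[Theorem~11]{eberle24}. Your overall strategy is the natural one and correctly yields a bound of the form $\nu\le K(C)\sqrt{2\,\Gap(\mL)}$, but the final optimisation step is wrong and does not produce the constant $1+\log C$. Writing $\alpha=\sqrt{2(\Gap(\mL)+\epsilon)}$ and $s=t\alpha\in(0,1)$, you need $\inf_{s}\phi(s)$ with $\phi(s)=s^{-1}\bigl(\log C-\log(1-s)\bigr)$. The first-order condition gives $\phi(s^{*})=1/(1-s^{*})=:y$ with $y-\log y=1+\log C$; since $y-\log y<y$ for $y>1$, this forces $y>1+\log C$ whenever $C>1$. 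Equivalently, $\phi(s)\le 1+\log C$ would require $(1-s)\log C\le s+\log(1-s)$, which is impossible on $(0,1)$ because the left-hand side is nonnegative while the right-hand side is strictly negative. Thus your argument only yields $\nu\le y(C)\sqrt{2\,\Gap(\mL)}$ with $y(C)>1+\log C$ (for instance $y(e)\approx 3.15$ versus $1+\log e=2$), a genuinely weaker conclusion than \eqref{eq:rate}.

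The earlier steps---approximating the spectral gap through the core, lifting via \eqref{eq:deflift0}--\eqref{eq:deflift2}, and the linear lower bound $\|\hat P_tF\|\ge 1-t\|\Lhat F\|$ from contractivity---are all fine. The gap is that comparing this single linear lower bound with $Ce^{-\nu t}$ at an optimised time cannot recover the sharp prefactor; your claim that ``it is precisely this balance that produces the $1+\log C$ prefactor'' is where the argument breaks. Obtaining exactly $1+\log C$ requires an additional idea beyond this one-function, one-time-point comparison, and this is where the proof in \cite{eberle24} departs from yours.
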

Theorem~\ref{thm:upperbound} provides a bound on the rate of convergence achievable through lifting in terms of the spectral gap of the collapsed process, generalising a related result \cite[Theorem 3.1]{chen99} in discrete time and space. This motivates the search for optimal lifts, in the sense that they satisfy \eqref{eq:expdecay} with $\nu$ of order $\sqrt{\Gap(\mL)}$ when $\Gap(\mL)$ is small. Proving such a statement requires quantitative lower bounds on the rate of convergence of non-reversible Markov processes. We show in Section~\ref{sec:main_results} how to obtain such bounds using the framework of lifts under general assumptions on the collapsed process and the structure of the lift. The result is demonstrated in several applications in Sections~\ref{sec:rtp} and \ref{sec:pdmps}.

\begin{Exa}[Lifts of overdamped Langevin diffusions]\label{ex:langevinlift}
    Prototypical examples of second-order lifts are given by lifts of overdamped Langevin diffusions. Let $\S=\R^d$ and consider a potential $U \in C^1(\mathbb R^d)$ such that $U(x) \rightarrow +\infty$ when $|x| \rightarrow +\infty$ and $\int e^{-U(x)} \diff x < +\infty$. 

    The \emph{overdamped Langevin diffusion} is the process $(Z_t)$ with state space $\S$ given by the stochastic differential equation 
    \begin{align*}
        \diff Z_t = -\frac12 \nabla U(Z_t) \diff t + \diff B_t,
    \end{align*}
    where $(B_t)_{t\geq 0}$ is a standard $d$-dimensional Brownian motion. Its unique invariant probability measure is the Boltzmann-Gibbs measure
    \begin{equation*}
        \mu(\diff x)\propto e^{-U(x)}\diff x,
    \end{equation*}
    and the associated generator in $L^2(\mu)$ is
    \begin{equation*}
        \mL f = -\frac{1}{2}\nabla U\cdot\nabla f + \frac12\Delta f = -\frac{1}{2}\nabla^*\nabla f
    \end{equation*}
    for all $f\in C_\comp^\infty(\R^d)$, which is a core of the generator \cite[Theorem 3.1]{Wielens1985Selfadjointness}. Here $\nabla^*$ is the adjoint of $\nabla$ in $L^2(\mu)$.
    
    Lifts of overdamped Langevin diffusions are now given by Markov processes on $\Shat=\S\times\V = \R^d\times\R^d$, see \cite[Example 3]{eberle24}. They include processes based on Hamiltonian dynamics, such as the Langevin diffusion or randomised Hamiltonian Monte Carlo (RHMC), but also other piecewise deterministic Markov processes (PDMPs) used in sampling applications, that we will treat in Section~\ref{sec:pdmps}. These processes have in common that their invariant probability measure is a product measure $\hat\mu = \mu\otimes\kappa$, where $\kappa$ is for example the $d$-dimensional standard normal distribution, the uniform distribution on the hypercube $\{-1,+1\}^d$, or the uniform distribution on the coordinate directions $\{\pm e_i\colon i=1,\dots,n\}$. For functions $f\in\dom(\mL)$, their generators take the form $\Lhat (f\circ\pi)(x,v) = v\cdot\nabla f(x)$, so that \eqref{eq:deflift1} and \eqref{eq:deflift2} can immediately be verified.
\end{Exa}

\subsection{Jamming run-and-tumble particles as a lift of sticky Brownian motion}\label{sec:rtp_introduction}

Run-and-tumble particles (RTPs), a close approximation for the movement of bacteria such as E.~coli~\cite{schnitzer93,berg04}, alternate between long straight runs and rapid reorientations. In addition to their use in biological modelling, they are a topic of investigation in statistical mechanics as they break time reversibility at the particle level and achieve persistent motion. This can drive them out of thermodynamic equilibrium and lead to interesting phenomena not found in passive systems, such as motility-induced phase separation~\cite{cates15} where the particles form clusters. Even a single RTP displays rich long-term behaviour not observed in passive systems, such as accumulation at boundaries~\cite{angelani17,malakar18,dhar19}. From a mathematical perspective, systems of RTPs can be framed as piecewise deterministic Markov processes~\cite{hahn23,GHM24}, for which many fundamental questions, such as the regularity of invariant measures, remain open.

\begin{figure}[H]
    \centering
	\includegraphics[width=5cm]{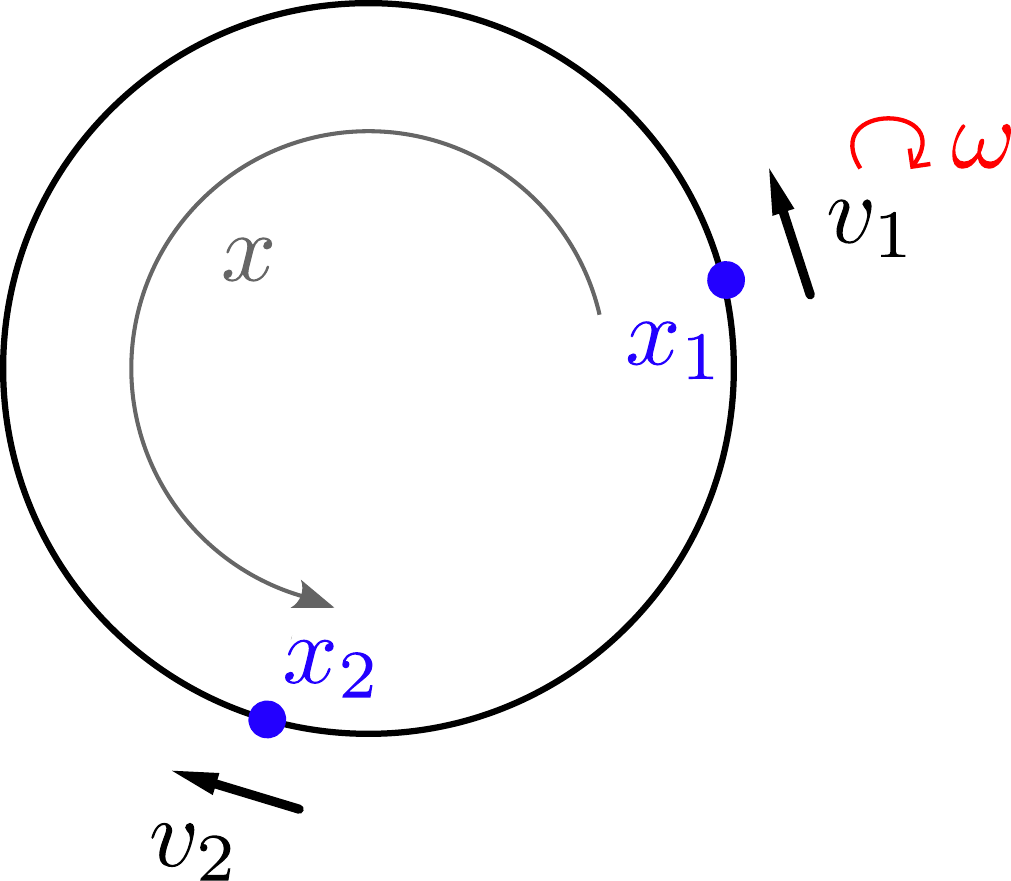}
    \caption{Two run-and-tumble particles on the 1D torus}
    \label{fig:two_RTPs_on_1D_torus}
\end{figure}

 Here, we focus on the two-particle case and start with an informal description of the model. Consider two point particles on the 1D torus of length $L$ described by their positions $x_1, x_2 \in \mathbb R / L\mathbb Z$ and their velocities $v_1, v_2 \in \{\pm 1\}$ (see Figure~\ref{fig:two_RTPs_on_1D_torus}). The velocities are independent Markov jump processes switching between $+1$ and $-1$ with rate $\omega > 0$, corresponding to the \emph{instantaneous} tumble mechanism. Between two velocity switches, each particle $x_i$ moves around the 1D torus with constant velocity $v_i$ until the two particles collide. Colliding particles jam and stop moving until a velocity flip allows them to separate.

A discrete version of this process was introduced in~\cite{slowman16} where jammed configurations were found to have increased mass under the invariant measure. This is a form of clustering hinting at motility-induced phase separation. A continuous-space version was later considered in~\cite{hahn23} for velocities following an arbitrary Markov jump process, revealing two universality classes for the invariant measure.

\begin{figure}[H]
    \centering
    \includegraphics[width=5cm]{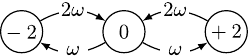}
    \caption{Jump rates of the relative velocity \( v = v_2 - v_1 \)}
    \label{fig:relative_velocity_rates}
\end{figure}

When modelling the two-particle system as a Markov process, it suffices to keep track of the separation from the first to the second particle $x \in \S =  [0, L]$ (see Figure~\ref{fig:two_RTPs_on_1D_torus}) and the relative velocity $v = v_2 - v_1 \in \V = \{-2, 0, 2\}$ (see Figure~\ref{fig:relative_velocity_rates}). Figure~\ref{fig:RTP_process_realization} shows a realisation of the process.

\begin{Def}[Jamming RTP process \cite{GHM24}] Let \( (x_0, v_0) \in [0, L] \times \{-2, 0, +2\} \) and $\omega\in(0,\infty)$ be given. Let the relative velocity $v(t)$ be a Markov jump process with initial state $v_0$ and the jump rates of Figure~\ref{fig:relative_velocity_rates}. Denote $0 = T_0 < T_1 < \cdots$ its jump times, i.e.\ $v(t)=v(T_n)$ for $t\in[T_n,T_{n+1})$. Recursively define the particle separation $x(t)$ by $x(0) = x_0$ and
$$
    x(t) = \max\left[0, \min\left[L, x(T_n) + v(T_n)(t - T_n)\right]\right]\quad\text{for } t \in [T_n, T_{n+1}).
$$
We call $X(t) = (x(t), v(t))$ the jamming RTP process on $[0,L]$ with parameter $\omega$. 
\end{Def}

\begin{figure}[H]
    \centering
    \includegraphics[width=10cm]{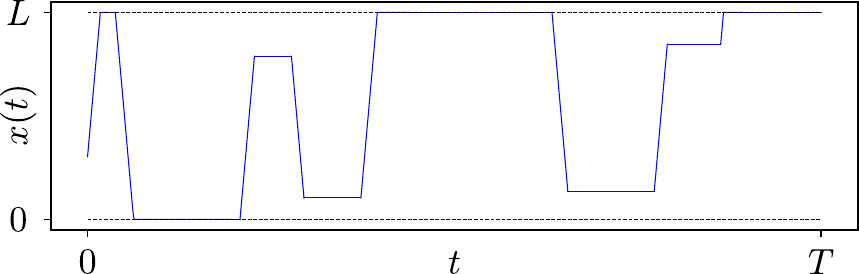}
    \caption{Realisation of the jamming RTP process}
    \label{fig:RTP_process_realization}
\end{figure}

Henceforth,  for conciseness, and when no confusion arises, we will occasionally refer to the jamming RTP process simply as the RTP process. The following theorem gives the invariant measure and the generator of the jamming RTP process. Its proof is relegated to Section~\ref{ssec:RTPdomainproof}.

\begin{Thm}[Invariant measure and generator of the jamming RTP process]\label{thm:invariant_measure_generator_RTP} 

Let $\omega,L\in(0,\infty)$.
\begin{enumerate}[(i)]
    \item (\cite{hahn23}) The jamming RTP process on $[0,L]$ with parameter $\omega$ takes its values in $\Shat = \S \times \V = [0, L] \times \{ -2, 0, +2\}$ and its unique invariant probability is given by
        $$
        \hat \mu = \sum_{v \in \mathcal V} \left( d^0_v \delta_0 + a_v \diff x + d^{L}_v \delta_{L} \right) \otimes \delta_v ,\qquad\text{where}
    $$
    \begin{align*}
    d^0_2 &= 0, & a_2 &= \frac{\omega/4}{2 + \omega {L}}, & d^{L}_2& = \frac{1/2}{2 + \omega {L}}, \\
d^0_0 &= \frac{1/2}{2 + \omega {L}}, & a_0 &= \frac{\omega/2}{2 + \omega {L}}, & d^{L}_0 &= \frac{1/2}{2 + \omega {L}}, \\
d^0_{-2} &= \frac{1/2}{2 + \omega {L}}, & a_{-2} &= \frac{\omega/4}{2 + \omega {L}}, & d^{L}_{-2} &= 0.
    \end{align*}
    \item The domain $\dom(\Lhat)$ of its $L^2(\hat\mu)$-generator $\Lhat$ is the set of functions $f \in L^2(\hat\mu)$ s.t.
    $$
        f(x, \pm 2) \in H^1(0, L), \qquad \lim_{x\to L} f(x, 2) = f(L, 2), \qquad \lim_{x\to 0} f(x, -2) = f(0, -2).
    $$
    For $f \in \dom(\Lhat)$ one has
    \begin{equation}\label{eq:RTPgenerator}
        \Lhat f(x, v) = v 1_{\{ 0 < x < L \}} \partial_x f(x, v) + \sum_{\tilde v \in \V} \lambda_{v\tilde v} f(x, \tilde v),
    \end{equation}
    where
	\[
		\begin{pmatrix}
			\lambda_{+2, +2} & \lambda_{+2, 0} & \lambda_{+2, -2} \\
			\lambda_{0, +2} & \lambda_{0, 0} & \lambda_{0, -2} \\
			\lambda_{-2, +2} & \lambda_{-2, 0} & \lambda_{-2, -2} \\
		\end{pmatrix}
		=
		\begin{pmatrix}
		-2\omega & 2\omega & 0 \\
		\omega & -2\omega & \omega \\
		0 & 2\omega & -2\omega
		\end{pmatrix}
	\]
	is the discrete generator of the relative velocity (see Figure~\ref{fig:relative_velocity_rates}).
\end{enumerate}
\end{Thm}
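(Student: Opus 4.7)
Part (i) is established in \cite{hahn23} via direct verification: one checks $\int \Lhat f\,\diff\hat\mu=0$ on a natural test class by integration by parts, with the boundary terms from the drift balancing the velocity-jump contributions at the atoms of $\hat\mu$, and derives uniqueness from irreducibility of $(X_t)$ on $\Shat$. I focus on part (ii). Define $D\subset L^2(\hat\mu)$ to be the space described in (ii) and let $A\colon D\to L^2(\hat\mu)$ be the formal operator given by the right-hand side of \eqref{eq:RTPgenerator}. The plan is to show $A=\Lhat$ in three steps.

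\emph{Step 1 (identification on a core).} Let $D_0\subset D$ consist of those $f$ with $f(\cdot,\pm 2)\in C^1([0,L])$. For $f\in D_0$ I apply the PDMP Dynkin formula to $f(X_t)$: between velocity jumps, $f(X_t)$ changes at rate $v\,\partial_x f(X_t)$ while $x_t\in(0,L)$ and is frozen while $x_t$ sits at a sticky boundary (producing the indicator $\mathbf{1}_{\{0<x<L\}}$), and at each velocity jump the compensated increment equals $\sum_{\tilde v}\lambda_{v\tilde v}f(x,\tilde v)$. The two continuity conditions ensure $f$ does not develop a spurious jump when the trajectory merges with the boundary, yielding $\Lhat f=Af$ on $D_0$. \emph{Step 2 (density and closedness).} Mollifying $f(\cdot,\pm 2)$ on $(0,L)$ while preserving the two required boundary traces produces a sequence in $D_0$ converging to any $f\in D$ in the graph norm of $A$; a standard argument for first-order differential operators shows $(A,D)$ is closed in $L^2(\hat\mu)$. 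Combined with Step 1 and the closedness of $\Lhat$, this gives $A\subseteq\Lhat$.

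\emph{Step 3 (surjectivity of the resolvent).} For fixed $\lambda>0$ and $g\in L^2(\hat\mu)$, I solve $(\lambda-A)f=g$. The $v=0$ line of the equation algebraically expresses $f(\cdot,0)$ in terms of $f(\cdot,\pm 2)$ and $g(\cdot,0)$; substituting gives a coupled first-order linear ODE system on $(0,L)$ in $f(\cdot,\pm 2)$, supplemented by four linear equations at $\{0,L\}\times\V$ for the boundary-atom values $f(0,0)$, $f(0,-2)$, $f(L,0)$, $f(L,+2)$, where the continuity conditions match the Sobolev traces of $f(\cdot,\pm 2)$. Standard ODE theory yields a unique solution in $D$, hence $(\lambda-A)D=L^2(\hat\mu)$. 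Since $A\subseteq\Lhat$ and $\lambda-\Lhat$ is bijective (as $\Lhat$ generates a $C_0$-semigroup by general PDMP theory), this forces $A=\Lhat$ and $D=\dom(\Lhat)$.

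The main obstacle is correctly identifying which boundary conditions belong to $D$. Continuity is imposed only at $(L,+2)$ and $(0,-2)$ because these are the only atoms of $\hat\mu$ that trajectories reach \emph{continuously} from the interior at the matching velocity; the atoms $(L,0)$ and $(0,0)$ are entered from a sticky state by a velocity flip, so $f$ may genuinely jump there, and no $H^1$-condition is imposed on $f(\cdot,0)$ since $v=0$ contributes no drift. Getting this asymmetry right is delicate: a symmetric prescription would either make the Dynkin calculation in Step 1 produce an ill-defined Dirac-type term at the non-sticky atoms, or over-determine the resolvent system in Step 3 and break the reverse inclusion $\dom(\Lhat)\subseteq D$. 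Verifying that the four boundary atoms decouple correctly between "continuous entry" and "jump entry" is the principal technical point of the proof.
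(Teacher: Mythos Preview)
Your approach is sound but genuinely different from the paper's. The paper does not touch the resolvent equation at all: it first identifies the $C^0$ (Feller) generator, whose domain carries the \emph{additional} Neumann-type conditions $\partial_x f(L,2)=\partial_x f(0,-2)=0$, and then shows directly that the $L^2$-generator is the closure of $(\Lhat_{C^0},\dom(\Lhat_{C^0}))$ and that this closure equals $(A,D)$. The reverse inclusion $D\subseteq\overline{\dom(\Lhat_{C^0})}$ is obtained by an explicit construction: given $f\in D$, one approximates $f(\cdot,2)$ in $H^1$ by smooth $h_n$ and then subtracts $\int_0^x h_n'\chi_n$ for a cutoff $\chi_n$ supported near $L$ to force the derivative to vanish at the endpoint while preserving $H^1$-convergence. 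Your route bypasses the $C^0$ layer entirely: you work with the extended generator on a \emph{larger} $C^1$ core $D_0$ (no derivative conditions), which makes your Step~2 mollification easier than the paper's, and you replace the explicit reverse-inclusion construction by the abstract resolvent surjectivity in Step~3. What the paper's approach buys is that nothing is left to ``standard ODE theory'': every approximation is written down. What yours buys is a cleaner logical structure and no need for the cutoff trick. One point you should make explicit in Step~3: the atom equations at $(0,0)$ and $(L,0)$ involve $f(0,2)$ and $f(L,-2)$, points carrying no $\hat\mu$-mass; these have to be read as the $H^1$ traces of $f(\cdot,\pm 2)$, and once you do so the linear system for the two ODE constants and the two free atom values $f(0,0),f(L,0)$ is square, with invertibility following from the injectivity of $\lambda-A$ already secured in Steps~1--2.
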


Regarding the second-order lift framework, this process differs from previously studied PDMPs and is interesting for two reasons. Firstly, the state space has boundaries and the process spends a non-zero amount of time on them. This is particularly interesting as the behaviour at boundaries usually manifests itself not in the formula for the generator but rather in its domain. Because second-order lifts are defined in terms of generators, even identifying the associated collapsed process becomes delicate. Secondly, different velocity processes necessitate different proofs, as was previously done when the velocity is an Ornstein–Uhlenbeck process \cite{cao23,eberle24b} and when the velocity is completely resampled at Poisson times \cite{lu22,eberle24}. The jamming RTP process's velocity, however, is a Markov jump process with the rates of Figure~\ref{fig:relative_velocity_rates}, requiring a new approach. 

In \cite{hahn23}, this particular jamming RTP process is shown to belong to a class of near-equilibrium systems, whose steady states differ from those of equilibrium diffusive particles only by the presence of positive mass on the boundaries. This suggests that the jamming RTP process can be understood as the lift of a sticky diffusion, whose invariant measure is the image measure
$$
    \mu = \hat\mu\circ\pi^{-1} = \frac1{2+\omega L} \delta_0 + \frac\omega{2+\omega L} \diff x + \frac1{2+\omega L} \delta_L
$$
under the projection $\pi(x, v) = x$.
An integration by parts argument further supports this interpretation by pointing to a diffusion, as
$$
    \int \Lhat(f \circ \pi) \Lhat(g \circ \pi) \diff\hat\mu = -\frac{2\omega}{2+\omega L} \int_0^L f'' g \diff x
$$
for all $f, g \in C^\infty([0, L])$ with $\mathrm{supp}(f), \mathrm{supp}(g) \subset (0, L)$.  These heuristic arguments are made rigorous in Theorem~\ref{thm:RTP_collapse}.

\begin{Def}[Sticky Brownian motion]\label{def:stickybm}
For $\omega\in(0,\infty)$, sticky Brownian motion on $[0,L]$ with parameter $\omega$ is the $[0, L]$-valued Feller process with generator
$$
    \mL_{C^0} f = f'' ,\quad
    \dom(\mL_{C^0}) = \{ f \in C^2([0, L])\colon f''(0) = \omega f'(0) \text{ and } f''(L) = -\omega f'(L)\}.
$$
\end{Def}

Conventionally, sticky Brownian motion is defined with $\mL_{C^0}f = \frac{1}{2}f''$, see \cite[Section 3.5.2]{liggett10}. The process defined in Definition~\ref{def:stickybm} would be a sticky Brownian motion accelerated by a factor $2$. However, leaving out the factor $\frac{1}{2}$ is more convenient for our purpose. In this setting, the tumble rate $\omega$ quantifies the \emph{stickiness} at the boundaries. The limits $\omega\to0$ and $\omega\to+\infty$ describe the asymptotic \emph{persistent} and \emph{diffusive} dynamical regimes \cite{hahn23,GHM24} and correspond here to absorption and reflection at the boundaries, respectively.

\begin{Thm}[Jamming RTP is a lift of sticky Brownian motion]\label{thm:RTP_collapse}Let $\omega,L\in(0,\infty)$.
\begin{enumerate}[(i)]
    \item Sticky Brownian motion on $[0,L]$ with parameter $\omega$ has a unique invariant probability measure given by
    $$
        \mu = \frac1{2+\omega L} \delta_0 + \frac\omega{2+\omega L} \diff x + \frac1{2+\omega L} \delta_L.
    $$
    \item Its $C^0$-generator $(\mL_{C^0}, \dom(\mL_{C^0}))$ is a core of its $L^2(\mu)$-generator $(\mL, \dom(\mL))$. 
    \item The jamming RTP process on $[0,L]$ with parameter $\omega$ is a lift of sticky Brownian motion on $[0,L]$ with parameter $\omega$.
\end{enumerate}
\end{Thm}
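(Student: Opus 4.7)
The plan is to verify parts (i)--(iii) in sequence; the bulk of the computational work lies in part (iii), where I check the defining conditions of a second-order lift.

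For part (i), invariance of $\mu$ follows from a direct calculation: for $f\in\dom(\mL_{C^0})$ one has
\begin{equation*}
    \int \mL_{C^0}f\,\diff\mu = \frac{1}{2+\omega L}\Bigl(f''(0)+\omega\int_0^L f''(x)\,\diff x + f''(L)\Bigr),
\end{equation*}
and the sticky boundary conditions $f''(0)=\omega f'(0)$, $f''(L)=-\omega f'(L)$, together with the fundamental theorem of calculus applied to the middle term, make all four contributions cancel pairwise. Uniqueness of the invariant probability follows from irreducibility of sticky Brownian motion on the compact interval $[0,L]$.

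For part (ii), the operator $(\mL_{C^0},\dom(\mL_{C^0}))$ generates a Feller semigroup $(P_t^{C^0})$ on $C([0,L])$; by part~(i) and density of $C([0,L])$ in $L^2(\mu)$, it extends to a strongly continuous contraction semigroup on $L^2(\mu)$ whose generator $(\mL,\dom(\mL))$ extends $\mL_{C^0}$. Since $\dom(\mL_{C^0})$ is invariant under $(P_t^{C^0})$ and dense in $L^2(\mu)$, a standard core criterion for Feller semigroups finishes the argument.

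For part (iii) I take $C = \dom(\mL_{C^0})$ as the core of $\mL$. For $f\in C$, the function $f\circ\pi$ is constant in $v$ with $f\in C^2([0,L])\subset H^1(0,L)$, and the one-sided limit conditions from Theorem~\ref{thm:invariant_measure_generator_RTP}(ii) hold by continuity of $f$, so $f\circ\pi\in\dom(\Lhat)$. The key observation is that, since the rows of the jump matrix $(\lambda_{v\tilde v})$ sum to zero, the jump part of \eqref{eq:RTPgenerator} annihilates $f\circ\pi$, leaving
\begin{equation*}
    \Lhat(f\circ\pi)(x,v) = v\,\mathbf{1}_{\{0<x<L\}}\,f'(x).
\end{equation*}
Because of the indicator, the boundary atoms of $\hat\mu$ do not contribute to inner products, so \eqref{eq:deflift1} reduces to $\bigl(\sum_v v\,a_v\bigr)\int_0^L f'g\,\diff x=0$, which vanishes by the symmetry $a_{+2}=a_{-2}$. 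For \eqref{eq:deflift2}, computing $\sum_v v^2 a_v = 2\omega/(2+\omega L)$ gives
\begin{equation*}
    \tfrac{1}{2}\langle \Lhat(f\circ\pi),\Lhat(g\circ\pi)\rangle_{L^2(\hat\mu)} = \tfrac{\omega}{2+\omega L}\int_0^L f'g'\,\diff x,
\end{equation*}
and integrating by parts against $g\in\dom(\mL_{C^0})$ using the sticky boundary conditions on $g$ shows that $-\langle f,\mL g\rangle_{L^2(\mu)}$ produces exactly this expression, the pointwise contributions at $0$ and $L$ cancelling those from the integration by parts. The main subtlety, already flagged in the paragraph preceding the theorem, is conceptual: one must correctly identify the collapsed process in the presence of boundary atoms, since the sticky behaviour is encoded in $\dom(\mL)$ rather than in the formula for $\mL$. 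Once sticky Brownian motion is the right guess, the verification is a clean bookkeeping of constants.
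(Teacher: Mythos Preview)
Your proof is correct and follows essentially the same route as the paper: both reduce part (iii) to the formula $\Lhat(f\circ\pi)(x,v)=v\,\mathbf{1}_{\{0<x<L\}}f'(x)$, verify \eqref{eq:deflift1} by the oddness of $v\mapsto v$ under $\kappa_x$, and obtain \eqref{eq:deflift2} via the identity $\tfrac{\omega}{2+\omega L}\int_0^L f'g'\,\diff x=-\langle f,\mL g\rangle_{L^2(\mu)}$ using an integration by parts together with the sticky boundary conditions. The only cosmetic difference is that the paper integrates by parts so as to use the boundary conditions on $f$ rather than on $g$, which is immaterial by symmetry of the Dirichlet form.
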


\begin{proof} (i) Follows from the generator characterisation of invariance~\cite[Theorem 3.37]{liggett10}.

(ii) The set $\dom(\mL_{C^0})$ is dense in $L^2(\mu)$ and invariant by the $L^2$-semigroup. Hence the claim follows from~\cite[Proposition II.1.7]{engel99}.

(iii) For $f \in \dom(\mL_{C^0})$ one has $f \circ \pi \in \dom(\Lhat)$ and 
$$
\Lhat(f\circ\pi)(x, v) = v1_{\{0 < x < L\}} f'(x).
$$
Hence, for $f, g \in \dom(\mL_{C^0})$, integrating first with respect to $v$ and then with respect to $x$ yields
\begin{align*}
    \big< \Lhat (f \circ \pi), g \circ \pi\big>_{L^2(\hat\mu)}
    &= \frac{\omega}{2 + \omega L} \int_0^L g(x) \left( \frac14 (2f'(x)) + \frac12 (0f'(x)) + \frac14 (-2f'(x)) \right) \diff x = 0.
\end{align*}
Again integrating first with respect to $v$ and then $x$ yields
\begin{align*}
    \frac12\big< \Lhat(f \circ \pi), \Lhat(g\circ \pi)\big>_{L^2(\hat\mu)}
    &= \frac\omega{2+\omega L} \int_0^L f'(x) g'(x) \diff x \\
    &= \frac\omega{2+\omega L} \left( f'(L)g(L) - f'(0)g(0) - \int_0^L f''g \diff x\right) \\
    &= \frac\omega{2+\omega L} \left( -\frac1\omega f''(L)g(L) - \frac1\omega f''(0)g(0) - \int_0^L f'' g \diff x \right) \\
    &= \int_{[0,L]} (-\mL f) g \diff\mu.\qedhere
\end{align*}
\end{proof}

\section{Convergence to equilibrium}\label{sec:main_results}

This section presents the variational approach to hypocoercivity using a flow Poincaré inequality and second-order lifts, extending the results in~\cite{albritton2024variational,cao23,lu22,eberle24} and unifying a broad class of processes, including several examples from the sampling and statistical mechanics literature.
The key idea is to characterise the collapsed process solely through its Dirichlet form.

\subsection{Convergence via flow Poincaré inequality}

For the transition semigroup $(P_t)_{t\geq 0}$ of a potentially non-reversible Markov process with invariant probability measure $\mu$ and generator $(\mL,\dom(\mL))$ in $L^2(\mu)$, as explained in Section~\ref{sec:lift_collapse}, the spectral gap or Poincar\'e inequality does not suffice in general to characterise the rate of convergence to equilibrium. In particular, the process may not satisfy a Poincar\'e inequality at all yet still converge exponentially fast. However, the following flow Poincar\'e inequality, amounting to a Poincar\'e inequality time-integrated over a fixed time length $T\in(0,\infty)$ along trajectories of the semigroup, is equivalent to exponential decay of the  $L^2(\mu)$-norm averaged over time $T$ along trajectories of the semigroup.

\begin{Def}[Flow Poincar\'e inequality]\label{def:FPI}
    The semigroup $(P_t)_{t\geq 0}$ satisfies a \emph{flow Poincar\'e inequality} with constant $\nu^{-1}\in(0,\infty)$ and duration $T\in(0,\infty)$ if 
    \begin{equation}
        \int_0^T\lVert P_t f\rVert_{L^2(\mu)}^2\diff t\ \leq\ \frac{1}{\nu}\int_0^T\langle P_tf,-\mL P_t f\rangle_{L^2(\mu)}\diff t
    \end{equation}
    for all $f\in L_0^2(\mu)\cap\dom(\mL)$.
\end{Def}

In fact, defining the inner product  $\langle f,g\rangle_T = \int_0^T\langle P_t f, P_tg\rangle_{L^2(\mu)}\diff t$
on $L^2(\mu)$, the flow Poincar\'e inequality is equivalent to
\begin{equation*}
    \langle f,-\mL f\rangle_T\geq \nu \langle f,f\rangle_T\qquad\text{for all }f\in L_0^2(\mu)\cap\dom(\mL).
\end{equation*}
Similarly to the reversible case, in which the Poincar\'e inequality is equivalent to the decay of the $L^2$-norm, the flow Poincaré inequality is equivalent to decay of time-averaged $L^2$-norms.

\begin{Thm}[Exponential decay]\label{thm:decay}
    The following assertions are equivalent:
    \begin{itemize}
        \item[(i)]  The semigroup $(P_t)_{t\geq 0}$ satisfies a flow Poincaré inequality with constant $\nu^{-1}$ and duration $T$.
        \item[(ii)] For all $f\in L_0^2(\mu)$ and $t\geq 0$ one has
    \begin{equation}\label{eq:decay}
        \int_t^{t+T}\lVert P_s f\rVert_{L^2(\mu)}^2\diff s\  \leq \ e^{-2\nu t}\int_0^T\lVert P_s f\rVert_{L^2(\mu)}^2\diff s.
    \end{equation}
    \end{itemize}
    Furthermore, both (i) and (ii) imply
    $$
        \| P_tf\|_{L^2(\mu)}\ \le\ Ce^{-\nu t} \| f\|_{L^2(\mu)}\qquad\text{with }C=e^{\nu T}
    $$
    for all $f\in L_0^2(\mu)$ and $t\geq 0$.
\end{Thm}

The proof of the decay estimate in Theorem~\ref{thm:decay} using variants of a Poincaré-type inequality in time and space is well-established, see e.g.\ \cite{albritton2024variational,cao23,eberle24},
whereas the converse relies on our formulation of the flow Poincar\'e inequality.

\begin{proof}[Proof of Theorem~\ref{thm:decay}]
    Assume that a flow Poincar\'e inequality with constant $\nu^{-1}$ and duration $T$ holds and let $f \in L^2_0(\mu)\cap \dom(\mL)$. Then
    \begin{align*}
        \MoveEqLeft\frac{\diff}{\diff t} \int_t^{t+T} \Vert  P_s f \Vert^2_{L^2(\mu)} \diff s = \Vert  P_{t+T} f \Vert_{L^2(\mu)}^2 - \Vert  P_t f \Vert_{L^2(\mu)}^2 \\
        &= \int_t^{t + T} \frac{\diff}{\diff s} \Vert  P_s f \Vert^2_{L^2(\mu)} \diff s = 2 \int_t^{t+T} \big\langle  P_s f, \mL  P_s f\big\rangle_{L^2(\mu)} \diff s.
    \end{align*}
    By the semigroup property, the flow Poincaré inequality implies
    \begin{equation*}
        \int_0^T\langle P_{t+s}f,\mL P_{t+s}f\rangle_{L^2(\mu)}\diff s\leq-\nu\int_0^T\lVert P_{t+s}f\rVert_{L^2(\mu)}^2\diff s
    \end{equation*}
    so that
    $$
    \frac{\diff}{\diff t} \int_t^{t+T} \Vert  P_s f \Vert^2_{L^2(\mu)} \diff s \le -2\nu \int_t^{t+T} \Vert  P_s f \Vert^2_{L^2(\mu)} \diff s. 
    $$
    Grönwall's inequality yields
    $$
    \int_t^{t+T} \Vert  P_s f \Vert^2_{L^2(\mu)} \diff s \le e^{-2\nu t} \int_0^{T} \Vert  P_s f \Vert^2_{L^2(\mu)} \diff s.
    $$
    The claim then follows since $\dom(\mL)$ is dense in $L^2(\mu)$.

    Conversely, assume that \eqref{eq:decay} holds and let $f\in L_0^2(\mu)\cap \dom(\mL)$. Then
    \begin{align*}
        \MoveEqLeft\int_0^T\langle P_tf,-\mL P_t f\rangle_{L^2(\mu)}\diff t = \lim_{h\to 0}\frac{1}{h}\int_0^T\langle P_t f, P_t f -  P_{t+h}f\rangle_{L^2(\mu)}\diff t\\
        &\geq\lim_{h\to 0}\frac{1}{h}(1-e^{-\nu h})\int_0^T\lVert  P_t f\rVert_{L^2(\mu)}^2\diff t = \nu\int_0^T\lVert  P_t f\rVert_{L^2(\mu)}^2\diff t,
    \end{align*}
    which yields the claim.

    Finally, if \eqref{eq:decay} holds, then for $t\geq T$,
    \begin{equation*}
        \lVert P_t f\rVert_{L^2(\mu)}^2\leq\frac{1}{T}\int_{t-T}^t\lVert P_sf\rVert_{L^2(\mu)}^2\diff s\leq e^{-2\nu(t-T)}\frac{1}{T}\int_0^T\lVert P_s f\rVert_{L^2(\mu)}^2\diff s \leq e^{-2\nu (t-T)}\lVert f\rVert_{L^2(\mu)}^2,
    \end{equation*}
    which yields the claim. For $t<T$, the claim is trivially satisfied since $e^{-\nu (t-T)}\geq 1$.
\end{proof}

\begin{Rem}
    A Poincar\'e inequality in a suitable extended space-time sense was first introduced in \cite{albritton2024variational} to obtain exponential decay for the kinetic Fokker-Planck equation, and later applied to study the convergence rate of Langevin dynamics~\cite{cao23} and some piecewise deterministic Markov processes~\cite{lu22}. The flow Poincaré inequality in Definition~\ref{def:FPI} differs from the space-time Poincaré inequality as in \cite{albritton2024variational,cao23,lu22}, as it is only valid along trajectories $t\mapsto  P_tf$ of the semigroup. However, this is not a restriction, since it is specifically intended for application to solutions of the corresponding Kolmogorov backward equation, as in the proof of Theorem~\ref{thm:decay}. More importantly, this formulation allows for its extension to a broad class of refreshment mechanism, which we will make use of in Section~\ref{sec:rtp}, and avoids the use of the $H^{-1}$-norm in the velocity that is present in the formulation of a space-time Poincaré inequality as in \cite{albritton2024variational,cao23}.
\end{Rem}

\subsection{Flow Poincaré inequality for second-order lifts}\label{ssec:FPIlift}

We now demonstrate how to prove flow Poincar\'e inequalities for non-reversible Markov processes using the framework of second-order lifts.
As in Section~\ref{sec:lift_collapse}, we consider a time-homogenous Markov process with state space $\Shat = \S\times\V$, where $\S\subseteq\R^d$, and invariant probability measure $\hat\mu$ and a reversible diffusion on $\S$ with invariant measure $\mu = \hat\mu\circ\pi^{-1}$, where $\pi(x,v)=x$ is the projection from $\Shat$ to $\S$. The associated transition semigroups acting on $L^2(\hat\mu)$ and $L^2(\mu)$ and their generators are denoted by $(\hat P_t)$ and $(P_t)$, as well as $(\Lhat,\dom(\Lhat))$ and $(\mL,\dom(\mL))$, respectively. 
Disintegrating $\hat\mu(\diff x\diff v) = \mu(\diff x)\kappa_x(\diff v)$ yields the conditional expectation
\begin{equation*}
    \Pi_v f(x, v) = \int_\V f(x, w) \,\kappa_x(\diff w).
\end{equation*}
We split the generator of $(\hat P_t)$ into 
\begin{equation}\label{eq:Lhatsplitting}
    \Lhat f \ =\ \Ld f + \gamma\Lv f\qquad\text{for all }f\in\dom(\Lhat),
\end{equation}
where $\gamma>0$, and  $(\Ld,\dom(\Ld))$ and $(\Lv,\dom(\Lv))$ are two linear operators on $L^2(\hat\mu)$ such that $\dom(\Lhat)\subseteq\dom(\Ld)\cap\dom(\Lv)$.
The operators $\Ld$ and $\Lv$ are usually interpreted as transport and refreshment terms, respectively. We denote by 
$$\E_v(f,g) = \langle f,-\Lv g\rangle_{L^2(\hat\mu)}$$
the velocity Dirichlet form and set $\E_v(f) = \E_v(f,f)$.

\begin{Asm}\label{asm:A}
    The operator $(\Ld,\dom(\Ld))$ is a lift of $(\mL,\dom(\mL))$ that is weakly antisymmetric, i.e.\ there exists a core $C$ of $\mL$ such that 
    \begin{align*}
        f\circ\pi\in\dom(\Ld^*)\qquad\text{and}\qquad\Ld^*(f \circ \pi) = -\Ld (f \circ \pi)
    \end{align*}
    for all $f\in C$.
\end{Asm}

\begin{Asm}\label{asm:B}
     The operator $(\mL,\dom(\mL))$ has purely discrete spectrum on $L^2(\mu)$ and satisfies a Poincaré inequality with constant $\frac{1}{m}$, i.e.\
\begin{align}\label{eq:xPoincare}
    \Vert f - \mu(f)\Vert_{L^2(\mu)}^2\ \le\ \frac1m \mathcal E(f)
\end{align}
for all $f \in \dom(\mathcal E)$.
\end{Asm}
\begin{Asm}\label{asm:C}
\begin{enumerate}[(i)]
    \item There exist constants $C_1,C_2 > 0$ such that
    \begin{equation}\label{eq:C1}
        \big|\big< \Ld (g  \circ \pi), \Ld (f - \Pi_v f)\big>_{L^2(\hat\mu)}\big|\ \le\ C_1\, \Vert 2 \mathcal L g\Vert_{L^2(\mu)}  (\E_v(f))^{1/2},\qquad \tag{C1}
    \end{equation}
    and 
    \begin{equation}\label{eq:C2}
            \big|\big<\Ld (g \circ \pi),\Lv f \big>_{L^2(\hat\mu)}\big| \ \le\ C_2\, \Vert 2 \mathcal L g\Vert_{L^2(\mu)}  (\E_v(f))^{1/2}, \tag{C2}
    \end{equation}
    for all functions $f\in\dom(\Lhat)$ with $\Pi_v f \in \dom(\Ld)$ and $g$ in a core of $\mL$.
    
    \item The operator $\Lv$ leaves $\kappa_x$ invariant, i.e.\
    \begin{equation*}
        \int_\V \Lv f(x, v) \diff \kappa_x(v) = 0\qquad\text{for all } x \in \mathcal S\text{ and }f\in\dom(\Lhat),
    \end{equation*}
    and there exists a constant $m_v>0$ such that 
    \begin{equation}\label{eq:mvassu}
        \int_0^T\Vert \hat P_tf - \Pi_v\hat P_tf \Vert_{L^2(\hat\mu)}^2\diff t\ \le\ \LvPoincareConstant \int_0^T\mathcal E_v(\hat P_tf)\diff t
    \end{equation}
    for all $f\in\dom(\Lhat)$.
\end{enumerate}

\end{Asm}

Let us comment on these assumptions before turning to the main result in Theorem~\ref{thm:FPIlift}.

\begin{Rem}\label{rem:assumptions}
    \begin{enumerate}[(i)]
        \item 
        A splitting as in \eqref{eq:Lhatsplitting} into two distinct dynamics, interpreted as a transport and velocity refreshment, is natural in many examples. For instance, in the case of Langevin dynamics or randomised Hamiltonian Monte Carlo, one chooses the transport operator
        \begin{equation*}
            \Ld f(x,v) = v\cdot\nabla_xf(x,v) - \nabla U(x)\cdot\nabla_vf(x,v)
        \end{equation*}
        to be the generator of the Hamiltonian flow associated to the Hamiltonian $H(x,v) = U(x)+\frac{1}{2}|v|^2$. The velocity refreshment operator $\Lv$ could be the full refreshment $\Lv = \Pi_v-I$ with respect to the conditional distribution $\kappa_x$ but much more general mechanisms are possible, as discussed in Section~\ref{sec:rtp}. Regarding the jamming RTP process, the transport and velocity refreshments terms correspond to the run-and-tumble mechanisms, respectively, with the latter differing from a complete refreshment.

        \item \label{rem:Ld_is_also_a_lift}
        In Assumption~\ref{asm:A}, if $\Lv$ only acts on $v$ in the sense that there exists a core $C$ of $\mL$ such that $\Lv(f \circ \pi) = 0$ for all $f\in C$, then $\Lhat$ is a lift of $\mL$ if and only if $\Ld$ is a lift of $\mL$.

        \item 
        The Assumption~\ref{asm:B} of a purely discrete spectrum, which was also considered in \cite{cao23,eberle24}, allows for simple calculations using a spectral decomposition that enables us to prove a divergence lemma not limited to the generator of an overdamped Langevin diffusion. This technical assumption can be relaxed, see \cite{brigati23}.
        The Poincaré inequality \eqref{eq:xPoincare} is equivalent to existence of a spectral gap $m=\Gap(\mL)$ and to exponential decay of the semigroup $(P_t)$ generated by $\mL$ with rate $m$. It thus allows us to directly compare the decay of $(\hat P_t)$ to that of $(P_t)$. In the absence of a spectral gap, Assumption~\ref{asm:A} can however also be relaxed to a weak Poincaré inequality, yielding subexponential convergence to equilibrium, see \cite{bsww24}.

        \item \label{rem:C1_with_transpose} 
        Assumption~\eqref{eq:C1} can often be verified in the following way in applications. If $\Ld (g \circ \pi) \in \dom(\Ld^*)$ for all $g$ in  a core of $\mL$, the second order lift condition becomes
        \begin{align*}
        \Pi_v \Ld^*\Ld (g \circ \pi) & = -(2 \mathcal L g) \circ \pi ,\qquad\text{
        and thus}\\
           \big|\big< \Ld (g  \circ \pi), \Ld (f - \Pi_v f)\big>_{L^2(\hat\mu)} \big|
            &\le \Vert (I - \Pi_v)\Ld^* \Ld (g  \circ \pi) \Vert_{L^2(\hat\mu)} \Vert f - \Pi_v f\Vert_{L^2(\hat\mu)}\\
            &\le \LvInvPoincareConstant^{-1/2}\Vert \Ld^* \Ld (g  \circ \pi) + (2 \mathcal L g) \circ \pi \Vert_{L^2(\hat\mu)} (\E_v(f))^{1/2}
        \end{align*}
        Hence \eqref{eq:C1} is satisfied if
        \begin{align*}
            \Vert \Ld^* \Ld (g  \circ \pi) + (2 \mathcal L g) \circ \pi \Vert_{L^2(\hat\mu)}\le C_1m_v^{1/2} \Vert 2 \mathcal L g\Vert_{L^2(\mu)},
        \end{align*}
        i.e.\ if the $L^2(\mu)$-norm of the standard deviation of $\Ld^*\Ld (g\circ\pi)$ with respect to $\kappa_x$ is bounded by the $L^2(\mu)$-norm of its expectation $-2\mL g$, since then .
        \item \label{rem:assumption_C_full_refresh}
        Assumption \eqref{eq:C2} can be verified immediately if $\Lv$ is a bounded operator, since then there exists a constant $c>0$ such that $\lVert \Lv f\rVert_{L^2(\hat\mu)}\leq c(\E_v(f))^{1/2}$.       
        By the Cauchy-Schwarz inequality and the second-order lift condition \eqref{eq:deflift2},
        \begin{equation*}
            \big|\big\langle\Ld(g\circ\pi),\Lv f\big\rangle_{L^2(\hat\mu)}\big| \leq c (2\E(g))^{1/2}(\E_v(f))^{1/2} \leq \frac{c}{\sqrt{2m}}\lVert 2\mL g\rVert_{L^2(\mu)}(\E_v(f))^{1/2},
        \end{equation*}
        so that \eqref{eq:C2} holds with $C_2 = \frac{c}{\sqrt{2m}}$. In particular, if $\Lv = \Pi_v - I$ is the full refreshment with respect to the conditional distribution $\kappa_x(\diff v)$, Assumptions \eqref{eq:C2} and \eqref{eq:mvassu} hold with $C_2  = \frac{1}{\sqrt{2m}}$ and $m_v=1$.

        \item 
        In Assumption~\ref{asm:C}, \eqref{eq:mvassu} resembles a flow Poincar\'e inequality where the left-hand side is only centred in the velocity variable, and the right-hand side uses the velocity Dirichlet form.
        The inequality in particular holds if the refreshment term satisfies a Poincaré inequality in $v$, i.e.\ if
        \begin{equation*}
            \lVert f-\Pi_vf\rVert_{L^2(\hat\mu)}^2 \leq \frac{1}{m_v}\E_v(f)
        \end{equation*}
        for all $f\in\dom(\Lhat)$. This is usually the case, as, in line with the general principles of hypocoercivity, the dynamics in $v$ has strong relaxation properties that are then propagated to the variable $x$ by the rest of the dynamics. Nonetheless, the weaker time-integrated version \eqref{eq:mvassu} may be of use in applications where the refreshment term is degenerate.

        \item Assumptions~\ref{asm:A}, \ref{asm:B} and \ref{asm:C}(ii) are closely related to Assumptions (H1), (H2), and (H3) in the seminal work \cite{DMS15}, while Assumption~\ref{asm:C}(i) ensuring good interaction between $\Ld$ and $\Lv$ is reminiscent of Assumption (H4). The connection between second-order lifts and the approach to hypocoercivity in \cite{DMS15} is detailed in \cite{BLW24}.
    \end{enumerate}
\end{Rem}

We can now establish the following flow Poincar\'e inequality.

\begin{Thm}[Flow Poincaré inequality for lifts]\label{thm:FPIlift}
    Let Assumptions~\ref{asm:A}, \ref{asm:B} and \ref{asm:C} hold. Then there exists a universal constant $C > 0$ such that for any $T\in (0,\infty)$ and $f\in \dom(\Lhat)\cap L_0^2(\hat\mu)$,
    \begin{align*}
        \int_0^T\lVert\hat P_t f\rVert_{L^2(\hat\mu)}^2\diff t\ \le\ C\Big(\gamma^2C_2^2+C_1^2 + \LvPoincareConstant \Big(1 + \frac1{mT^2}\Big)\Big) \int_0^T\E_v(\hat P_t f)\diff t.
    \end{align*}
    In particular, $(\hat P_t)_{t\geq 0}$ satisfies a flow Poincar\'e inequality with duration $T$ and constant
    \begin{equation}\label{eq:nu}
        \frac{1}{\nu} = C\gamma^{-1}\Big(\gamma^2C_2^2+C_1^2 + \LvPoincareConstant \Big(1 + \frac1{mT^2}\Big)\Big).
    \end{equation}
\end{Thm}

We defer the proof to Section~\ref{ssec:FPIproof}.
The constant $C$ in Theorem~\ref{thm:FPIlift} does not depend on any parameters and can easily be made explicit.
By Theorem~\ref{thm:decay}, Theorem~\ref{thm:FPIlift} immediately yields the following corresponding decay estimate. 

\begin{corollary}\label{coro:decaylift}
    Let Assumptions~\ref{asm:A}, \ref{asm:B} and \ref{asm:C} hold. Then the semigroup $(\hat P_t)$ generated by $(\Lhat,\dom(\Lhat))$ satisfies
    \begin{equation*}
        \lVert\hat P_tf\lVert_{L^2(\hat\mu)}\ \leq\ Ce^{-\nu t}\lVert f\rVert_{L^2(\hat\mu)}\qquad\text{for all }t\geq 0
    \end{equation*}
    and $f\in L_0^2(\hat\mu)$, where the rate $\nu$ is given by \eqref{eq:nu} and $C=e^{\nu T}$.
\end{corollary}

One of the main tools in the proof of a flow Poincaré inequality using the framework of second-order lifts is a so-called divergence lemma, see Section~\ref{ssec:divergencelemma}. There are now several different works \cite{albritton2024variational,cao23,brigati23,eberle24b} which aim at providing such a divergence lemma under general hypotheses. The Dirichlet form approach taken here enables general assumptions on the collapsed process, imposing only the Assumption~\ref{asm:B} of a spectral gap and discrete spectrum.

\begin{Rem}\label{rem:decay_rate_in_simple_setting}
    To obtain an explicit rate in Corollary~\ref{coro:decaylift} from the expression \eqref{eq:nu}, one can choose $T=m^{-1/2}$. 
    If $m_v^{-1} = O(1)$ and $C_2 = O(m^{-1/2})$, as is the case if $\Lv = \Pi_v-I$ is the full refreshment with respect to $\kappa_x$ by Remark~\ref{rem:assumptions}\eqref{rem:assumption_C_full_refresh}, this yields 
    $$
        \nu = \Omega\bigg(\frac{\gamma m}{\gamma^2 + \left(1 + C_1^2\right)m}\bigg).
    $$
    Optimising by choosing $\gamma = (1+C_1)\sqrt{m}$ yields $\nu = \Omega\big({\sqrt m}/ (1+C_1)\big)$. 
    For many examples, including Langevin dynamics, randomised HMC, and the RTP process considered here, one can choose $C_1$ of order $1$, see Section~\ref{sec:rtp}. The expression then further simplifies to
    \begin{align*}
        \nu = \Omega\bigg(\frac{\gamma m}{\gamma^2 + m}\bigg),
    \end{align*}
    and if $\gamma = \Theta (\sqrt{m})$ then $\nu = \Omega(\sqrt{m})$, i.e.\ we obtain a square-root speed-up with the optimal choice of $\gamma$.
\end{Rem}

The usual example of Langevin dynamics (e.g.\ \cite{cao23,eberle24}) serves as a useful base case in showcasing the arguments involved in applying Theorem~\ref{thm:FPIlift}.

\begin{Exa}[Langevin dynamics]\label{ex:langevin}
    Consider a probability measure $\mu$ on $\R^d$ with density proportional to $\exp(-U)$, where $U\colon\R^d\to\R$ is such that $\int_{\R^d}e^{-U(x)}\diff x<\infty$ and $\lim_{|x| \rightarrow +\infty} U(x) = +\infty$. Let us assume that $\mu$ satisfies a Poincar\'e inequality with constant $\frac{1}{m}$,
    and that the generator $(\mL,\dom(\mL))$ of the associated overdamped Langevin diffusion, see Example~\ref{ex:langevinlift}, has discrete spectrum. Furthermore, let the potential $U$ satisfy the lower curvature bound
    \begin{equation}\label{eq:curvaturebound}
        \nabla^2U(x)\geq -K m\cdot I_d\qquad\text{for all }x\in\R^d
    \end{equation}
    for some $K\geq 0$.
    Langevin dynamics is the $\R^d\times\R^d$-valued diffusion process $(X_t,V_t)$ solving the stochastic differential equation
    \begin{align*}
        \diff X_t&=V_t\diff t\,,\\
        \diff V_t&=-\nabla U(X_t)\diff t-\gamma V_t\diff t+\sqrt{2\gamma}\diff B_t\,,
    \end{align*}
    where $(B_t)_{t\geq0}$ is a $d$-dimensional Brownian motion and $\gamma\geq 0$ is the friction parameter, see e.g.\ \cite{pavliotis2014stochastic}. Its invariant measure is $\hat\mu=\mu\otimes\kappa$ with the standard normal distribution $\kappa = \mathcal{N}(0,I_d)$, and its generator in $L^2(\hat\mu)$ is given by $\Lhat f= \Ld f + \gamma\Lv f$, where 
    \begin{eqnarray*}
        \Ld f(x,v) &=& v\cdot\nabla_xf(x,v)-\nabla U(x)\cdot\nabla_vf(x,v),\\
         \Lv f(x,v)&=&-v\cdot\nabla_vf(x,v)+\Delta_vf(x,v),
    \end{eqnarray*}
    are the generators of the Hamiltonian flow associated to the Hamiltonian $H(x,v) = U(x)+\frac{1}{2}|v|^2$, and of an Ornstein-Uhlenbeck process in the velocity, respectively.
    Let us check Assumptions \ref{asm:A}--\ref{asm:C}.
    The generator $(\Ld,\dom(\Ld))$ of the Hamiltonian flow is antisymmetric in $L^2(\hat\mu)$ and is a lift of an overdamped Langevin diffusion with generator $(\mL,\dom(\mL))$ by \cite[Example 3(i)]{eberle24}.
    This can be checked immediately since $\Ld(f\circ\pi)(x,v) = v\cdot\nabla f(x)$ for all $f\in C_\comp^\infty(\R^d)$, so that Assumption~\ref{asm:A} is satisfied.
    Assumption~\ref{asm:B} is satisfied by our assumptions on the potential.
    It remains to check Assumption~\ref{asm:C}. For \eqref{eq:C1}, since the Ornstein-Uhlenbeck process with generator $\Lv$ leaves $\kappa$ invariant and has spectral gap $\LvInvPoincareConstant=1$, see for example \cite{BGL14}, by Remark~\ref{rem:assumptions}\eqref{rem:C1_with_transpose}, it is enough to prove
    \begin{align*}
        \Vert \Ld^* \Ld (g  \circ \pi) + (2 \mathcal L g) \circ \pi \Vert_{L^2(\hat\mu)} \le C_1 \Vert 2 \mathcal L g\Vert_{L^2(\mu)}.
    \end{align*}
    We have
    \begin{align*}
        \Ld^*\Ld (g \circ \pi)(x,v) &= -v^\top\nabla^2g(x) v - \nabla U(x) \cdot \nabla g(x)
    \end{align*}
    and $2 \mathcal L g = \Delta g + \nabla U \cdot \nabla g$, so that
    \begin{align*}
        \int_{\R^d} \left(\Ld^*\Ld (g \circ \pi)(\cdot, v) + 2 \mathcal L g\right)^2 \diff\kappa(v) &= \int_{\R^d} \big(v^\top\nabla^2g v - \Delta g\big)^2 \diff\kappa(v), \\
        &= \int_{\R^d} \big(v^\top\nabla^2g v \big)^2 \diff\kappa(v) - \left(\Delta g\right)^2,
    \end{align*}
    using the fact that $\int v^\top\nabla^2g v \diff\kappa(v) = \Delta g$.
    Furthermore, since $v_i, v_j, v_k, v_l$ are independent standard Gaussians,
    \begin{align*}
        \int_{\R^d} \big(v^\top\nabla^2g v \big)^2 \diff\kappa(v) &= \sum_{i, j, k, l = 1}^d (\partial_{ij} g) (\partial_{kl} g) \int_{\R^d} v_i v_j v_k v_l \diff\kappa(v) \\
        &=(\Delta g)^2 + 2\Vert\nabla^2g\Vert_F^2,
    \end{align*}
    where $\Vert\nabla^2g\Vert_F^2=\sum_{i, j = 1}^d (\partial_{ij} g)^2$ is the squared Frobenius norm of $\nabla ^2g$.
    By Bochner's formula, using the lower curvature bound \eqref{eq:curvaturebound}, we have
    \begin{align*}
        \big\Vert\Vert\nabla^2g\Vert_F\big\Vert_{L^2(\mu)}^2 = \Vert 2 \mathcal Lg \Vert_{L^2(\mu)}^2 - \int_{\R^d} \nabla g^\top\nabla^2U\nabla g \diff\mu\leq \Vert 2 \mathcal Lg \Vert_{L^2(\mu)}^2+2Km\,\mathcal E(g),
    \end{align*}
    Since $\E(g)\leq\frac{1}{m}\lVert\mL g\rVert_{L^2(\mu)}^2$, overall we obtain
    \begin{align*}
        \Vert \Ld^* \Ld (g  \circ \pi) + (2 \mathcal L g) \circ \pi \Vert_{L^2(\hat\mu)}^2 = 2\big\Vert\Vert\nabla^2g\Vert_F\big\Vert_{L^2(\mu)}^2 \leq (2+K)\Vert 2 \mathcal Lg \Vert_{L^2(\mu)}^2
    \end{align*}
    and (\ref{eq:C1}) holds with $C_1 = \sqrt{2+K}$.
    Finally, Assumption~\eqref{eq:C2} is satisfied with $C_2=\frac{1}{\sqrt{2m}}$ since
    \begin{equation*}
        \big<\Ld (g \circ \pi),\Lv f\big>_{L^2(\hat\mu)} = \left< v \cdot \nabla_x g,-\nabla_v^* \nabla_v f\right>_{L^2(\hat\mu)} =  \left<\nabla_xg,-\nabla_vf\right>_{L^2(\hat \mu)}.
    \end{equation*}
    
    Hence, by Corollary~\ref{coro:decaylift}, when choosing $T=m^{-1/2}$, the transition semigroup $(\hat P_t)$ associated to Langevin dynamics is exponentially contractive in $T$-average with rate
    \begin{equation*}
        \nu = \Omega\left( \frac{\gamma m}{\gamma^2 + (1+K)m} \right).
    \end{equation*}
    Choosing $\gamma = \sqrt{(1+K)m}$ yields the rate $\nu = \Omega(\sqrt{m}/\sqrt{1+K})$. We thus recover the result of \cite[Theorem 1]{cao23}.
\end{Exa}

\subsection{Divergence lemma and space-time property of lifts}\label{ssec:divergencelemma}

In the classical elliptical setting, a Poincaré inequality can be established by solving the Poisson equation $-\mL g=f$ with explicit bounds on $g$. Similarly, in the hypoelliptic setting, a generalised time-dependent Poisson equation may be used to obtain a flow Poincaré inequality. The divergence Lemma~\ref{lem:divergence_lemma} provides solutions to such an equation with explicit bounds on the solution and is at the heart of the proof of the flow Poincaré inequality Theorem~\ref{thm:FPIlift}. Different versions of this divergence lemma have been studied recently by \cite{cao23,brigati23,eberle24b, Lehec25}, and a related divergence equation with different boundary conditions is of fundamental importance in fluid mechanics \cite{galdi11}. We take an approach using Dirichlet forms, adapting the proof of \cite{eberle24b} and enabling minimal structural assumptions on $\mL$. 

Let us assume that $\S\subseteq\R^d$ and that $C^\infty(\S)\cap L^2(\mu)\subseteq\dom(\mL)$. For $g\in L^2(\lambda\otimes\mu)$ such that $g(t,\cdot)\in\dom(\E)$ for $\lambda$-almost every $t\in[0,T]$, denote by
\begin{equation*}
    \E_T(g) = \int_0^T \E(g(t,\cdot))\diff t
\end{equation*}
the time-integrated Dirichlet form.
Let the Sobolev space $H^{1,2}_\D(\lambda\otimes\mu)$ with Dirichlet boundary conditions in time be the closure of 
\begin{equation*}
    \{u\in C^\infty([0,T]\times\S)\cap L^2(\lambda\otimes\mu)\colon u(0,\cdot)=u(T,\cdot)=0\}
\end{equation*}
with respect to the norm given by 
\begin{equation*}
    \Vert u\Vert_{1,2} = \myVert{u}+\E_T(u)^{1/2} + \myVert{\partial_tu},
\end{equation*}
and similarly let $H^{2,2}_\D(\lambda\otimes\mu)$ be the closure of the same set with respect to
\begin{equation*}
    \Vert u\Vert_{2,2} = \myVert{u}+\E_T(u)^{1/2}+\myVert{\partial_tu}+\myVert{\mL u}+\myVert{\partial_{tt}u}+\E_T(\partial_tu)^{1/2}.
\end{equation*}
In particular, if $u\in H^{2,2}_\D(\lambda\otimes\mu)$, then $u(t,\cdot)\in\dom(\mL)$ for $\lambda$-a.e.\ $t\in[0,T]$ since $(\mL,\dom(\mL))$ is closed. 
The assumption that $\S\subseteq\R^d$ can be relaxed to $\S$ being a Riemannian manifold with boundary, see \cite{eberle24b}.

\begin{Lem}[Divergence lemma]\label{lem:divergence_lemma}
    Suppose that Assumption~\ref{asm:B} is satisfied. 
    Then for any $T\in (0,\infty)$, there exists a constant $C\in (0,\infty)$ such that for any $f\in L_0^2(\lambda\otimes\mu)$ there are functions
    \begin{equation*}
        h\in H_\D^{1,2}(\lambda\otimes\mu)\qquad\text{and}\qquad g\in H_\D^{2,2}(\lambda\otimes\mu)
    \end{equation*}
    such that 
    \begin{equation*}
        \partial_th-2\mL g \ = \ f,
    \end{equation*}
    and the functions $h$ and $g$ satisfy
    \begin{align*}
    \Vert 2 \mathcal L g \Vert_{L^2(\lambda \otimes \mu)} &\ \le\ C \Vert f \Vert_{L^2(\lambda \otimes \mu)},\\
    \sqrt{2 \mathcal E_T(h)} &\ \le\ C \Vert f \Vert_{L^2(\lambda \otimes \mu)}, \\
    \sqrt{2 \mathcal E_T(\partial_t g)} &\ \le\ C(1 + T^{-1}m^{-1/2}) \Vert f \Vert_{L^2(\lambda \otimes \mu)}.
    \end{align*}
\end{Lem}

The proof of Lemma~\ref{lem:divergence_lemma} is delayed until Section~\ref{ssec:DivLemmaProof}. 
The final ingredient for the proof of the flow Poincaré inequality is the following space-time property of lifts. We consider the operator $(A,\dom(A))$ on $L^2([0,T]\times\Shat,\lambda\otimes\hat\mu)$ defined by
\begin{equation}\label{eq:defA}
    Af=-\partial_tf+\Ld f
\end{equation}
with domain consisting of all functions  $ f\in L^2(\lambda\otimes\hat\mu)$ such that $f(\cdot,x,v)$ is absolutely continuous for $\hat\mu$-a.e.\ $(x,v)\in\Shat$ with $\partial_t f\in L^2(\lambda\otimes\hat\mu)$, and $f(t,\cdot)\in\dom(\Ld)$ for $\lambda\textup{-a.e.\ }t\in[0,T]$ with $\Ld f\in L^2(\lambda\otimes\hat\mu)$.

\begin{Lem}[Space-time property of lifts]\label{lem:space_time_property_of_lifts}
Let $(\Ld,\dom(\Ld))$ be a lift of $(\mL,\dom(\mL))$. Then, for any $f,g,h\in L^2(\lambda\otimes\mu)$ such that $f\circ\pi\in\dom(A)$ and $g(t,\cdot)\in\dom(\mL)$ for a.e.\ $t\in[0,T]$ with $\mL g\in L^2(\lambda\otimes\mu)$, we have
\begin{align*}
    \big<A (f \circ \pi), h \circ \pi + \Ld (g \circ \pi)\big>_{L^2(\lambda\otimes\hat\mu)} = -\myAngle{h, \partial_t f} + 2 \mathcal E_T (f, g).
\end{align*}
\end{Lem}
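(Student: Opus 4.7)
The plan is to expand the left-hand side by bilinearity and evaluate the four resulting pieces via Fubini together with the defining properties \eqref{eq:deflift1}--\eqref{eq:deflift2} of a second-order lift, applied slicewise in $t$.

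Since $\pi(x,v)=x$, the function $f\circ\pi$ depends only on $(t,x)$, so $\partial_t(f\circ\pi)=(\partial_t f)\circ\pi$. Writing $A(f\circ\pi)=-(\partial_t f)\circ\pi+\Ld(f\circ\pi)$ and expanding the inner product against $h\circ\pi+\Ld(g\circ\pi)$ gives
\begin{align*}
I_1 &= -\myAngleHat{(\partial_t f)\circ\pi,\; h\circ\pi}, & I_2 &= -\myAngleHat{(\partial_t f)\circ\pi,\; \Ld(g\circ\pi)},\\
I_3 &= \myAngleHat{\Ld(f\circ\pi),\; h\circ\pi}, & I_4 &= \myAngleHat{\Ld(f\circ\pi),\; \Ld(g\circ\pi)}.
\end{align*}
The disintegration $\hat\mu(\diff x\diff v)=\mu(\diff x)\kappa_x(\diff v)$ reduces $I_1$ immediately to $-\myAngle{\partial_t f,h}$. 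By Fubini (and symmetry of the real inner product), both $I_2$ and $I_3$ become time integrals of quantities of the form $\myAngleHat{\Ld(\varphi\circ\pi),\psi\circ\pi}$, which vanish by the first-order lift condition \eqref{eq:deflift1}. Fubini combined with the second-order condition \eqref{eq:deflift2} turns $I_4$ into $2\int_0^T\E(f(t,\cdot),g(t,\cdot))\diff t$, which is what the statement abbreviates as $2\E(f,g)$. Summing $I_1+I_2+I_3+I_4$ yields the identity.

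The main technical obstacle is that \eqref{eq:deflift1}--\eqref{eq:deflift2} are given only on a core $C$ of $\mL$, whereas the lemma asks them for slices $f(t,\cdot)$ and $g(t,\cdot)$ lying in the larger spaces $\dom(\Ld)$ (via $f\circ\pi\in\dom(A)$) and $\dom(\mL)$. To close this gap, one extends by continuity: for fixed $t$, the bilinear form $(u,w)\mapsto\langle\Ld(u\circ\pi),w\circ\pi\rangle_{L^2(\hat\mu)}$ is continuous in $u$ with respect to the graph norm of $\Ld$ and in $w$ with respect to $\|\cdot\|_{L^2(\mu)}$, and $C$ is dense in both norms; this extends \eqref{eq:deflift1} to the generality needed for $I_2$ and $I_3$. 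For $I_4$, identity \eqref{eq:deflift2} on $C$ identifies $u\mapsto\myAngleHat{\Ld(u\circ\pi),\Ld(u\circ\pi)}^{1/2}$ with $\sqrt{2\E(u,u)}$, so extension from $C$ to $\dom(\E)$ is automatic by closure. Finally, the measurability and integrability in $t$ required to apply Fubini in every step are supplied directly by the hypotheses $f\circ\pi\in\dom(A)$, $h\in L^2(\lambda\otimes\mu)$, and $\mL g\in L^2(\lambda\otimes\mu)$.
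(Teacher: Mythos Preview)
Your proposal is correct and takes essentially the same approach as the paper: the paper's proof is a one-line appeal to the lift conditions \eqref{eq:deflift1}--\eqref{eq:deflift2} together with a reference to \cite[Lemma~18]{eberle24}, and your decomposition into $I_1,\dots,I_4$ with slicewise application of those conditions (plus the density/extension from the core $C$) is exactly the argument that reference unpacks.
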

\begin{proof}
    This follows directly from the defining properties \eqref{eq:deflift1} and \eqref{eq:deflift2} of second-order lifts, see \cite[Lemma 18]{eberle24}. 
\end{proof}

\subsection{Proof of the flow Poincaré inequality for second-order lifts}\label{ssec:FPIproof}

We now prove Theorem~\ref{thm:FPIlift}.
The proof is similar to that of \cite[Theorem 23]{eberle24}. However, bounding $\int_0^T\Vert \hat P_tf\Vert_{L^2(\hat\mu)}^2\diff t$ directly by the time-integrated velocity Dirichlet form $\int_0^T\E_{v}(\hat P_tf)\diff t$ in this general setting requires more refined estimates.

\begin{proof}[Proof of Theorem~\ref{thm:FPIlift}] Let $f_0\in L^2_0(\hat\mu)$ and let $f\in L^2(\lambda\otimes\hat\mu)$ be defined by
\begin{equation*}
    f(t,x,v) = (\hat P_t f_0)(x,v).
\end{equation*}
Further let $\tilde f \in L^2(\lambda \otimes \mu)$ be such that $\Pi_v f = \tilde f \circ \pi$. Let $\tilde f = \partial_th-2\mL g$ with 
$h$ and $g$ as in Lemma~\ref{lem:divergence_lemma}. Because $\Pi_v$ is an orthogonal projection, we have
$$
\myVertHat{f}^2 = \Vert \tilde f \Vert_{L^2(\lambda \otimes \mu)}^2 + \myVertHat{f - \Pi_v f}^2.
$$
Furthermore, by an integration by parts and the space-time property of lifts in Lemma~\ref{lem:space_time_property_of_lifts},
\begin{align}\label{eq:FlowPoincareProof}
    \Vert \tilde f \Vert_{L^2(\lambda \otimes \mu)}^2 &=\big<\partial_t h - 2 \mathcal L g, \tilde f\big>_{L^2(\lambda\otimes\mu)} =-\big<h, \partial_t \tilde f\big>_{L^2(\lambda\otimes\mu)} + 2\mathcal E_T(\tilde f, g) \nonumber\\
    &= \big< h \circ \pi + \Ld (g \circ \pi), A(\tilde f \circ \pi)\big>_{L^2(\lambda \otimes \hat \mu)}\\
    &= \big< h \circ \pi + \Ld (g \circ \pi), A f \big>_{L^2(\lambda \otimes \hat \mu)} + \big< h \circ \pi + \Ld (g \circ \pi), A(\Pi_v f - f)\big>_{L^2(\lambda \otimes \hat \mu)}\nonumber
\end{align}
The two summands on the right-hand side of \eqref{eq:FlowPoincareProof} can be bounded separately. 

Firstly, we have $Af = -\partial_t f + \Ld f = -\gamma \Lv f$, so that $\langle h\circ\pi,Af\rangle_{L^2(\lambda\otimes\hat\mu)}=0$,
and thus
\begin{align*}
    \MoveEqLeft\big<h \circ \pi + \Ld (g \circ \pi), Af\big>_{L^2(\lambda\otimes\hat\mu)} = -\gamma \big<\Ld (g \circ \pi),\Lv f\big>_{L^2(\lambda\otimes\hat\mu)} \\
    &\le  \gamma C_2\int_0^T\lVert2\mL g\rVert_{L^2(\mu)}(\E_v(f))^{1/2}\diff t \le  \gamma C\cdot C_2  \int_0^T\Vert\tilde f\Vert_{L^2(\mu)}(\mathcal E_{v}(f))^{1/2}\diff t\\
    &\leq \gamma C\cdot C_2\lVert\tilde f\rVert_{L^2(\lambda\otimes\mu)}\Big(\int_0^T \E_v(f)\diff t\Big)^{1/2},
\end{align*}
using Assumption~(\ref{eq:C2}) for the first inequality and Lemma~\ref{lem:divergence_lemma} for the second.

The second summand in \eqref{eq:FlowPoincareProof} can be estimated using \eqref{eq:defA}.
The Dirichlet boundary conditions in time of $h$ and $g$ give
\begin{align*}
    \MoveEqLeft\big<h \circ \pi + \Ld(g \circ \pi), -\partial_t (\Pi_v f - f)\big>_{L^2(\lambda\otimes\hat\mu)} = \big<\partial_t h \circ \pi + \partial_t \Ld(g \circ \pi),\Pi_v f - f\big>_{L^2(\lambda\otimes\hat\mu)} \\
    &= \big<\partial_t \Ld(g \circ \pi),\Pi_v f - f\big>_{L^2(\lambda\otimes\hat\mu)}\le \Vert\partial_t \Ld (g \circ \pi)\Vert_{L^2(\lambda\otimes\mu)} \myVertHat{f - \Pi_v f}. 
\end{align*}
Furthermore, using the fact that $\partial_t$ commutes with $\Ld(\cdot \circ \pi)$ and the second-order lift property, we get
\[
	\Vert\partial_t \Ld (g \circ \pi)\Vert_{L^2(\lambda\otimes\hat\mu)}  = (2\E_T(\partial_tg))^{1/2}\leq C\Big(1+\frac{1}{\sqrt{m}T}\Big)\lVert\tilde f\rVert_{L^2(\lambda\otimes\mu)}.
\]
We deduce that
\begin{align*}
	\MoveEqLeft\big<h \circ \pi + \Ld(g \circ \pi), -\partial_t (\Pi_v f - f)\big>_{L^2(\lambda\otimes\hat\mu)}\\
    &\leq \frac{C}{\sqrt{m_v}}\Big(1+\frac{1}{\sqrt{m}T}\Big)\Big(\int_0^T\E_v(f)\diff t\Big)^{1/2}\lVert \tilde f\rVert_{L^2(\lambda\otimes\mu)}^2.
\end{align*}
Finally,
\begin{align*}
\MoveEqLeft\big<h \circ \pi + \Ld(g \circ \pi), \Ld (f - \Pi_v f)\big>_{L^2(\lambda\otimes\hat\mu)} \\
&= -\big<\Ld (h \circ \pi), f - \Pi_v f\big>_{L^2(\lambda\otimes\hat\mu)} + \big<\Ld(g \circ \pi), \Ld (f - \Pi_v f)\big>_{L^2(\lambda\otimes\hat\mu)}\\
&\le \Vert\Ld (h \circ \pi)\Vert_{L^2(\lambda\otimes\hat\mu)} \myVertHat{f - \Pi_v f}+ C_1\int_0^T\lVert2 \mathcal L g\rVert_{L^2(\mu)}(\E_v(f))^{1/2}\diff t\\
&\leq \Big(\frac{1}{\sqrt{m_v}}\big(2\E_T(h)\big)^{1/2} + C_1\lVert2\mL g\rVert_{L^2(\lambda\otimes\mu)}\Big)\Big(\int_0^T\E_v(f)\diff t\Big)^{1/2}\\
&\leq C\Big(\frac{1}{\sqrt{m_v}}+C_1\Big)\lVert\tilde f\rVert_{L^2(\lambda\otimes\mu)}\Big(\int_0^T\E_v(f)\diff t\Big)^{1/2}
\end{align*}
using the weak antisymmetry of $\Ld$, Assumption \eqref{eq:C1} and Lemma~\ref{lem:divergence_lemma}.

Combining everything finally leads to
\begin{align*}
    \myVertHat{f}^2&\le \myVertHat{\Pi_v f - f}^2 \\
    &\qquad+ C^2\Big( \gamma C_2 + \frac{1}{\sqrt{m_v}}\Big(1+\frac{1}{\sqrt{m}T}\Big) + \Big(\frac{1}{\sqrt{m_v}}+C_1\Big) \Big)^2 \int_0^T\E_v(f)\diff t \\
    &\le \tilde C \Big(\gamma^2C_2^2+C_1^2 + \frac{1}{m_v}\Big(1+\frac{1}{mT^2}\Big)\Big)\int_0^T\E_v(f)\diff t,
\end{align*}
where $\tilde C$ denotes an absolute constant, as claimed.

\end{proof}

\subsection{Proof of the divergence lemma}\label{ssec:DivLemmaProof}

We now prove Lemma~\ref{lem:divergence_lemma}. While some differences arise due to the fact that we estimate slightly different quantities, the proof closely follows that of \cite[Theorem 5]{eberle24b}, which is a careful refinement of \cite[Lemma 2.6]{cao23}. For the convenience of the reader, we provide an overview of the proof, omitting the details where they are identical to \cite{eberle24b}.

\begin{proof}[Proof of Lemma~\ref{lem:divergence_lemma}]
    Consider an orthonormal basis \( \{e_k : k \ge 0\} \) of eigenfunctions of \( \mathcal L \) with eigenvalues \( -\alpha_k^2 \), where \( e_0 \) is constant and \( \alpha_0 = 0 \). In particular, \( \inf \{ \alpha_k^2\colon k > 0 \} = m \) is the spectral gap of $\mathcal L$. Then \( \{H_k^a : k \ge 0\} \cup \{H_k^s : k > 0\} \) given by
    \begin{align*}
    	H_0^a(t, x) &= (t - (T - t)) e_0(x), \\
    	H_k^a(t, x) &= \big(e^{-\alpha_k t} - e^{-\alpha_k(T - t)}\big) e_k(x) \text{ for } k > 0, \\
    	H_k^s(t, x) &= \big(e^{-\alpha_k t} + e^{-\alpha_k(T - t)}\big) e_k(x) \text{ for } k > 0,
    \end{align*}
    defines an orthogonal basis of the space
    \begin{align*}
        \mathbb{H} = \big\{u\in L_0^2(\lambda\otimes\mu)\colon &u(\cdot,x)\in H^2((0,T))\text { for } \mu\text{-a.e.\ } x,\,u(t, \cdot) \in \text{Dom}(\mathcal L) \text{ for } \lambda\text{-a.e.\ } t\text{, and}\\
        &\partial_{tt}u+2\mL u = 0\big\}
    \end{align*}
    of mean-zero harmonic functions for $\partial_{tt}+2\mL$. We consider a simultaneous decomposition of $\mathbb{H}$ into high and low modes and symmetric and antisymmetric functions, i.e.\
    \begin{equation*}
    	L^2_0(\lambda \otimes \mu) = \mathbb H \oplus \mathbb H^\perp = \mathbb H_{l, a} \oplus \mathbb H_{l, s} \oplus \mathbb H_{h, a} \oplus \mathbb H_{h, s} \oplus \mathbb H^\perp,
    \end{equation*}
    where
    \begin{align*}
    	\mathbb H_{l, a} &= \spn \Big\{ H_k^a : \alpha_k \le \frac{2}{T} \Big\},&
    	\mathbb H_{l, s} &= \spn \Big\{ H_k^s : \alpha_k \le \frac{2}{T} \Big\}, \\
    	\mathbb H_{h, a} &= \overline{\spn}\,\Big\{ H_k^a : \alpha_k > \frac{2}{T} \Big\},&
    	\mathbb H_{h, s} &= \overline{\spn}\, \Big\{ H_k^s : \alpha_k > \frac{2}{T} \Big\}.
    \end{align*}
    Due to linearity, it suffices to prove the statement separately for right-hand sides $f$ in these subspaces and in $\mathbb{H}^\perp$.

    \textit{Case 1: } Let $f\in\mathbb{H}^\perp$. Consider the operator $(\bar\mL,\dom(\bar\mL))$ on $L^2(\lambda\otimes\mu)$ given by $\bar\mL f = \partial_{tt}f+2\mL f$ with Neumann boundary conditions in time, i.e.\
    \begin{align*}
        \dom(\bar\mL) &= \big\{f\in L^2(\lambda\otimes\mu)\colon f(t,\cdot)\in\dom(\mL)\text{ for }\lambda\text{-a.e.\ }t\in(0,T),\\
        &\qquad\qquad f(\cdot,x)\in H^{2,2}((0,T))\text{ for }\mu\text{-a.e.\ }x\in\S\text{ and }\partial_tf = 0\text{ on }\{0,T\}\times\S,\\
        &\qquad\qquad\partial_{tt}f+\mL f\in L^2(\lambda\otimes\mu)\big\}.
    \end{align*}
    By tensorisation, $\bar\mL$ satisfies a Poincaré inequality with constant $\min(2m,\frac{\pi^2}{T})$. Hence $\bar\mL$ has a spectral gap, so that
    \begin{equation*}
        -\bar\mL|_{L_0^2(\lambda\otimes\mu)}\colon\dom(\bar\mL)\cap L_0^2(\lambda\otimes\mu)\to L_0^2(\lambda\otimes\mu)
    \end{equation*}
    admits a bounded linear inverse
    \begin{equation*}
        \bar{G}\colon L_0^2(\lambda\otimes\mu)\to L_0^2(\lambda\otimes\mu)\cap\dom(\bar\mL)
    \end{equation*}
    whose operator norm is bounded above by $\max(\frac{1}{2m},\frac{T^2}{\pi^2})$. We hence set $g = \bar Gf$ and $h=-\partial_tg$. The Dirichlet boundary conditions in time for $h$ are satisfied by construction, and the Dirichlet boundary conditions for $g$ follow from an integration by parts as in \cite{eberle24b}. On $L_0^2(\lambda\otimes\mu)\cap\dom(\bar\mL)$, the operators $-\partial_{tt}$ and $-2\mL$ commute and have discrete spectrum, so that the operators norms of
    \begin{equation*}
        2\mL\bar G = 2\mL(-\partial_{tt}-2\mL)^{-1}\quad\text{and}\quad\partial_{tt}\bar G = \partial_{tt}(-\partial_{tt}-2\mL)^{-1}
    \end{equation*}
    are bounded by one.
    Since
    \begin{equation*}
        2\E_T(h) = 2\E_T(\partial_tg) = \langle -\partial_{tt}\bar Gf,-2\mL\bar Gf\rangle_{L^2(\lambda\otimes\mu)}\leq\Vert f\Vert_{L^2(\lambda\otimes\mu)}^2,
    \end{equation*}
    for right-hand sides $f\in\mathbb{H}^\perp$, we obtain the claim with the estimates
    \begin{equation*}
        \lVert 2\mL g\rVert_{L^2(\lambda\otimes\mu)}\leq \lVert f\rVert_{L^2(\lambda\otimes\mu)}\quad\text{and}\quad \sqrt{2\E_T(h)} = \sqrt{2\E_T(\partial_tg)}\leq \lVert f\rVert_{L^2(\lambda\otimes\mu)}. 
    \end{equation*}

    \textit{Case 2:} Let $f\in\mathbb{H}_{l,a}$. Here,   as in \cite{eberle24b}, we set $h(t, x) = \int_0^t f(s, x) \diff s$ and $g(t, x) = 0$. Then the boundary conditions are satisfied due to the antisymmetry of $f$ and we obtain
    \begin{eqnarray*}
        \Vert h\Vert_{L^2(\lambda\otimes\mu)}^2 &\leq &\frac{T^2}{2}\Vert f\Vert_{L^2(\lambda\otimes\mu)}^2,\\
        \Vert\mL h\Vert_{L^2(\lambda\otimes\mu)}^2 &\leq &\int_0^T t \int_0^t \frac{16}{T^4}\myVert{f(s, \cdot)}^2 \diff s\diff t \le \frac{8}{T^2} \myVert{f}^2,
    \end{eqnarray*}
    where we used that $\myVert{\mL f(s,\cdot)}\leq\frac{4}{T^2}\myVert{f(s,\cdot)}$ since $f\in\mathbb{H}_{l,a}$. By Cauchy-Schwarz, this yields $\E_T(h)\leq 2\myVert{f}$, so that, for right-hand sides $f\in\mathbb{H}_{l,a}$, we obtain the statement with the estimates
    \begin{equation*}
        \lVert 2\mL g\rVert_{L^2(\lambda\otimes\mu)}=\sqrt{2\E_T(\partial_tg)}=0\quad\text{and}\quad \sqrt{2\E_T(h)} \leq 2\lVert f\rVert_{L^2(\lambda\otimes\mu)}. 
    \end{equation*}

    \textit{Case 3:} Let $f\in\mathbb{H}_{l,s}$. Since $f$ is symmetric, it no longer necessarily integrates to $0$ in time and we cannot argue as previously. We consider the decomposition $f=f_0+f_1$ with
    \begin{align*}
    	f_0(t, x) = f(0, x) \cos\left(\frac{2\pi t}{T}\right) \qquad\text{and}\qquad
    	f_1(t, x) = f(t, x) - f_0(t, x)
    \end{align*}
    into a part $f_0$ that integrates to $0$ in time and a part $f_1$ with Dirichlet boundary values. We then set $h(t,x) = \int_0^tf_0(s,x)\diff s$ and $g(x) = Gf_1(t,\cdot)(x)$, where $G = -(2\mL)^{-1}$ on $\spn\{e_k\colon \alpha_k\leq\frac{2}{T}\}$, i.e.\ $Ge_k = \frac{1}{2\alpha_k^2}e_k$, so that indeed $f = \partial_th-2\mL g$.
    As in \cite{eberle24b}, one shows that
    \begin{equation*}
        \myVert{f_0}^2\leq e^2\myVert{f}^2\quad\text{and}\quad \myVert{2\mL g}^2=\myVert{f_1}^2\leq (1+e)^2\myVert{f}^2,
    \end{equation*}
    and the same estimate as previously yields $2\E_T(h)\leq 2\myVert{f_0}^2\leq 4e^2\myVert{f}^2$. The term $2\E_T(\partial_tg)$ can be bounded as in \cite{eberle24b}, giving
    \begin{equation*}
        2\E_T(\partial_tg) \leq \frac{(2+\sqrt{2}\pi e)^2}{2mT^2}\myVert{\partial_tf_1}^2.
    \end{equation*}
    Hence, for right-hand sides $f\in\mathbb{H}_{l,s}$, the statement is satisfied 
    with the estimates
    \begin{align*}
        \lVert 2\mL g\rVert_{L^2(\lambda\otimes\mu)} &\leq (1+e)\lVert f\rVert _{L^2(\lambda\otimes\mu)},\quad \sqrt{2\E_T(h)}\leq 2e\lVert f\rVert _{L^2(\lambda\otimes\mu)},\\
        \sqrt{2\E_T(\partial_tg)}&\leq \frac{10}{\sqrt mT}\lVert f\rVert_{L^2(\lambda\otimes\mu)}.
    \end{align*}    

    \textit{Case 4:} Let $f\in\mathbb{H}_{h,a}$. As in \cite{eberle24b}, we use the expansion $f(t,x) = \sum_{\alpha_k>\frac{\beta}{T}}b_kH_k^a(t,x)$, and it is sufficient to derive a representation
    \begin{equation*}
        H_k^a = \partial_th_k-2\mL g_k
    \end{equation*}
    for each of the basis functions $H_k^a$ with $k\geq 1$ fixed. We write  $u_k(t) = e^{-\alpha_kt}-e^{-\alpha_k(T-t)}$, so that $H_k^a(t,x) = u_k(t)e_k(x)$ and employ the ansatz 
    \begin{equation}\label{eq:ansatzuvw_a}
        u_k = \dot v_k+w_k\quad\textup{with }v_k(0)=v_k(T)=w_k(0)=w_k(T)=0\,.
    \end{equation}
    Then setting $h_k(t,x) = v_k(t)e_k(x)$ and $g_k(t,x) = \frac{1}{2\alpha_k^2}w_k(t)e_k(x)$ yields $H_k^a = \partial_th_k-2\mL g_k$ as desired. The construction of such $v_k$ and $w_k$ and the associated bounds follow as in \cite{eberle24b}, we finally obtain the claim for right-hand sides $f\in\mathbb{H}_{h,a}$ 
    with the estimates
    \begin{align*}
        \lVert 2\mL g\rVert_{L^2(\lambda\otimes\mu)} &\leq \Big(1+\frac{1}{\sqrt{3}}\Big)\lVert f\rVert _{L^2(\lambda\otimes\mu)},\quad \sqrt{2\E_T(h)}\leq \frac{1}{\sqrt{15}}\lVert f\rVert _{L^2(\lambda\otimes\mu)},\\
        \sqrt{2\E_T(\partial_tg)}&\leq 8\lVert f\rVert_{L^2(\lambda\otimes\mu)}.
    \end{align*}   
    
    \textit{Case 5:} Let $f\in\mathbb{H}_{h,s}$. This can be treated as in the previous case, the bounds even improve slightly, see \cite{eberle24b}. We obtain the claim for $f\in\mathbb{H}_{h,s}$ with the estimates
    \begin{align*}
        \lVert 2\mL g\rVert_{L^2(\lambda\otimes\mu)} &\leq \Big(1+\frac{1}{\sqrt{3}}\Big)\lVert f\rVert _{L^2(\lambda\otimes\mu)},\quad \sqrt{2\E_T(h)}\leq \frac{1}{\sqrt{15}}\lVert f\rVert _{L^2(\lambda\otimes\mu)},\\
        \sqrt{2\E_T(\partial_tg)}&\leq (5+\sqrt{2})\lVert f\rVert_{L^2(\lambda\otimes\mu)}.
    \end{align*}    

    Finally, the statement follows with constants given by $5$ times the maximum of the constants on the five subspaces.

\end{proof}

\section{Convergence of the jamming RTP process}\label{sec:rtp}

The challenges posed by the RTP process are twofold, namely the non-trivial boundary behaviour with positive time spent on the boundary, and the intricate refreshment mechanism described by the velocity jumps.
Addressing the first difficulty, we have already seen in Section~\ref{sec:rtp_introduction} that the RTP process on $[0,L]$ with parameter $\omega$ is a lift of a time-changed sticky Brownian motion on $[0,L]$ with parameter $\omega$. 
We now begin by proving the main result of convergence to equilibrium in Section~\ref{ssec:RTPconvergence} and continue in Section~\ref{ssec:RTPdomainproof} by completing the proof of Theorem~\ref{thm:invariant_measure_generator_RTP} which is necessary to characterise the $L^2$-generator of the RTP process.

\subsection{Convergence to equilibrium}\label{ssec:RTPconvergence}
We have to verify Assumptions~\ref{asm:A}--\ref{asm:C} for the RTP process.
To do so, the first step is splitting the generator
\begin{equation*}
    \Lhat f(x, v) = v 1_{\{ 0 < x < L \}} \partial_x f(x, v) + \sum_{\tilde v \in \V} \lambda_{v\tilde v} f(x, \tilde v),
\end{equation*}
of the RTP process, see \eqref{eq:RTPgenerator}, into $\Lhat = \Ld + \gamma\Lv$. By analogy with Langevin dynamics, it would be tempting to choose
$$
\Ld f(x,v) = v1_{\{0<x<L\}}\partial_xf(x,v) \qquad \text{ and } \qquad \gamma \Lv f(x,v) = \sum_{\tilde v \in \V} \lambda_{v\tilde v} f(x, \tilde v)
$$
to be the deterministic part of the dynamics and the random jumps, respectively.  Unfortunately, that splitting does not take into account the interplay with the boundaries properly and $\hat\mu$ is invariant for neither $\Ld$ nor $\Lv$. Indeed, $\gamma \Lv$ should leave
$$
    \kappa_x =
    \left\{
    \begin{array}{cl}
        \frac12 \delta_0 + \frac12 \delta_{-2} & \text{ if } x = 0, \\
        \frac14 \delta_2 + \frac12 \delta_0 + \frac14 \delta_{-2} & \text{ if } x \in (0, L), \\
        \frac12 \delta_{2} + \frac12 \delta_0 & \text{ if } x = L,
    \end{array}
    \right.
$$
invariant. Therefore, \( \gamma \Lv \) is taken to be jumps in \( v \) with the rates of Figure~\ref{fig:relative_velocity_rates} when \( x \in (0, L) \), the rates of Figure~\ref{fig:rtp_Lv_x_equal_0} when $x = 0$ and the rates of Figure~\ref{fig:rtp_Lv_x_equal_L} when $x=L$.
\begin{figure}[H]
     \centering
     \begin{subfigure}[b]{0.4\textwidth}
         \centering
         \includegraphics[width=.5\textwidth]{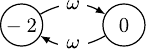}
         \caption{$x = 0$}
         \label{fig:rtp_Lv_x_equal_0}
     \end{subfigure}
     \begin{subfigure}[b]{0.4\textwidth}
         \centering
         \includegraphics[width=.5\textwidth]{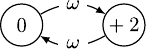}
         \caption{$x = L$}
         \label{fig:rtp_Lv_x_equal_L}
     \end{subfigure}
     \hfill
        \caption{Transition rates of $v$ under $\gamma \Lv$.}
\end{figure}
Choosing $\gamma = \omega$ leads to
\begin{align*}
	\Lv f(x, +2) &= 1_{\{x < L \}} 2(f(x, 0) - f(x, +2)) + 1_{\{x = L\}} (f(x, 0) - f(x, +2)), \\
	\Lv f(x, 0)  &= 1_{\{x > 0\}} (f(x, +2) - f(x, 0)) + 1_{\{ x < L\}} (f(x, -2) - f(x, 0)), \\
	\Lv f(x, -2) &= 1_{\{x > 0\}} 2 (f(x, 0) - f(x, -2)) + 1_{\{x = 0\}} (f(x, 0) - f(x, -2)),
\end{align*}
and 
\begin{align*}
	\Ld f(x, +2) &= 1_{\{x < L \}} (+2)\partial_x f(x, +2) + 1_{\{x = L\}}\omega (f(x, 0) - f(x, +2)), \\
	\Ld f(x, 0)  &= 1_{\{x = 0\}} \omega(f(x, +2) - f(x, 0)) + 1_{\{x = L\}}\omega (f(x, -2) - f(x, 0)), \\
	\Ld f(x, -2) &= 1_{\{x > 0\}} (-2) \partial_x f(x, -2) + 1_{\{x = 0\}}\omega (f(x, 0) - f(x, -2)).
\end{align*}
The transport term $\Ld$ hence describes motion with constant velocity $v$ in the interior $(0,L)$ combined with velocity jumps with rate $\omega$ on the boundary.

Since the RTP process on $[0,L]$ with parameter $\omega$ is a lift of a sticky Brownian motion with parameter $\omega$ by Theorem~\ref{thm:RTP_collapse}, we begin by showing that the generator of sticky Brownian motion satisfies a Poincar\'e inequality and has discrete spectrum.

\begin{Lem}[Poincaré inequality for sticky Brownian motion]\label{lem:stickypoincare} There exists $C > 0$ such that
$$
    \Vert f - \mu(f) \Vert_{L^2(\mu)}^2 \le C \frac{\omega L + (\omega L)^2}{\omega^2} \left< f, -\mL f \right>_{L^2(\mu)}
$$
for all $f \in \dom(\mL)$.
\end{Lem}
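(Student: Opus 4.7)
The plan is to reduce everything to an integral-Poincaré inequality on $(0,L)$ with respect to Lebesgue measure, using the variance identity
\[
\Vert f-\mu(f)\Vert_{L^2(\mu)}^2 \;=\; \tfrac{1}{2}\iint (f(x)-f(y))^2\,\mu(\diff x)\,\mu(\diff y),
\]
combined with the explicit form of the Dirichlet form. Writing $Z = 2+\omega L$, the computation performed in the proof of Theorem~\ref{thm:RTP_collapse}(iii) (integration by parts together with the sticky boundary conditions) already gives
\[
\E(f,f) \;=\; \langle f,-\mL f\rangle_{L^2(\mu)} \;=\; \frac{\omega}{Z}\int_0^L (f'(x))^2\,\diff x
\]
for $f\in\dom(\mL_{C^0})$. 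Since $\dom(\mL_{C^0})$ is a core of $(\mL,\dom(\mL))$ by Theorem~\ref{thm:RTP_collapse}(ii), it suffices to establish the inequality on this subspace.

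First I would split the variance according to the three pieces $\{0\}$, $(0,L)$, $\{L\}$ of $\mu$, which carry masses $1/Z$, $\omega L/Z$, $1/Z$ respectively. The bulk--bulk term is
\[
\tfrac{1}{2}\bigl(\omega/Z\bigr)^2\int_0^L\!\int_0^L (f(x)-f(y))^2\,\diff x\,\diff y \;=\; \frac{\omega^2 L}{Z^2}\int_0^L (f-\bar f)^2\,\diff x,
\]
where $\bar f$ denotes the Lebesgue average. The classical Neumann--Poincaré inequality on $[0,L]$ bounds this by $\frac{\omega^2 L^3}{\pi^2 Z^2}\int_0^L (f')^2\diff x$. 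For the mixed boundary--bulk and boundary--boundary terms I would use Cauchy--Schwarz on $f(x)-f(y)=\int_x^y f'(t)\diff t$, which yields $(f(x)-f(y))^2 \le |x-y|\int_0^L (f')^2\diff t$. Integrating then gives bounds of order $\omega L^2/Z^2$ for the two $\{0,L\}\times(0,L)$ cross terms and of order $L/Z^2$ for the $\{0\}\times\{L\}$ term, all multiplied by $\int_0^L (f')^2\diff x$.

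Summing these four contributions, the total variance is bounded by
\[
\frac{1}{Z^2}\Bigl(L \;+\; \omega L^2 \;+\; \tfrac{1}{\pi^2}\omega^2 L^3\Bigr)\int_0^L (f')^2\,\diff x \;\le\; \frac{L\,(1+\omega L)(2+\omega L)}{Z^2}\cdot \mathrm{const}\cdot\int_0^L (f')^2\,\diff x,
\]
and recognising $(2+\omega L)/Z=1$ together with the expression of $\E(f,f)$ gives the coefficient $\frac{L(1+\omega L)}{Z}\int (f')^2 = \frac{\omega L+(\omega L)^2}{\omega^2}\,\E(f,f)$, as required.

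There is no real obstacle here; the only point requiring care is the coefficient bookkeeping to verify that the three scales $L$, $\omega L^2$, $\omega^2 L^3$ combine cleanly into $L(1+\omega L)(2+\omega L)$ so that the final constant is independent of $\omega$ and $L$. An alternative, slightly sharper route would use the spectral decomposition of Neumann Laplacian plus the point masses (a tensorisation argument), but the elementary Cauchy--Schwarz route above is more than sufficient to obtain a constant of the stated order.
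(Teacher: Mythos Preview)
Your proof is correct and follows essentially the same approach as the paper: both decompose the variance of $\mu$ into contributions coming from the absolutely continuous part and the two point masses, bound the bulk contribution by the Neumann--Poincar\'e inequality for the uniform measure on $[0,L]$, and bound the remaining terms via Cauchy--Schwarz on $f(x)-f(y)=\int_x^y f'$. The only cosmetic difference is that the paper groups the two Dirac masses into a single two-point measure and applies the law-of-total-variance formula for a mixture $\mu=p_1\,\mathcal U_{[0,L]}+p_2\,\tfrac12(\delta_0+\delta_L)$, whereas you expand the double integral $\tfrac12\iint(f(x)-f(y))^2\mu(\diff x)\mu(\diff y)$ directly into the $3\times3$ block structure; the resulting estimates and final constant are the same.
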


\begin{proof}

Note that
$$
    \left< f, (-\mL f) \right>_{L^2(\mu)} = \frac\omega{2+\omega L} \int_0^L (f')^2 \diff x
$$
hence the goal is to establish an inequality of the form
$$
    \Var_\mu(f) \le \frac{1}{m} \frac{\omega}{2 + \omega L} \int_0^L (f')^2 \diff x.
$$
Recall that if $\mu_p=\int \Pi_\theta \diff p(\theta)$ with  probability measures $\Pi_\theta$ and $p$ then
$$\mbox{Var}_{\mu_p}(f) = \int \mbox{Var}_{\Pi_\theta}(f) p(\diff\theta) +\mbox{Var}_p\left(\int f\diff\Pi_\theta\right).$$
Recall also that
$$\mbox{Var}_{\frac12\delta_0+\frac12\delta_L}(g)=\frac14(g(0)-g(L))^2=\frac14\left(\int_0^L g'(x)\diff x\right)^2.$$
Rather than applying this to the triple mixture we have, we apply it twice on a mixture of two components by rewriting
$$\mu=\frac{\omega L}{2+\omega L} \mathcal U_{[0,L]}+\frac{2}{2+\omega L}\left(\frac 12\delta_0+\frac12\delta_L\right),$$
where $ \mathcal U_{[0,L]}$ is the uniform distribution on $[0,L]$. Then
\begin{align*}
\mbox{Var}_\mu(f)&= \frac{\omega L}{2+\omega L}\mbox{Var}_{\mathcal U_{[0,L]}}(f)+\frac{2}{2+\omega L}\mbox{Var}_{\frac12\delta_0+\frac12\delta_L}(f)\\
&\qquad +\frac{2\omega L}{(2+\omega L)^2}\left(\frac{1}{L}\int_0^Lf(x)\diff x-\frac12f(0)-\frac{1}{2}f(L)\right)^2\\
&= \frac{\omega L}{2+\omega L}\mbox{Var}_{\mathcal U_{[0,L]}}(f)+ \frac{1/2}{2+\omega L}\left(\int_0^L f'(x)\diff x\right)^2\\
&\qquad+\frac{2\omega L}{(2+\omega L)^2}\left(\int_0^L f'(x)\frac{x-L/2}{L} \diff x\right)^2.
\end{align*}
We apply the Poincaré inequality for the uniform measure and Cauchy-Schwarz to get
$$
\mbox{Var}_\mu (f)\le \left(\frac{L}{\pi^2}\frac{\omega L}{2+\omega L}+\frac{L}{2(2+\omega L)}+\frac{2\omega L}{(2+\omega L)^2} \cdot \frac{{L}}{12}\right)\int_0^L (f'(x))^2\diff x
$$
using $\int_0^{L} (\frac{x - {L}/2}{{L}})^2 \diff x = \frac{{L}}{12}$. The claim follows since
\begin{align*}
    \MoveEqLeft\left(\frac{L}{\pi^2}\frac{\omega L}{2+\omega L}+\frac{L}{2(2+\omega L)}+\frac{2\omega L}{(2+\omega L)^2} \cdot \frac{{L}}{12}\right) \frac{2 + \omega L}{\omega}=\frac{L^2}{\pi^2}+\frac{L}{2\omega}+\frac{L^2}{6(2+\omega L)}.\qedhere
\end{align*}

\end{proof}

Let us now prove that the generator of sticky Brownian motion has a discrete spectrum. While this result is surely known, we were unable to find a suitable reference.
\begin{Lem}\label{lem:stickydiscrete}
    The generator $(\mL,\dom(\mL))$ of sticky Brownian motion has discrete spectrum.
\end{Lem}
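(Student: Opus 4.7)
The plan is to show that the form domain $\dom(\mathcal E)$ embeds compactly into $L^2(\mu)$; by the standard spectral theory for non-negative closed symmetric forms, this implies that the associated self-adjoint operator $(-\mL,\dom(\mL))$ has compact resolvent, and therefore purely discrete spectrum accumulating only at $+\infty$.

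First I would identify $\dom(\mathcal E)$ explicitly. For $f\in\dom(\mL_{C^0})$, integrating by parts and using the Robin-type boundary conditions $f''(0)=\omega f'(0)$ and $f''(L)=-\omega f'(L)$ (which exactly cancel the boundary mass at $0$ and $L$ against the $\omega$-weighted Lebesgue part of $\mu$), one obtains
\begin{equation*}
    \mathcal E(f,f) \;=\; \frac{\omega}{2+\omega L}\int_0^L (f'(x))^2\diff x.
\end{equation*}
Since $\dom(\mL_{C^0})$ is a core of $\mL$ by Theorem~\ref{thm:RTP_collapse}(ii), the form domain is the completion of this space with respect to $\|f\|_{L^2(\mu)}+\mathcal E(f,f)^{1/2}$. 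Because $\mu$ contains both atoms at $0,L$ and an absolutely continuous part on $(0,L)$, this completion is naturally identified with the Sobolev space $H^1((0,L))$, the boundary values $f(0),f(L)$ being supplied by the $H^1$-trace and recorded through the atomic part of $\mu$. The density of $\dom(\mL_{C^0})$ in $H^1((0,L))$ under the form norm follows from a standard mollification-plus-boundary-correction argument, since the constraints $f''(0)=\omega f'(0)$ and $f''(L)=-\omega f'(L)$ only fix $f''$ at two points and thus do not obstruct $H^1$-approximation.

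Next I would invoke the one-dimensional Rellich--Kondrachov embedding: on the bounded interval $[0,L]$, the inclusion $H^1((0,L))\hookrightarrow C([0,L])$ is compact, since any $H^1$-bounded sequence is equicontinuous by the inequality $|f(x)-f(y)|\le |x-y|^{1/2}\|f'\|_{L^2}$ and uniformly bounded, hence relatively compact in $C([0,L])$ by Arzelà--Ascoli. As $\mu$ is a finite Borel measure on $[0,L]$, the canonical map $C([0,L])\to L^2(\mu)$ is continuous (in fact a contraction up to the total mass), so the composition $\dom(\mathcal E)\hookrightarrow L^2(\mu)$ is compact.

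Finally, the Poincaré inequality established in Lemma~\ref{lem:stickypoincare} ensures that $\mathcal E$ is coercive on the mean-zero subspace, so the resolvent $(I-\mL)^{-1}\colon L^2(\mu)\to\dom(\mathcal E)$ is bounded and, composed with the compact embedding $\dom(\mathcal E)\hookrightarrow L^2(\mu)$, yields a compact operator on $L^2(\mu)$. Hence $(\mL,\dom(\mL))$ has compact resolvent, and by the spectral theorem for self-adjoint operators with compact resolvent its spectrum is purely discrete. The main subtlety I anticipate is precisely the rigorous identification $\dom(\mathcal E)=H^1((0,L))$, since one must verify that the boundary conditions defining $\dom(\mL_{C^0})$ do not restrict the form-closure; apart from this essentially known approximation argument, every step is routine.
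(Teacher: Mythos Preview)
Your argument is correct and takes a genuinely different route from the paper. The paper establishes a Nash inequality for Lebesgue measure on $[0,L]$, converts it to a super-Poincar\'e inequality, transfers that inequality to the sticky Dirichlet form, and then invokes Wang's criterion~\cite{wan00} together with~\cite[Th.~A.6.4]{BGL14} to conclude empty essential spectrum and compact resolvent. Your approach instead identifies the form domain explicitly with $H^1((0,L))$ (using that the Robin-type constraints on $f''$ are invisible at the $H^1$ level) and then obtains compactness of the resolvent directly from the one-dimensional Rellich--Kondrachov/Arzel\`a--Ascoli embedding $H^1((0,L))\hookrightarrow C([0,L])\to L^2(\mu)$. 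This is more elementary and self-contained for the present one-dimensional model, avoiding the Nash/super-Poincar\'e machinery entirely; the paper's approach, by contrast, would generalise more readily to situations where the form domain is not so easily described by a classical Sobolev space. One small remark: your appeal to Lemma~\ref{lem:stickypoincare} for coercivity is not actually needed, since the boundedness of $(I-\mL)^{-1}\colon L^2(\mu)\to\dom(\mathcal E)$ with respect to the form norm $\|\cdot\|_{L^2(\mu)}+\mathcal E(\cdot,\cdot)^{1/2}$ is automatic from the theory of closed symmetric forms; but this does not affect the validity of the argument.
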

\begin{proof}
    We will prove a Nash inequality for the Dirichlet form associated to the sticky Brownian motion and invariant measure $\mu$. Starting from \cite[Chap.~8 Eq.~(42)]{bre11}, a Nash inequality holds for any interval in dimension 1 for the Lebesgue measure, i.e.~there exists $c > 0$ such that
    $$ \left(\int_0^L f^2\diff x\right)^{1/2}\le c \left(\int_0^L |f|\diff x\right)^{2/3}\left(\left(\int_0^Lf^2\diff x\right)^{1/2}+\left(\int_0^L (f')^2\diff x\right)^{1/2}\right)^{1/3}.$$
    By \cite{wan00} or \cite[Chap.~8 Eq.~(43)]{bre11}, this Nash inequality is equivalent to a super Poincaré inequality: For all $s>0$
    $$\int_0^L f^2\diff x\le s\int_0^L (f')^2\diff x+\beta(s)\left(\int_0^L|f|\diff x\right)^2,$$
    with $\beta(s) $ going to infinity of order $\sim s^{-2}$ when $s$ goes to 0, so that we may assume $\beta(s)>1$. Now, let us prove that it implies the same type of super Poincaré inequality for the Dirichlet form associated to sticky Brownian motion. Denoting $a_0=\frac1{2+\omega L}$ and $b_0=\omega a_0$, for all $s>0$ we have
    \begin{align*}
        \int f^2\diff\mu &=a_0(f(0)^2+f(L)^2)+b_0\int_0^L f^2\diff x,\\
                     &\le a_0(f(0)^2+f(L)^2)+b_0 s\int_0^L (f')^2\diff x+b_0\beta(s)\left(\int_0^L|f|\diff x\right)^2,\\
                     &\le b_0 s\int_0^L (f')^2\diff x +b_0\max(b_0^{-2},a_0^{-1})\beta(s)\left(\int|f|\diff\mu\right)^2.
    \end{align*}
    Therefore, the sticky Brownian satisfies a super Poincaré inequality. Then by \cite[Th.~5.1]{wan00}, it has an empty essential spectrum. Now, by \cite[Th.~A.6.4]{BGL14}, the resolvent is compact and thus the generator has discrete spectrum.
\end{proof}

\begin{corollary}
    There exists a universal constant $K > 0$ s.t.~for all $\omega, L > 0$ there exists $C = C(\omega, L) > 0$ such that the transition semigroup $(\hat P_t)$ of the RTP process on $[0, L]$ with parameter $\omega$ satisfies
    $$
        \Vert \hat P_t f \Vert_{L^2(\hat\mu)} \le C e^{-\frac{K \omega}{1 + (\omega L)^2} t} \Vert f \Vert_{L^2(\hat\mu)}
    $$
for all $t \ge 0$ and $f \in L^2_0(\hat\mu)$. \end{corollary}

Note that the relaxation time corresponding to this decay rate is of the same order as the mixing time obtained in~\cite{GHM24}. It reveals the existence of two regimes controlled by the parameter $\omega L$. In the ballistic regime $\omega L \ll 1$, velocity flips are rare, leading to a fast exploration of the position space $\S$ and a comparatively slow exploration of the velocity space $\V$. This results in the scaling $\nu \propto \omega$. On the contrary, in the diffusive regime $\omega L \gg 1$, the high frequency of velocity flips makes the exploration of $\V$ faster than the exploration of $\S$. This leads to the scaling $\nu \propto \omega^{-1} L^{-2}$.

\begin{proof}
    It suffices to check Assumptions~\ref{asm:A}, \ref{asm:B} and~\ref{asm:C} and apply Corollary~\ref{coro:decaylift}.
    
    Assumption~\ref{asm:A}: Recall that $\dom(\mL_{C^0})$ is a core of $\mL$ by Theorem~\ref{thm:RTP_collapse}. For all $f \in \dom(\mL_{C^0})$ we have $\Lv (f \circ \pi) = 0$ hence $\Ld$ is a lift of $\mL$ by Remark~\ref{rem:assumptions}\eqref{rem:Ld_is_also_a_lift}. Furthermore, for $f \in \dom(\mL_{C^0})$ one has
    $$
    \Ld^*(f \circ \pi)(x,v) = -v 1_{\{0 < x < L\}} f'(x) = -\Ld (f \circ \pi)(x,v).
    $$

    Assumption~\ref{asm:B}: Follows from Lemmas~\ref{lem:stickypoincare} and~\ref{lem:stickydiscrete}.

    Assumptions~\ref{asm:C} (ii): A straightforward computation yields
    $$
    \int_\V \Lv f(x, v) \diff\kappa_x(v) = 0 \text{ for all } x \in \S \text{ and } f \in \dom(\Lhat).
    $$
    Let us we prove $\int_0^T \Vert P_t f - \Pi_v P_t f \Vert_{L^2(\hat\mu)}^2 \diff t \le \LvPoincareConstant \int_0^T \mathcal E_v(P_t f) \diff t$ with $m_v=2$. Define the matrices 
    \begin{align*}
    S = \begin{pmatrix}
    1/4 & 0 & 0 \\
    0 & 1/2 & 0 \\
    0 & 0 & 1/4
    \end{pmatrix},
    \qquad
    \mathcal Q = 
    \begin{pmatrix}
    -2 & 2 & 0 \\
    1 & -2 & 1 \\
    0 & 2 & -2
    \end{pmatrix},
    \end{align*}
    as well as the scalar product $\left<x, y\right>_S = x^\top S y$ and let $\Pi$ be the orthogonal projection on the kernel of $\mathcal Q$ with respect to~$\left< \cdot, \cdot \right>_S$. The matrix $\mathcal Q$ is symmetric w.r.t.~the scalar product $\left< \cdot, \cdot \right>_S$ (i.e.~$\mathcal Q = S^{-1} \mathcal Q^\top S$) so it admits an orthonormal basis of eigenvectors. Its eigenvalues are $0, -2, -4$ and thus
    \begin{equation*}\label{eq:bilinear_form_comparison}
        \Vert \xi - \Pi \xi \Vert_S^2 \le \frac12 \left<\xi, (-\mathcal Q) \xi\right>_S\quad\text{for all } \xi \in \mathbb R^3.     
    \end{equation*}
    For $f \in \dom(\Lhat)$, taking $\xi = (P_t f(x, 2), P_t f(x, 0), P_t f(x, -2))^\top$ in the inequality above yields
    \begin{equation}\label{eq:mv_assu_RTP}
        \kappa_x(P_tf(x, \cdot)^2) - \kappa_x(P_tf(x, \cdot))^2 \le \frac12 \int_\V P_tf(x, v) (-\Lv P_tf)(x, v) \diff\kappa_x(v)
    \end{equation}
    for $x \in (0, L)$. A similar argument leads to the same inequality for $x = 0$ and $x = L$. Hence
    \begin{align*}
        \int_0^T \Vert P_t f - \Pi_v P_t f \Vert_{L^2(\hat\mu)}^2 \diff t &= \int_0^T\int_{[0, L]} \left[ \kappa_x(P_t f(x, \cdot)^2) - \kappa_x(P_t f(x, \cdot))^2 \right] \diff\mu(x) \diff t \\
        &\le \frac12 \int_0^T\int_{[0, L]} \int_\V P_t f(x, v) (-\Lv P_t f)(x, v) \diff\kappa_x(v) \diff\mu(x) \diff t.
    \end{align*}
    Therefore, \eqref{eq:mvassu} holds with $\LvInvPoincareConstant=2$.

    Assumption~\ref{asm:C} (i): Let $f\in\dom(\Ld)$ and $g \in \dom(\mL_{C^0})$. We have that $u = f - \Pi_v f$ satisfies $\Pi_v u = 0$, so, in particular,
    \begin{align*}
        u(x, 2) + u(x, -2) = -2 u(x, 0)\qquad\text{for all }x\in(0,L).
    \end{align*}
    Therefore
    \begin{align*}
        &\big< \Ld (g  \circ \pi), \Ld (f - \Pi_v f)\big>_{L^2(\hat\mu)} \\
        	&\qquad= \frac{\omega/4}{2 + \omega L}\int_0^L (+2)\partial_x g(x) (+2) \partial_x u(x, +2) \diff x   \\
    &\qquad\qquad\qquad+	\frac{\omega/4}{2 + \omega L}\int_0^L (-2)\partial_x g(x) (-2) \partial_x u(x, -2) \diff x  \\
    &\qquad=	\frac{\omega}{2 + \omega L}\int_0^L \partial_x g(x) (\partial_x u(x, +2) + \partial_x u(x, -2)) \diff x  \\
    &\qquad=
    	\frac{-2\omega}{2+\omega L} \int_0^L \partial_x g(x) \partial_x u(x, 0) \diff t,\\
    	&\big|\big< \Ld (g  \circ \pi), \Ld (f - \Pi_v f)\big>_{L^2(\hat\mu)} \big|\\
        &\qquad = \big|\big< 2 \mathcal L g, u(\cdot, 0)\big>_{L^2(\mu)} \big| \ \le\ \Vert 2 \mathcal L g\Vert_{L^2(\mu)} \Vert u(\cdot, 0) \Vert_{L^2(\mu)}.
    \end{align*}
    Finally,
    \begin{align*}
    	\Vert u( \cdot, 0) \Vert_{L^2(\mu)}^2 &= \frac1{2+\omega L} u( 0, 0)^2 + \frac\omega{2 + \omega L}\int_0^L u( x, 0)^2 \diff x+ \frac1{2+\omega L} u( L, 0)^2 \\
    	&\le 2\Bigg[\frac{1}{2 + \omega L} \left( \frac12 u( 0, -2)^2 + \frac12 u( 0, 0)^2\right) \\
    	&\quad\quad\quad+ \frac\omega{2+\omega L} \int_0^L \left( \frac14 u( x, -2)^2 + \frac12 u( x, 0)^2 + \frac14 u( x, +2)^2\right) \diff x \\
    	&\quad\quad\quad+ \frac{1}{2 + \omega L} \left( \frac12 u( L, +2)^2 + \frac12 u( L, 0)^2\right) \Bigg] \\
    	&\le 2 \Vert u \Vert_{L^2(\hat\mu)}^2,
    \end{align*}
    and hence
    \begin{align*}
        \big|\big< \Ld (g  \circ \pi), \Ld (f - \Pi_v f)\big>_{L^2(\hat\mu)}\big| \le \sqrt{2} \Vert 2 \mathcal L g\Vert_{L^2(\mu)} \Vert f - \Pi_v f \Vert_{L^2(\hat\mu)} \le \Vert 2 \mathcal L g\Vert_{L^2(\mu)} \sqrt{\mathcal E_v(f)}
    \end{align*}
    using~\eqref{eq:mv_assu_RTP} for the last inequality. Therefore, \eqref{eq:C1} is satisfied with $C_1 = 1$. The arguments used to check~\eqref{eq:mvassu} show that $\Lv$ is a bounded operator and hence~\eqref{eq:C2} is satisfied with $C_2 = O(m^{-1/2})$ by Remark~\ref{rem:assumptions}\eqref{rem:assumption_C_full_refresh}.

    Remark~\ref{rem:decay_rate_in_simple_setting} thus yields the desired decay rate.
\end{proof}

\subsection{Proof of Theorem~\ref{thm:invariant_measure_generator_RTP}}\label{ssec:RTPdomainproof}

The following lemma is surely known, or at least folklore, but we did not find a reference, so the proof is included.
\begin{Lem}\label{lem:comparing_Feller_and_extended_generator}
    Let $(\mathcal L_\mathrm{ext}, \dom(\mathcal L_\mathrm{ext}))$ (resp.~$(\mathcal L_{C^0}, \dom(\mathcal L_{C^0}))$) be the extended (resp.~$C^0$) generator of a Feller process $X(t)$ taking its values in the compact state space $\S$.
    Then $\dom(\mathcal L_{C^0}) = \{ f \in C^0(\S)\colon f \in \dom(\mL_\mathrm{ext}) \text{ and } \mL_\mathrm{ext} f \in C^0(\S) \}$ and
    $$
        \mL_{C^0} f = \mL_\mathrm{ext} f\qquad\text{for all } f \in \dom(\mathcal L_{C^0}).
    $$
\end{Lem}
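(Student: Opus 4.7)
The plan is to prove the two inclusions separately, leveraging Dynkin's formula in one direction and the Feller semigroup's strong continuity in the other.

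For the inclusion $\dom(\mathcal L_{C^0}) \subseteq \{f \in C^0(\S) : f \in \dom(\mL_\mathrm{ext}),\ \mL_\mathrm{ext} f \in C^0(\S)\}$, I would start with $f \in \dom(\mathcal L_{C^0})$ and invoke Dynkin's formula for the Feller semigroup: the process
\begin{equation*}
    M_t^f = f(X_t) - f(X_0) - \int_0^t \mathcal L_{C^0} f(X_s) \diff s
\end{equation*}
is a martingale under every $\mathbb P_x$. Since $\S$ is compact and $\mathcal L_{C^0} f \in C^0(\S)$ is therefore bounded, the integral is well-defined and the martingale property holds in the strong sense. This is precisely the defining condition for $f$ to lie in $\dom(\mL_\mathrm{ext})$ with $\mL_\mathrm{ext} f = \mL_{C^0} f$, which is continuous by hypothesis.

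For the converse inclusion, I would take $f \in C^0(\S)$ with $f \in \dom(\mL_\mathrm{ext})$ and $\mL_\mathrm{ext} f \in C^0(\S)$. Applying $\mathbb E_x$ to the martingale $f(X_t) - f(X_0) - \int_0^t \mL_\mathrm{ext} f(X_s) \diff s$ (again a true martingale by compactness and continuity), Fubini yields the pointwise identity
\begin{equation*}
    (P_t f)(x) - f(x) = \int_0^t (P_s \mL_\mathrm{ext} f)(x) \diff s \qquad\text{for all } x \in \S,\ t \ge 0.
\end{equation*}
Since $(P_s)$ is a strongly continuous contraction semigroup on $C^0(\S)$ by the Feller property, the map $s \mapsto P_s \mL_\mathrm{ext} f$ is continuous in the supremum norm on $[0, t]$, so the identity lifts to an equality in $C^0(\S)$. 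Dividing by $t$ and letting $t \downarrow 0$, strong continuity gives
\begin{equation*}
    \frac{1}{t}(P_t f - f) = \frac{1}{t}\int_0^t P_s \mL_\mathrm{ext} f \diff s \longrightarrow \mL_\mathrm{ext} f \quad\text{in } C^0(\S),
\end{equation*}
which shows $f \in \dom(\mL_{C^0})$ with $\mL_{C^0} f = \mL_\mathrm{ext} f$.

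The only potentially delicate point is ensuring that the process appearing in the definition of the extended generator is a genuine martingale, not merely a local one, so that taking expectations is valid. However, compactness of $\S$ combined with the continuity of $f$ and $\mL_\mathrm{ext} f$ makes both summands uniformly bounded on finite time intervals, so localisation is unnecessary and the Fubini step is straightforward. Thus the argument reduces to a clean application of Dynkin's formula together with strong continuity of the Feller semigroup.
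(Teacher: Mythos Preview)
Your proof is correct and follows essentially the same approach as the paper. For the first inclusion the paper simply cites a reference (Liggett's Theorem 3.32) for the fact that the extended generator extends the $C^0$-generator, whereas you spell out the Dynkin-formula argument; for the reverse inclusion your argument is identical to the paper's, including the upgrade from local to true martingale via boundedness and the use of strong continuity of $s\mapsto P_s\mL_\mathrm{ext}f$ to pass to the limit in sup norm.
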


\begin{proof}
    It follows from~\cite[Theorem 3.32]{liggett10} that the extended generator is an extension of the $C^0$-generator.

    Let $f \in C^0(\S)$ be such that $f \in \dom(\mL_\mathrm{ext})$ and $\mL_\mathrm{ext} f \in C^0(\S)$. Then
    $$
        M(t) = f(X(t)) - f(X(0)) - \int_0^t \mL_\mathrm{ext}f(X(s)) \diff s
    $$
    is a local martingale. Hence
    $$
        \mathbb{E} \left[ \sup_{0 \le s \le t} \left|f(X(t)) - f(X(0)) - \int_0^t \mL_\mathrm{ext}f(X(s)) \diff s \right| \right] \le 2 \Vert f \Vert_\infty + t \Vert \mathcal L_\mathrm{ext} f \Vert_\infty < +\infty
    $$
    implies that $M(t)$ is a martingale and thus
    \begin{align*}
        P_t f(x) - f(x) = \mathbb E_x \left[ f(X(t)) - f(X(0)) \right] = \mathbb E_x \left[ \int_0^t \mL_\mathrm{ext} f (X(s)) \diff s \right] = \int_0^t \mathbb (P_s \mL_\mathrm{ext} f)(x) \diff s.
    \end{align*}
    The function $s \mapsto P_s \mL_\mathrm{ext} f$ is continuous from $\mathbb R_+$ to $C^0(\S)$ hence the integral $\int_0^t P_s \mL_\mathrm{ext} f \diff s$ is well-defined in $C^0$ and the pointwise equality above implies
    $$
        \frac{P_t f - f}{t} = \frac1t \int_0^t P_s \mL_\mathrm{ext} f \diff s \to \mL_\mathrm{ext} f\qquad\text{as }t\to 0,
    $$
    using the continuity of $s \mapsto P_s \mL_\mathrm{ext} f$ to obtain a $\Vert \cdot \Vert_\infty$-limit.
\end{proof}

\begin{Lem}[$C^0$-generator of the RTP process]\mbox{}
\begin{itemize}
    \item[(i)] The RTP process is Feller.
    \item[(ii)] Its $C^0$-generator is given by
    \begin{align*}
    	\Lhat_{C^0} f(x, 2) &= 2 \partial_x f(x, 2) + 2\omega \left( f(x, 0) - f(x, 2) \right), \\
    	\Lhat_{C^0} f(x, 0) &= \omega f(x, 2) + \omega f(x, -2) - 2\omega f(x, 0), \\
    	\Lhat_{C^0} f(x, -2) &= -2 \partial_x f(x, -2) + 2\omega \left( f(x, 0) - f(x, -2) \right),
    \end{align*}
    and its domain $\dom(\Lhat_{C^0})$ consists of all \( f \in C^0 ( \Shat) \) such that
\begin{align*}
	f(\cdot, \pm2) \in C^1([0, L]) \quad \text{ and } \quad \partial_x f(L, 2) =\partial_x f(0, -2) = 0.
\end{align*}
\end{itemize}
\end{Lem}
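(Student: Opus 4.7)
The plan is to establish (i) by a pathwise coupling argument exploiting the piecewise deterministic structure of the RTP and the boundedness of its velocity jump rates, and to identify the $C^0$-generator in (ii) by combining Dynkin's formula with a direct inspection of the dynamics at the stuck states $(L, +2)$ and $(0, -2)$. For (i), I would show that $\hat P_t$ maps $C^0(\Shat)$ into itself and is strongly continuous at $0$. The velocity evolves as a bounded-rate Markov chain on $\V$ whose rates are independent of $x$, and the clipped flow $x \mapsto \max(0, \min(L, x + vs))$ is continuous in $x$. Coupling two trajectories starting at $(x_0, v_0)$ and $(x_0', v_0)$ with shared Poisson jump times keeps their positions within $|x_0 - x_0'|$ of each other on any bounded time interval, modulo a small slippage when only one trajectory is jammed; taking expectations yields continuity of $(x, v) \mapsto \hat P_t f(x, v)$ for every $f \in C^0(\Shat)$. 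Strong continuity at $t = 0$ follows because with probability $1 - O(t)$ no jump occurs in $[0, t]$ and the total displacement is at most $2t$.

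For the forward inclusion in (ii), denote by $(A, D)$ the candidate operator and domain, and let $f \in D$. I would apply Dynkin's formula to expand $\hat P_t f(x, v) - f(x, v) = t\,Af(x, v) + o(t)$ uniformly in $(x, v)$. The transport contribution yields $v\,\partial_x f(x, v)$ when the particle moves in the interior, and the jumps reproduce $\sum_{\tilde v}\lambda_{v\tilde v} f(x, \tilde v)$ with the rates of Figure~\ref{fig:relative_velocity_rates}. At the stuck states $(L, +2)$ and $(0, -2)$ the particle stays put until its next velocity flip, so the transport contribution is absent there; the boundary conditions $\partial_x f(L, 2) = \partial_x f(0, -2) = 0$ are exactly what makes the uniform formulas $\pm 2\partial_x f(x, \pm 2) + 2\omega(f(x, 0) - f(x, \pm 2))$ continuous across $x = L$ and $x = 0$ respectively, and thus compatible with the trapped dynamics at those points. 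A uniform $o(t)$ remainder is then standard, using the $C^1$-bounds on $f(\cdot, \pm 2)$ together with the boundedness of the Poisson rates.

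For the reverse inclusion $\dom(\Lhat_{C^0}) \subseteq D$, let $f \in \dom(\Lhat_{C^0})$. The trapped dynamics at $(L, +2)$ give directly $\Lhat_{C^0} f(L, 2) = 2\omega(f(L, 0) - f(L, 2))$, and analogously at $(0, -2)$. At $(x, 2)$ with $x < L$, expanding $\hat P_t f(x, 2)$ along the interior run $x \mapsto x + 2t$ (which stays in $(0, L)$ with probability $1 - O(t)$) and using that $\Lhat_{C^0} f$ is the uniform limit of the difference quotient forces $f(\cdot, 2)$ to be differentiable at $x$ with derivative $\frac{1}{2}\bigl(\Lhat_{C^0} f(x, 2) - 2\omega(f(x, 0) - f(x, 2))\bigr)$, which is continuous in $x$; passing to $x \to L$ and combining with the value at the stuck point yields $\partial_x f(L, 2) = 0$. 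The symmetric argument at $(0, -2)$ completes the characterisation, and Lemma~\ref{lem:comparing_Feller_and_extended_generator} then links this description to the extended generator used in Theorem~\ref{thm:invariant_measure_generator_RTP}.

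The main obstacle is the uniform near-boundary control. For the forward direction, one has to quantify the probability that a trajectory starting at $(x, +2)$ with $x$ just below $L$ reaches and sticks at $L$ within time $t$, and show that the resulting correction is compatible at leading order $t$ with the uniform formula $2\partial_x f(x, 2) + 2\omega(f(x, 0) - f(x, 2))$. This delicate interplay between sticking and running, which has to be controlled uniformly in $x$ as $x \uparrow L$, is precisely what forces the boundary condition $\partial_x f(L, 2) = 0$ and constitutes the most technical estimate in the proof.
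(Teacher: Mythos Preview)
Your plan is sound, but differs from the paper's proof in both parts, and in (i) you are making your own life harder than necessary.

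For (i), the paper uses the same coupling idea---share the velocity jump process and compare positions---but observes that the map $y \mapsto \min(L, \max(0, y + v s))$ is $1$-Lipschitz, so $|x(t) - \tilde x(t)| \le |x_0 - \tilde x_0|$ for \emph{all} $t \ge 0$ by induction over jump times. There is no ``small slippage when only one trajectory is jammed'': when one copy is stuck at the boundary and the other is still moving toward it, the distance is strictly decreasing, not merely controlled. Your hedge is harmless but unnecessary, and dropping it gives a one-line argument.

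For (ii), the approaches are genuinely different. The paper does not compute the $C^0$-generator by expanding $\hat P_t f$ at all: it cites the extended generator of the process from \cite{GHM24} and then applies Lemma~\ref{lem:comparing_Feller_and_extended_generator}, which says that for a Feller process on a compact space, $\dom(\Lhat_{C^0}) = \{f \in C^0 : f \in \dom(\Lhat_\mathrm{ext}),\ \Lhat_\mathrm{ext} f \in C^0\}$. This outsources all the near-boundary analysis you flag as the ``main obstacle'' to the external reference. Your direct route via Dynkin's formula is self-contained and would work, but the uniform $o(t)$ control near $x = L$ for the $v = +2$ component (and symmetrically near $x = 0$ for $v = -2$) is genuine work that the paper simply avoids. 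Note also that your closing sentence about Lemma~\ref{lem:comparing_Feller_and_extended_generator} has the logic reversed: that lemma is what the paper uses to \emph{obtain} the $C^0$-generator from the extended one, not an afterthought once the $C^0$-generator is already in hand; if your direct computation succeeds you do not need it.
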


\begin{proof}
    (i) It is enough to show that $\mathbb P_{(\tilde x_0, v_0)}$ converges weakly to $\mathbb P_{( x_0,  v_0)}$ when $\tilde x_0 \to x_0$. Following arguments in \cite{GHM24}, we consider the coupling of $\mathbb P_{(\tilde x_0, v_0)}$ and $\mathbb P_{( x_0,  v_0)}$ obtained by taking the same relative velocity process $v(t)$. Since $y \mapsto \min({L}, \max(0, y + v(0) t))$ is 1-Lipschitz, we have
\begin{align*}
\left| x(t) - \tilde x(t)\right|=\left|\min({L}, \max(0, x_0 + v(0) t)) - \min({L}, \max(0, \tilde x_0 + v(0) t))\right| \le \left| x_0 - \tilde x_0 \right|
\end{align*}
for $t \le T_1$. In particular, $\left|x(T_1) - \tilde x(T_1)\right| \le \left| x_0 -\tilde x_0\right|$. So, by induction,
$
\left| x(t) - \tilde x(t) \right| \le \left| x_0  - \tilde x_0 \right|${ for all }$t \ge 0$,
and the Feller property follows.

(ii) The claim follows from the derivation of the extended generator in \cite{GHM24} and Lemma~\ref{lem:comparing_Feller_and_extended_generator}.
\end{proof}

The invariance of $\hat\mu$ (Theorem~\ref{thm:invariant_measure_generator_RTP} (i)) is already proven in \cite{hahn23} and can be directly checked here using the generator~\cite[Theorem 3.37]{liggett10}. We now provide the proof for the \( L^2(\hat\mu) \) generator and its domain.

\begin{proof}[Proof of Theorem~\ref{thm:invariant_measure_generator_RTP} (ii)]  The \( L^2(\hat\mu) \) generator is the closure of \( (\Lhat_{C^0}, \dom(\Lhat_{C^0})) \) in \( L^2(\hat\mu) \) by~\cite[Proposition II.1.7]{engel99}. We concentrate on the case \( v = 2 \) as cases \( v = 0 \) and \( v = -2 \) follow from the same arguments.
Denote $(\Lhat, \dom(\Lhat))$ the operator defined in Theorem \ref{thm:invariant_measure_generator_RTP} (ii).
 
We first show \( \overline{(\Lhat_{C^0}, \dom(\Lhat_{C^0}))} \subseteq (\Lhat, \dom(\Lhat)) \): Let \( f_n \in \dom(\Lhat_{C^0}) \) be such that
$$
 f_n \longrightarrow f \quad \text{ and } \quad \Lhat_{C^0} f_n \longrightarrow g\qquad\text{in }L^2(\hat\mu).
$$
We need to show that $f\in\dom(\Lhat)$ and $\Lhat f=g$. On $(0, L)$ one has
\[
	\partial_x f_n(\cdot, 2) = \frac12 \left( \Lhat_{C^0} f_n(\cdot, 2) - 2\omega \left[f_n(\cdot, 0) - f_n(\cdot, 2)\right] \right)
\]
hence \( \partial_x f_n(\cdot, 2) \) converges in \( L^2((0, L),\diff x) \) to \( h = \frac12\left(g(\cdot, 2) - 2\omega \left(f(\cdot, 0) - f(\cdot, 2)\right) \right)\).
For \( \phi \in C^\infty_c((0, L)) \) we have
\begin{align*}
	-\int_0^L f(\cdot, 2) \phi' \diff x = -\lim_{n \to \infty}\int_0^L f_n(\cdot, 2) \phi' \diff x = \lim_{n \to \infty} \int_0^L \partial_x f_n(\cdot, 2) \phi \diff x = \int_0^L h \phi \diff x.
\end{align*}
Hence \( f(\cdot, 2) \in H^1((0, L)) \), \( f_n (\cdot, 2) \to f(\cdot, 2) \) in \( H^1((0, L)) \), and \( g(x, 2) = 2 \partial_x f(x, 2) + 2\omega \left( f(x, 0) - f(x, 2) \right) \) for \( x \in (0, L) \).
Since function evaluations at $(L,2)$ and $(L,0)$ are well-defined and continuous from \( L^2(\hat\mu) \) to \( \mathbb R \), we obtain \( g(L, 2) = \omega \left( f(L, 0) - f(L, 2) \right) \).
Furthermore, \( F \mapsto F(L-, 2) \) is the trace operator which is continuous on \( H^1((0, L)) \), so that
\[
	f_n(L-, 2) = f_n(L, 2) \text{ for all } n \ge 0 \implies f(L-, 2) = f(L, 2),
\]
which yields $f\in\dom(\Lhat)$ and $\Lhat f = g$.

It remains to show \( (\Lhat, \dom(\Lhat)) \subseteq \overline{(\Lhat_{C^0}, \dom(\Lhat_{C^0}))} \): Let \( f \in \dom(\Lhat) \) be given. We now construct a sequence \( f_n \in \dom(\Lhat_{C^0}) \) such that \( f_n \longrightarrow f \) and \( \Lhat_{C^0} f_n \longrightarrow \Lhat f \) in $L^2(\hat\mu)$.

Let \( h_n \in C^\infty((0, L)) \) be a sequence converging to \( f(\cdot, 2) \) in \( H^1((0, L)) \).
Define
\[
	\chi_n(x) =
	\left\{
	\begin{array}{cl}
		0 & \text{ if } x \le L(1 - 1/n), \\
		\frac{x - L(1 - 1/n)}{L(1-1/(2n)) - L(1 - 1/n)} & \text{ if } x \in (L(1 - 1/n), L(1-1/(2n))), \\
		1 & \text{ if } x \ge L(1 - 1/(2n)).
	\end{array}
	\right.
\]
which is in \( C^0([0, L]) \).
Set \( g_n(x) = h_n(x) - \int_0^x h'_n(y) \chi_n(y) \diff y\). One has \( g_n \in C^1([0, L]) \) as well as
\[
	g'_n(L) = h'_n(L) - h'_n(L) \chi_n(L) = 0.
\]
Furthermore,
\begin{align*}
	\left(\int_0^x h'_n \chi_n \diff y \right)^2 &\le \left(\int_0^L (h'_n)^2 \diff y \right) \left( \int_0^L \chi_n^2 \diff y \right) \le \left(\int_0^L (h'_n)^2 \diff y \right) \cdot \frac Ln
\end{align*}
so that \( x\mapsto \int_0^x h'_n(y) \chi_n(y) \diff y \) converges to \( 0 \) uniformly and thus in \( L^2((0, L),\diff x) \).
For all \( a \in (0, L) \) we have
\begin{align*}
	\lim_{n\to\infty} \int_0^L (h_n' \chi_n)^2 \diff y \le \lim_{n\to\infty} \int_{a}^L (h_n')^2 \diff y = \int_{a}^L (\partial_x f(\cdot, +2))^2 \diff y,
\end{align*}
so taking \( a \to L \) yields
$$
	\lim_{n\to\infty} \int_0^L (h_n' \chi_n)^2 \diff y = 0.
$$
Hence \( x\mapsto \int_0^x h'_n(y) \chi_n(y) \diff y \) converges to \( 0 \) in \( H^1((0, L)) \). We deduce that \( g_n \) converges to \( f(\cdot, 2) \) in \( H^1((0, L)) \).
Because the trace \( F \mapsto F(L-, 2) \) is continuous from \( H^1((0, L)) \) to \( \mathbb R \) and \( f(L-, 2) = f(L, 2) \) we have that \( g_n \to f(\cdot, 2) \) in \( L^2([0,L],\lambda_{[0,L]} + \delta_L) \) as required.
Thus taking $f_n(x, +2) = g_n(x)$ yields convergence of \( (\Lhat_{C^0} f_n)(\cdot, +2) \) to \( \Lhat f(\cdot, +2) \) in \( L^2(\lambda_{[0,L]} + \delta_L) \). The cases $v = 0$ and $v = -2$ can be treated similarly to obtain an approximating sequence for $f\in \dom(\hat L)$ that converges in $L^2(\hat\mu)$.
\end{proof}

\section{PDMPs for sampling}\label{sec:pdmps}

In this section, we consider a probability measure $\mu$ on $\R^d$ with density proportional to $\exp(-U)$, where $U\colon\R^d\to\R$ is such that $\int_{\R^d}e^{-U(x)}\diff x<\infty$. We assume that $\mu$ satisfies a Poincar\'e inequality with constant $\frac{1}{m}$ and that the generator $(\mL,\dom(\mL))$ of the overdamped Langevin diffusion with invariant probability measure $\mu$, see Example~\ref{ex:langevinlift}, has discrete spectrum. The latter is for instance satisfied if there exists $\beta>1$ such that
\begin{equation*}
    \liminf_{|x|\to\infty} \frac{\nabla U(x)\cdot x}{|x|^\beta} >0,
\end{equation*}
see \cite[Theorem 5]{BGW15}. As in Example~\ref{ex:langevin}, we assume the lower curvature bound
\begin{equation}\label{eq:curvaturebound2}
    \nabla^2U(x)\geq -Km \cdot I_d\qquad\text{for all }x\in\R^d
\end{equation}
for some $K\geq 0$.

Piecewise deterministic Markov processes (PDMPs) have recently been studied as highly efficient non-reversible samplers in physics and statistics, where proving exponential ergodicity is crucial. Depending on communities or the specific choice of dynamics, these processes appear under different algorithmic names, including (Forward) Event Chain Monte Carlo \cite{MKK14,MDS20}, the Zig-Zag sampler \cite{BRZ19} and the Bouncy Particle sampler \cite{durmus20}.
Very recently, using the Dolbeault-Mouhot-Schmeiser approach \cite{DMS15}, results with well-controlled dimensional dependence were established in \cite{ADNR21}. These results were refined in \cite{lu22} using the variational approach by Albritton, Armstrong, Mourrat and Novack for the Zig-Zag process and the Bouncy Particle sampler. Here, we extend these results using our method, treating in particular two previously unaddressed cases: the true Zig-Zag process, without the condition of Gaussian velocities, and the Forward generalisation that incorporates stochastic jumps at events.

\subsection{Forward process}

The forward Event Chain process introduced in \cite{MDS20} is a PDMP $(X_t,V_t)$ with state space $\Shat = \R^d\times\R^d$ and invariant probability measure $\hat\mu = \mu\otimes\kappa$, where $\kappa$ is the $d$-dimensional standard Gaussian. It extends previously developed algorithms, as the Bouncy Particle Sampler and the Zig-Zag process, as the velocity update at an event is now governed more generally by a Markov kernel. We consider the \emph{full-orthogonal refresh} variant whose generator is given by 
\begin{align*}
    \Lhat f(x, v) = v \cdot \nabla_x f(x,v) +(v\cdot\nabla U(x))_+(B_-f -f)(x,v) + \gamma (\Pi_vf-f)(x,v)
\end{align*}
for $f\in C_\comp^1(\R^{2d})$ and $(x,v)\in\R^d\times\R^d$, where
\begin{equation*}
    B_\pm f(x,v)=\frac1{Z_x}\int_{\R^d}(w\cdot\nabla U(x))_\pm f(x,w)\kappa(\diff w)
\end{equation*}
and $Z_x = \int_{\R^d}(w\cdot\nabla U(x))_+\kappa(\diff w) = \frac{1}{\sqrt{2\pi}}|\nabla U(x)|$.
It moves along straight lines between events occurring with rate $(v\cdot\nabla U)_+$. At an event, the velocity $V_t$ is resampled according to the distribution $Z_{X_t}^{-1}(v\cdot\nabla U(X_t))_-\kappa(\diff v)$, corresponding to the distribution for the reflection of the new direction to trigger an event. This corresponds to a resampling of the whole vector $V_t$, including its component along the direction $\nabla U(X_t)$, as opposed to a simple reflection or flip as respectively for the Bouncy Particle Sampler or Zig-Zag process. 
The forward Event Chain process admits a natural splitting of its generator. For $f\in C_\comp^1(\R^{2d})$, we set
\begin{align*}
    \Ld f(x,v) &= v\cdot\nabla_xf(x,v)+(v\cdot\nabla U (x))_+(B_-f-f)(x,v)\quad\text{and}\\
    \Lv f(x,v) &= \int_{\R^d} f(x,w)\kappa(\diff w)-f(x,v) = \Pi_vf(x,v)-f(x,v),
\end{align*}
so that $\Lhat f = \Ld f + \gamma\Lv f$.

\begin{asm}\label{asm:U2}
    There exist $a \in [0, 1)$ and $b \ge 0$ such that
    \begin{equation}\label{eq:assumption_on_U}
        \Delta U(x) \le a |\nabla U(x)|^2 + b\qquad\text{for all }x\in\R^d.
    \end{equation}
\end{asm}
For example, Assumption~\ref{asm:U2} is satisfied with $a=0$ and $b=dL$ if $\nabla^2U(x)\leq L\cdot I_d$ for all $x\in\R^d$. 

\begin{corollary}\label{coro:rateforward} 
    Let Assumption~\ref{asm:U2} be satisfied. Then, choosing $T=m^{-1/2}$, the transition semigroup of the Forward process is exponentially contractive in $T$-average with rate
    $$
        \nu = \Omega\left( \frac{\gamma m}{\gamma^2 + \frac{m(1+K)+b}{(1-a)^2}} \right).
    $$
    In particular, if $\gamma = \Theta\left(\frac{\sqrt{m(1+K)+b}}{1-a}\right)$ then
    $
        \nu = \Omega\left( \sqrt{m}(1-a)\left/\sqrt{1+K+ b/m} \right.\right).
    $
\end{corollary}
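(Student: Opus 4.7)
The plan is to apply Theorem~\ref{thm:exponential_decay} combined with Remark~\ref{rem:decay_rate_in_simple_setting} at $T=m^{-1/2}$, so it suffices to verify Assumptions~\ref{asm:A}--\ref{asm:C}. Since $\Lv=\Pi_v-I$ is the full refreshment with respect to the Gaussian marginal $\kappa$, Remark~\ref{rem:assumption_C_full_refresh} gives Assumption~\ref{asm:A}(ii) with $m_v=1$ and \eqref{eq:C2} with $\auxConstantIntLtrTransposeLv=1$, while Assumption~\ref{asm:B} is part of the standing hypotheses on $\mu$. The remaining work is to show that $\Ld$ is a weakly antisymmetric lift of $\mL$ and to establish \eqref{eq:C1} with $\auxConstantIntLtrTransposeLtrL=O(1/(1-a))$ and $\auxConstantIntLtrTransposeLtrE=O(\sqrt{m(K+1)+b}/(1-a))$.

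For Assumption~\ref{asm:A}(i), note that $B_-(g\circ\pi)=g\circ\pi$, so the jump term in $\Ld(g\circ\pi)$ vanishes and $\Ld(g\circ\pi)(x,v)=v\cdot\nabla g(x)$. Integrating by parts in the transport piece and computing the jump adjoint explicitly (swapping $(\cdot)_\pm$ with $B_\mp$) yields
\[
    \Ld^* h(x,v) = -v\cdot\nabla_x h(x,v) + (v\cdot\nabla U(x))_-\bigl[B_+h(x)-h(x,v)\bigr].
\]
Applied to $h=g\circ\pi$ the jump contribution again vanishes, so $\Ld^*(g\circ\pi)=-\Ld(g\circ\pi)$. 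Since $\Lhat$ is a lift of $\mL$ by Example~\ref{ex:langevinlift} and $\Lv(g\circ\pi)=0$ on a core of $\mL$, Remark~\ref{rem:Ld_is_also_a_lift} then gives that $\Ld$ is a lift of $\mL$.

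To prove \eqref{eq:C1} I follow Remark~\ref{rem:C1_with_transpose} and compute
\[
    \Ld^*\Ld(g\circ\pi)(x,v) = -v^\top\nabla^2 g(x) v + (v\cdot\nabla U(x))_-\bigl[B_+(v\cdot\nabla g)(x) - v\cdot\nabla g\bigr].
\]
Its $\kappa$-mean equals $-2\mL g$ by the Gaussian identities $\int(w\cdot a)_\pm(w\cdot b)\,\kappa(dw) = \pm(a\cdot b)/2$ and $\int(w\cdot a)_\pm\,\kappa(dw)=|a|/\sqrt{2\pi}$, consistent with the second-order lift property. For the variance, the Hessian summand contributes $2\|\nabla^2 g\|_F^2$ exactly as in Example~\ref{ex:langevin}, while the jump summand is bounded pointwise by $C|\nabla U(x)|^2|\nabla g(x)|^2$ via Cauchy--Schwarz and standard fourth-moment identities for $(v\cdot a)_-$. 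This yields
\[
    \Var_{\kappa}\bigl(\Ld^*\Ld(g\circ\pi)\bigr) \le C\bigl(\|\nabla^2 g\|_F^2 + |\nabla U|^2|\nabla g|^2\bigr),
\]
and integrating the Hessian term in $x$ gives, by Bochner and the curvature bound~\eqref{eq:curvaturebound2}, $\int\|\nabla^2 g\|_F^2 \,d\mu \le \|2\mL g\|^2_{L^2(\mu)} + 2Km\,\mathcal{E}(g)$.

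The main obstacle is controlling the jump contribution $\int|\nabla U|^2|\nabla g|^2\,d\mu$, for which Assumption~\ref{asm:U2} enters. Integration by parts against $e^{-U}$ gives
\[
    \int|\nabla g|^2|\nabla U|^2\,d\mu = \int|\nabla g|^2\Delta U\,d\mu + \int\nabla(|\nabla g|^2)\cdot\nabla U\,d\mu.
\]
Inserting $\Delta U\le a|\nabla U|^2+b$ for the first integral and bounding $|\nabla(|\nabla g|^2)\cdot\nabla U|\le 2\|\nabla^2 g\|_F|\nabla g||\nabla U|$ for the second, then applying Cauchy--Schwarz together with Young's inequality with parameter $(1-a)/2$ absorbs the quadratic in $|\nabla U||\nabla g|$ on the left to yield
\[
    \int|\nabla U|^2|\nabla g|^2\,d\mu \le \frac{4}{(1-a)^2}\int\|\nabla^2 g\|_F^2\,d\mu + \frac{4b}{1-a}\mathcal{E}(g).
\]
Chaining with the Bochner bound establishes \eqref{eq:C1} with constants of the advertised orders, and plugging these into Remark~\ref{rem:decay_rate_in_simple_setting} (using $1\le 1/(1-a)^2$ to absorb the loose factor) produces $\nu=\Omega\bigl(\gamma m/(\gamma^2 + (m(K+1)+b)/(1-a)^2)\bigr)$. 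Optimising $\gamma$ then gives the stated rate.
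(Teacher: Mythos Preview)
Your proof is correct and follows essentially the same approach as the paper: verify Assumptions~\ref{asm:A}--\ref{asm:C}, invoke Remark~\ref{rem:assumption_C_full_refresh} for the full refreshment, compute $\Ld^*\Ld(g\circ\pi)$ via Remark~\ref{rem:C1_with_transpose}, and control the crucial term $\int|\nabla U|^2|\nabla g|^2\,d\mu$ by integration by parts against $e^{-U}$, Assumption~\ref{asm:U2}, and Young's inequality with parameter $(1-a)/2$, then chain with Bochner and conclude via Remark~\ref{rem:decay_rate_in_simple_setting}. Your treatment is slightly more explicit in verifying weak antisymmetry (computing $\Ld^*(g\circ\pi)$ directly) and slightly less explicit with the Gaussian moment constants in the variance bound, but the structure and the key estimate \eqref{eq:nablaUnablagbound} coincide.
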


\begin{Rem}
    In case $m\cdot I_d\leq\nabla^2U\leq L\cdot I_d$, and with $\gamma = \Theta\left(\sqrt{m+dL}\right)$, we obtain
    $
        \nu = \Omega\left(\sqrt{m}\big/\sqrt{d{L}/{m})}\big.\right) 
    $.
    We thus recover the same dimension dependence as for the Bouncy Particle sampler in~\cite{lu22} but an additional dependence on the condition number.

\end{Rem}

\begin{proof}[Proof of Corollary~\ref{coro:rateforward}]
    Assumptions~\ref{asm:A} and \ref{asm:B} can be verified as for the Langevin dynamics in Example~\ref{ex:langevin}. Indeed, the forward Event Chain process is a lift of an overdamped diffusion with generator $(\mL,\dom(\mL))$, and Assumption~\ref{asm:C}(ii) is satisfied with $\LvInvPoincareConstant=1$. Assumption \eqref{eq:C2} holds with $C_2=\frac{1}{\sqrt{2m}}$ by Remark~\ref{rem:assumptions}\eqref{rem:assumption_C_full_refresh}, we hence focus on Assumption~\eqref{eq:C1}.

    It is enough to estimate $\Vert\Ld^*\Ld(g \circ \pi) + (\mL g) \circ \pi\Vert_{L^2(\hat\mu)}$ by Remark~\ref{rem:assumptions}\eqref{rem:C1_with_transpose}. Noticing
    \begin{align*}
        \Ld(g\circ\pi)=v\cdot\nabla_xg\quad\text{and}\quad \Ld^*f=-v\cdot\nabla_xf(x,v)+(v\cdot\nabla U(x))_-(B_+f-f)(x,v)
    \end{align*}
    for $f\in C_\comp^1(\R^{2d})$,
    one has
    \begin{align*}
    \Ld^*\Ld(g \circ \pi)(x,v)=-v^\top\nabla^2 g(x)v+\frac1{Z_x}(v\cdot\nabla U(x))_-\left(\frac12\nabla U(x)\cdot\nabla g(x)-Z_xv\cdot\nabla g(x)\right).
    \end{align*}
    By Cauchy-Schwarz,
    \begin{align*}
        \int (\Ld^*(v\cdot\nabla_xg))^2\kappa(\diff v)&\leq 2\int (v\cdot\nabla^2gv)^2\kappa(\diff v)\\
        &\quad+ \frac{2}{Z^2}\int(v\cdot\nabla U)_-^2\left(\frac12\nabla U\cdot\nabla g-Zv\cdot\nabla g\right)^2\kappa(\diff v)\\
        &=4\|\nabla^2g\|_F^2 + 2(\Delta g)^2\\
        &\quad +2(3+\frac{\pi}{4})(\nabla U\cdot\nabla g)^2 + |\nabla U|^2|\nabla g|^2.
    \end{align*}
    where $\Vert \cdot\Vert_F$ is the Frobenius norm. Using \eqref{eq:assumption_on_U} we estimate
    \begin{align*}
        \int_{\R^d}|\nabla U|^2|\nabla g|^2\diff\mu &= \int_{\R^d}\Delta U|\nabla g|^2\diff\mu + 2\int_{\R^d}\nabla U^\top\nabla^2g\nabla g\diff\mu\\
        &\leq a\int_{\R^d}|\nabla U|^2|\nabla g|^2\diff\mu + b\int_{\R^d}|\nabla g|^2\diff\mu + 2\int_{\R^d}\nabla U^\top\nabla^2g\nabla g\diff\mu.
    \end{align*}
    Now using $2\nabla U^\top\nabla ^2 g\nabla g\leq 2|\nabla U||\nabla g|\lVert\nabla^2g\rVert_F\leq p|\nabla U|^2|\nabla g|^2 + \frac{1}{p}\lVert\nabla ^2g\rVert_F^2$ for any $p>0$, choosing $p=\frac{1-a}{2}$ and rearranging yields
    \begin{equation}\label{eq:nablaUnablagbound}
        \begin{aligned}
        \int_{\R^d}|\nabla U|^2|\nabla g|^2\diff\mu &\leq \frac{2b}{1-a}\int_{\R^d}|\nabla g|^2\diff\mu + \frac{4}{(1-a)^2}\int_{\R^d}\lVert\nabla^2g\rVert_F^2\diff\mu\\
        &\leq \left(\frac{2b}{1-a}+\frac{4Km}{(1-a)^2}\right)\int_{\R^d}|\nabla g|^2\diff\mu + \frac{4}{(1-a)^2}\lVert 2\mL g\rVert^2_{L^2(\mu)}.
    \end{aligned}
    \end{equation}
    Finally, the bounds $(\Delta g)^2\leq 2(2\mL g)^2 + (\nabla U\cdot\nabla g)^2$ and $(\nabla U\cdot\nabla g)^2\leq |\nabla U|^2|\nabla g|^2$ as well as Bochner's identity yield
    \begin{align*}
        \MoveEqLeft\Vert \Ld^*\Ld (g \circ \pi) \Vert_{L^2(\hat\mu)}^2
        \\
        &\le C\left( \big\Vert \Vert \nabla^2g\Vert_F \big\Vert_{L^2(\mu)}^2 + \Vert \Delta g\Vert_{L^2(\mu)}^2 + \Vert \nabla U\cdot\nabla g\Vert_{L^2(\mu)}^2 + \big\Vert |\nabla U||\nabla g|\big\Vert_{L^2(\mu)}^2 \right) \\
        &\leq C\left(\lVert2\mL g\rVert_{L^2(\mu)}^2 + 2Km\E(g) + \lVert|\nabla U||\nabla g|\rVert_{L^2(\mu)}^2\right)\\
        &\leq C\left(\frac{1}{(1-a)^2}\Vert 2\mL g\Vert_{L^2(\mu)}^2 + \frac{b(1-a)+Km}{(1-a)^2}(2\E(g))\right)\\
        &\leq C\left(\frac{b(1-a)+(1+K)m}{m(1-a)^2}\right)\lVert2\mL g\rVert_{L^2(\mu)}^2
    \end{align*}
    where $C>0$ denotes an absolute constant changing from line to line.
    Therefore, overall, one can choose $C_1^2 = C\Big(\frac{{b+(1+K)m}}{m(1-a)^2}\Big)$ and the claim follows by Remark~\ref{rem:decay_rate_in_simple_setting}.
\end{proof}

\subsection{Zig-Zag process}

The convergence to equilibrium for the Zig-Zag process with Gaussian velocities is considered in \cite{lu22}. In practice, one usually considers velocities chosen uniformly among the (scaled) coordinate directions $\{\pm\sqrt{d}e_i\colon i=1,\dots,n\}$ or from the hypercube $\{-1,+1\}^d$ \cite{BRZ19}. We consider any of these dynamics, i.e.\ let $\V$ be $\R^d$, $\{\pm\sqrt{d}e_i\colon i=1,\dots,n\}$, or $\{-1,+1\}^d$, and consider $\kappa = \mathcal{N}(0,I_d)$ or $\kappa = \mathrm{Unif}(\V)$, correspondingly.

The Zig-Zag process $(X_t,V_t)$ is a PDMP with state space $\Shat = \R^d\times\V$ that, between events, moves with constant velocity $v\in\V$. The $k$-th component $v_k$ of the velocity is then flipped with rate $(v_k\partial_kU(x))_+$. Additionally, with rate $\gamma>0$, a new velocity is chosen from $\V$ according to the distribution $\kappa$. Note that this velocity refreshment mechanism may not be needed for ergodicity in the hypercube case~\cite{BRZ19}, but it is essential in our approach. This process has $\hat\mu = \mu\otimes\kappa$ as its invariant probability measure \cite{BRZ19}, and the generator of the associated transition semigroup in $L^2(\hat\mu)$ is given by
\begin{align*}
    \Lhat f(x,v)&=v\cdot\nabla_xf(x,v)+\sum_{i=1}^d(v_i\partial_i U(x))_+(f(x,v-2e_iv_i)-f(x,v))\\
    &\qquad+\gamma \left(\int f(x,w)\kappa(\diff w)-f(x,v)\right)
\end{align*}
for compactly supported functions $f\in C_\comp^1(\R^{2d})$ and $(x,v)\in\R^d\times\V$. Under additional assumptions, see \cite{Durmus2021PDMP}, $C_\comp^1(\R^d)$ forms a core for $\Lhat$. For $f\in\dom(\Lhat)$, we set
\begin{align*}
    \Ld f(x,v) &= v\cdot\nabla_xf(x,v)+\sum_{i=1}^d(v_i\partial_i U(x))_+(f(x,v-2e_iv_i)-f(x,v))\quad\text{and}\\
    \Lv f(x,v) &= \int_\V f(x,w)\kappa(\diff w)-f(x,v) = (\Pi_vf)(x,v)-f(x,v),
\end{align*}
so that $\Lhat f = \Ld f + \gamma\Lv f$.

\begin{asm}\label{asm:U3}
    There exists $L\geq 0$ such that $\partial_{kk}U(x)\leq L$ for all $k\in\{1,\dots,d\}$ and $x\in\R^d$.
\end{asm}
In particular, Assumption~\ref{asm:U3} holds if $\nabla^2U(x)\leq L\cdot I_d$ for all $x\in\R^d$. 

\begin{corollary}\label{coro:rateZZ1}
    Let Assumption \ref{asm:U3} be satisfied.
    \begin{enumerate}[(i)]
        \item Let $\kappa = \mathcal{N}(0,I_d)$ or $\kappa = \mathrm{Unif}(\{-1,+1\}^d)$. Then, choosing $T=m^{-1/2}$, the transition semigroup $(\hat P_t)$ of the Zig-Zag process is exponentially contractive in $T$-average with rate
        \begin{equation*}
            \nu = \Omega\left(\frac{\gamma m}{\gamma^2+m(1+K)+L}\right).
        \end{equation*}
         If $\gamma = \Theta\left(\sqrt{m(1+K)+L}\right)$ then
        $
            \nu = \Omega\left( \sqrt{m}\left/\sqrt{(1+K) + L/m} \right.\right).
        $
        \item Let $\kappa = \mathrm{Unif}(\{\pm\sqrt{d}e_i\colon i=1,\dots,n\})$. Then, choosing $T=m^{-1/2}$, the transition semigroup $(\hat P_t)$ of the Zig-Zag process is exponentially contractive in $T$-average with rate
        \begin{equation*}
            \nu = \Omega\left(\frac{\gamma m}{\gamma^2+md(1+K)+Ld}\right).
        \end{equation*}
         If $\gamma = \Theta\left(\sqrt{(m(1+K)+L)d}\right)$ then
        $
            \nu = \Omega\left( \sqrt{m}\left/\sqrt{((1+K)+L/m)d} \right.\right).
        $

    \end{enumerate}
\end{corollary}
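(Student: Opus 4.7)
The plan is to verify Assumptions~\ref{asm:A}--\ref{asm:C} for the splitting $\Lhat=\Ld+\gamma\Lv$ and then invoke Theorem~\ref{thm:exponential_decay} at $T=m^{-1/2}$ together with Remark~\ref{rem:decay_rate_in_simple_setting}. Assumption~\ref{asm:B} holds by hypothesis. For Assumption~\ref{asm:A}, note that $(f\circ\pi)(x,v-2e_iv_i)=f(x)$, so the flip part of $\Ld$ vanishes on pullbacks and $\Ld(f\circ\pi)(x,v)=v\cdot\nabla f(x)$, exactly as for Langevin dynamics in Example~\ref{ex:langevin}. Using that $\int v\,\diff\kappa=0$ and $\int vv^\top\,\diff\kappa=I_d$ in all three choices of $\kappa$, the lift identities \eqref{eq:deflift1}--\eqref{eq:deflift2} reduce to the computation of that example, and weak antisymmetry is immediate from $\Ld^*(f\circ\pi)=-v\cdot\nabla f$. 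Since $\Lv=\Pi_v-I$ is a bounded full refresh with spectral gap $m_v=1$, \eqref{eq:C2} holds with $\auxConstantIntLtrTransposeLv=1$ by Remark~\ref{rem:assumption_C_full_refresh}.

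The main work is \eqref{eq:C1}, which I would verify via Remark~\ref{rem:C1_with_transpose}. The involution $v\mapsto v-2e_iv_i$ leaves $\hat\mu$ invariant and interchanges $(v_i\partial_iU)_+$ with $(v_i\partial_iU)_-$, yielding
\begin{equation*}
    \Ld^*h(x,v)=-v\cdot\nabla_xh(x,v)+\sum_{i=1}^d(v_i\partial_iU)_-\bigl(h(x,v-2e_iv_i)-h(x,v)\bigr),
\end{equation*}
and hence for $h(x,v)=v\cdot\nabla g(x)$,
\begin{equation*}
    \Ld^*\Ld(g\circ\pi)(x,v)=-v^\top\nabla^2g(x)\,v-2\sum_{i=1}^d v_i(v_i\partial_iU(x))_-\partial_ig(x).
\end{equation*}
The symmetry of $\kappa$ under coordinate sign flips, together with $\mathbb E_\kappa[v_i^2]=1$, gives $\mathbb E_\kappa[v_i(v_i\partial_iU)_-]=-\tfrac12\partial_iU$, so $\Pi_v\Ld^*\Ld(g\circ\pi)=-2\mL g$ --- reassuringly confirming the second-order lift property --- and the task reduces to bounding the $v$-variance of the displayed expression.

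In cases~(i) the components of $v$ are independent, so the $v$-variance decouples into a Hessian contribution of size $C\bigl\Vert\Vert\nabla^2g\Vert_F\bigr\Vert_{L^2(\mu)}^2$ and a flip contribution of size $C\sum_i\int(\partial_iU)^2(\partial_ig)^2\diff\mu$. Bochner's identity (as in Example~\ref{ex:langevin}) bounds the first by $C\bigl(\Vert 2\mL g\Vert_{L^2(\mu)}^2+Km\,\mathcal E(g)\bigr)$, and a per-coordinate integration by parts in the $i$-th variable, combined with Assumption~\ref{asm:U3} and Young's inequality, bounds the $i$-th summand of the second by $C\bigl(L\int(\partial_ig)^2\diff\mu+\int(\partial_{ii}g)^2\diff\mu\bigr)$; summing in $i$ and invoking Bochner again yields $C\bigl(L\,\mathcal E(g)+\Vert 2\mL g\Vert^2+Km\,\mathcal E(g)\bigr)$. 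Hence $\auxConstantIntLtrTransposeLtrL=O(1)$ and $\auxConstantIntLtrTransposeLtrE=O(\sqrt{L+Km})$, and Remark~\ref{rem:decay_rate_in_simple_setting} yields (i) at the optimal choice $\gamma=\Theta(\sqrt{m(1+K)+L})$.

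The coordinate-direction case~(ii) is the main obstacle because $v=\sigma\sqrt d\,e_k$ has only one non-zero component and $v_k^2=d$, inflating each occurrence of $v^\top Mv$ or $v_i^2$ by a factor $d$. The $v$-variance would then be bounded by
\begin{equation*}
    C\int\!\Bigl[d\,\Vert\nabla^2g\Vert_F^2\;+\;(\nabla U\cdot\nabla g)^2\;+\;d\sum_{k=1}^d(\partial_kU)^2(\partial_kg)^2\Bigr]\diff\mu,
\end{equation*}
where crucially the $(\nabla U\cdot\nabla g)^2$ contribution carries \emph{no} extra $d$-factor since it is $k$-independent. The first and third terms are controlled as in (i), giving an $O(d)$-prefactor on $\Vert 2\mL g\Vert^2+(L+Km)\mathcal E(g)$; the middle term no longer decouples per coordinate and is handled by the Forward-type integration by parts of Corollary~\ref{coro:rateforward}, applicable since Assumption~\ref{asm:U3} forces $\Delta U\le dL$, i.e.\ Assumption~\ref{asm:U2} with $a=0$ and $b=dL$, and yields $O(\Vert 2\mL g\Vert^2+(Km+dL)\mathcal E(g))$. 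Altogether $\auxConstantIntLtrTransposeLtrL=O(\sqrt d)$ and $\auxConstantIntLtrTransposeLtrE=O(\sqrt{d(L+Km)})$, and Remark~\ref{rem:decay_rate_in_simple_setting} produces (ii). Tracking that the final $d$-dependence remains linear --- which hinges on the $k$-independence of $(\nabla U\cdot\nabla g)^2$ and on avoiding the lossy bound $\sum_k(\partial_kU)^2(\partial_kg)^2\le|\nabla U|^2|\nabla g|^2$ --- is the key technical subtlety.
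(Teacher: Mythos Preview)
Your proof is correct and follows essentially the same route as the paper: verify the lift structure, compute $\Ld^*\Ld(g\circ\pi)$ explicitly, bound its $\kappa$-variance by the Hessian and flip contributions, control the Hessian by Bochner and the flip term by the per-coordinate integration by parts (which is exactly the paper's estimate \eqref{eq:Uggoodbound}), and conclude via Remark~\ref{rem:decay_rate_in_simple_setting}.

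One simplification worth noting for case~(ii): the separate $(\nabla U\cdot\nabla g)^2$ term you isolate does not actually arise. The paper's direct variance calculation for the flip part gives
\[
\Var_\kappa\Bigl(\sum_k(v_k\partial_kU)_-v_k\partial_kg\Bigr)=\tfrac d2\sum_k(\partial_kU)^2(\partial_kg)^2-\tfrac14(\nabla U\cdot\nabla g)^2,
\]
so this term appears only with a helpful negative sign and may simply be dropped. Your detour through the Forward-type estimate \eqref{eq:nablaUnablagbound} (via Assumption~\ref{asm:U2} with $a=0$, $b=dL$) is therefore unnecessary here, though it is harmless for the final rate and is precisely what the paper uses in the subsequent corollary under the weaker Assumption~\ref{asm:U2}.
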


In \cite{lu22}, quantitative convergence rates for the Zig-Zag process with Gaussian velocities are shown. Corollary~\ref{coro:rateZZ1} recovers the same scaling in $m$, $d$ and $L$ under slightly weaker assumptions in the practically relevant case of velocities chosen from the hypercube \cite{bierkens19}.

\begin{proof}[Proof of Corollary~\ref{coro:rateZZ1}]
    Assumptions~\ref{asm:A} and \ref{asm:B} can be verified as for the Langevin dynamics in Example~\ref{ex:langevin}. Indeed, the Zig-Zag process satisfies $\Lhat(f\circ\pi)(x,v) = v\cdot\nabla f(x)$ for all $f\in C_\comp^\infty(\R^d)$. Since the distributions $\kappa$ we consider are centred and have identity covariance matrix, the Zig-Zag process is a lift of the overdamped Langevin diffusion with invariant measure $\mu$ and generator $\mL = -\frac{1}{2}\nabla^*\nabla$ in $L^2(\mu)$ and core $C_\comp^\infty(\R^d)$ for any of these choices of velocity distribution, and Assumption \ref{asm:C}(ii) is satisfied with $m_v=1$. Assumption~\eqref{eq:C2} holds with $C_2=1$ by Remark~\ref{rem:assumptions}\eqref{rem:assumption_C_full_refresh}, we hence focus on Assumption~\eqref{eq:C1}.

    We have
    \begin{align*}
        \Ld^*f(x,v)&=-v\cdot\nabla_xf + \sum_{k=1}^d(v_k\partial_kU(x))_-(f(x,v-2e_kv_k)-f(x,v))
    \end{align*}
    for all $f\in C_\comp^1(\R^{2d})$ and thus
    \begin{equation*}
        \Ld^*\Ld(g\circ\pi) = -v\cdot\nabla^2g(x)v -2 \sum_{k=1}^d(v_k\partial_kU(x))_-v_k\partial_kg(x).
    \end{equation*}
    for all $g\in C_\comp^\infty(\R^d)$. This gives
    \begin{align*}
        \Var_\kappa(\Ld^*\Ld(g\circ\pi))\leq 2\Var_\kappa\big(v\cdot\nabla^2gv\big) + 8\Var_\kappa\left(\sum_{k=1}^d(v_k\partial_kU)_-v_k\partial_kg\right).
    \end{align*}
    This can be bounded using the moments of $\kappa$ in the following way.
    \begin{enumerate}[(i)]
        \item If $\kappa = \mathcal{N}(0,I_d)$ or $\kappa = \mathrm{Unif}(\{-1,+1\}^d)$, then 
        \begin{align*}
            \Var_\kappa\big(v\cdot\nabla^2gv\big) &= \sum_{i=1}^d\mathbb{E}_\kappa(v_i^4)(\partial_{ii}g)^2 + \sum_{i\neq j}\mathbb{E}_\kappa(v_i^2v_j^2)(2(\partial_{ij}g)^2+\partial_{ii}g\partial_{jj}g) -(\Delta g)^2\\
            &\leq2\Vert\nabla^2 g\Vert_{F}^2,\qquad\qquad\text{and}\\
            \MoveEqLeft\Var_\kappa\left(\sum_{k=1}^d(v_k\partial_kU)_-v_k\partial_kg\right) = \sum_{k=1}^d\Var_\kappa\big((v_k\partial_kU)_-v_k\partial_kg\big)\\
            &=\sum_{k=1}^d\mathbb{E}_\kappa\left((v_k\partial_kU)_-^2(v_k\partial_kg)^2\right)-\left(\mathbb{E}_\kappa\left((v_k\partial_kU)_-v_k\partial_kg\right)\right)^2\\
            &=\sum_{k=1}^d\partial_kU^2\partial_kg^2\left(\frac{1}{2}\mathbb{E}_\kappa(v_k^4)-\left(\frac{1}{2}\mathbb{E}_\kappa(v_k^2)\right)^2\right)\\
            &=C_\kappa\sum_{k=1}^d(\partial_kU)^2(\partial_kg)^2
        \end{align*}
        with $C_\kappa=\frac{5}{4}$ in the Gaussian case, and $C_\kappa=\frac{1}{4}$ in the hypercube case.
        Hence 
        \begin{equation*}
            \Vert\Ld^*\Ld(g\circ\pi)+2(\mL g)\circ\pi\Vert_{L^2(\hat\mu)}^2 \leq 4\big\Vert\Vert\nabla^2g\Vert_F\big\Vert_{L^2(\mu)}^2 + 8C_\kappa \sum_{k=1}^d\int_{\R^d}(\partial_kU)^2(\partial_kg)^2\diff\mu. 
        \end{equation*}
    
        \item If $\kappa = \mathrm{Unif}(\{\pm\sqrt{d}e_i\colon i=1,\dots,n\})$ is the uniform distribution on the scaled coordinate directions, then
        \begin{align*}
            &\Var_\kappa\big(v\cdot\nabla^2gv\big) = d\sum_{k=1}^d(\partial_{kk}g)^2 - (\Delta g)^2 \leq d\Vert\nabla^2g\Vert_F^2 ,\qquad\qquad\text{and}\\
            &\Var_\kappa\left(\sum_{k=1}^d(v_k\partial_kU)_-v_k\partial_kg\right)\\
            &\qquad=\mathbb{E}_\kappa\left(\sum_{k=1}^d(v_k\partial_kU)_-^2(v_k\partial_kg)^2\right) - \left(\mathbb{E}_\kappa\left(\sum_{k=1}^d(v_k\partial_kU)_-v_k\partial_kg\right)\right)^2\\
            &\qquad=\frac{d}{2}\sum_{k=1}^d(\partial_kU)^2(\partial_kg)^2 - \frac{1}{4}(\nabla U\cdot\nabla g)^2\leq \frac{d}{2}\sum_{k=1}^d(\partial_kU)^2(\partial_kg)^2
        \end{align*}
        This gives
        \begin{equation*}
            \Vert\Ld^*\Ld(g\circ\pi)+2(\mL g)\circ\pi\Vert_{L^2(\hat\mu)}^2 \leq 2d\big\Vert\Vert\nabla^2g\Vert_F\big\Vert_{L^2(\mu)}^2 + 4d \sum_{k=1}^d\int_{\R^d}(\partial_kU)^2(\partial_kg)^2\diff\mu. 
        \end{equation*}
    \end{enumerate}
    It thus remains to bound $\sum_{k=1}^d\int_{\R^d}(\partial_kU)^2(\partial_kg)^2\diff\mu$ in all three cases.
    An integration by parts and $2ab\leq \frac{1}{2}a^2+2b^2$ gives
    \begin{align*}
        \MoveEqLeft\sum_{k=1}^d\int_{\R^d} (\partial_k U)^2(\partial_k g)^2\diff\mu = \sum_{k=1}^d\int_{\R^d} (\partial_kg)^2\partial_{kk}U\diff\mu+2\sum_{k=1}^d\int_{\R^d} \partial_k U\partial_{kk}g\partial_kg\diff\mu\\
        &\leq \sum_{k=1}^d\int_{\R^d} (\partial_kg)^2\partial_{kk}U\diff\mu + \sum_{k=1}^d \int_{\R^d}\left( \frac{1}{2}(\partial_kU)^2(\partial_kg)^2 + 2(\partial_{kk}g)^2\right)\diff\mu,
    \end{align*}
    so that rearranging yields
    \begin{equation*}
        \sum_{k=1}^d\int_{\R^d} (\partial_k U)^2(\partial_k g)^2\diff\mu \leq 2\sum_{k=1}^d\int_{\R^d}\partial_{kk}U(\partial_kg)^2\diff\mu + 4\sum_{k=1}^d\int_{\R^d}(\partial_{kk}g)^2\diff\mu. 
    \end{equation*}
    Therefore, by Assumption~\ref{asm:U3},
    \begin{equation}\label{eq:Uggoodbound}
        \sum_{k=1}^d\int_{\R^d} (\partial_k U)^2(\partial_k g)^2\diff\mu\leq 2L\Vert|\nabla g|\Vert_{L^2(\mu)}^2 + 4\big\Vert\Vert\nabla^2g\Vert_F\big\Vert_{L^2(\mu)}^2.  
    \end{equation}
    Hence, using Bochner's identity, \eqref{eq:curvaturebound2} and $\lVert|\nabla g|\rVert_{L^2(\mu)}^2\leq\frac{1}{m}\lVert2\mL g\rVert_{L^2(\mu)}^2$, Assumption \eqref{eq:C1} is satisfied with 
    \begin{equation*}
        C_1^2 = 4+32C_\kappa + \frac{16LC_\kappa+(4+32 C_\kappa)Km}{m}\leq  44(1+K)+20\frac{L}{m}
    \end{equation*}
    in the Gaussian and hypercube case, and with
    \begin{equation*}
        C_1^2 = 18d + \frac{8dL+18dKm}{m} = 18d(1+K) + 8d\frac{L}{m}
    \end{equation*}
    in the case of uniform distribution on the scaled coordinate directions. One then concludes by Remark~\ref{rem:decay_rate_in_simple_setting}.

\end{proof}

If instead of Assumption~\ref{asm:U3} we assume the weaker Assumption~\ref{asm:U2}, we obtain the same rate as for the Forward process in Corollary~\ref{coro:rateforward}, at least for Gaussian or uniform on the hypercube velocities. 
\begin{corollary}
    Let Assumption \ref{asm:U2} be satisfied.
    \begin{enumerate}[(i)]
        \item Let $\kappa = \mathcal{N}(0,I_d)$ or $\kappa = \mathrm{Unif}(\{-1,+1\}^d)$. Then, choosing $T=m^{-1/2}$, the transition semigroup $(\hat P_t)$ of the Zig-Zag process is exponentially contractive in $T$-average with rate
        \begin{equation*}
            \nu = \Omega\left( \frac{\gamma m}{\gamma^2 + \frac{m(1+K)+b}{(1-a)^2}} \right).
        \end{equation*}
         If $\gamma = \Theta\left(\frac{\sqrt{m(1+K)+b}}{1-a}\right)$ then
        $
            \nu = \Omega\left( (1-a)\sqrt{m}\left/\sqrt{(1+K) + b/m} \right.\right).
        $
        \item Let $\kappa = \mathrm{Unif}(\{\pm\sqrt{d}e_i\colon i=1,\dots,n\})$. Then, choosing $T=m^{-1/2}$, the transition semigroup $(\hat P_t)$ of the Zig-Zag process is exponentially contractive in $T$-average with rate
        \begin{equation*}
            \nu = \Omega\left( \frac{\gamma m}{\gamma^2 + \frac{md(1+K)+bd}{(1-a)^2}} \right).
        \end{equation*}
        If $\gamma = \Theta\left(\frac{\sqrt{md(1+K)+bd}}{(1-a)^2}\right)$ then
        $
            \nu = \Omega\left( (1-a)\sqrt{m}\left/\sqrt{((1+K) + b/m)d} \right.\right).
        $
    \end{enumerate}
\end{corollary}
\begin{proof}
    The only difference to the proof of Corollary~\ref{coro:rateZZ1} is the treatment of the term $\sum_{k=1}^d\int_{\R^d}(\partial_kU)^2(\partial_kg)^2\diff\mu$ in the verification of Assumption~\eqref{eq:C1}. We can use \eqref{eq:nablaUnablagbound} to get
    \begin{align*}
        \sum_{k=1}^d\int_{\R^d} (\partial_k U)^2(\partial_k g)^2\diff\mu &\leq \big\Vert|\nabla U||\nabla g|\big\Vert_{L^2(\mu)}^2 \leq \frac{2b(1-a)+4Km}{(1-a)^2}2\E(g) + \frac{4}{(1-a)^2}\Vert 2\mL g\Vert_{L^2(\mu)}^2\\
        &\leq\frac{1}{(1-a)^2}\left(4(1+K)+\frac{2b(1-a)}{m}\right)\lVert 2\mL g\rVert_{L^2(\mu)}^2
    \end{align*}
    Assumption \eqref{eq:C1} then follows with 
    \begin{align*}
        C_1^2 &= 4+4K + \frac{8C_\kappa}{(1-a)^2}\left(4(1+K)+\frac{2b(1-a)}{m}\right)\\&\leq \frac{44(1+K)}{(1-a)^2} + \frac{20b}{(1-a)m}\leq \frac{44(1+K)m+20b}{(1-a)^2m}
    \end{align*}
    in the Gaussian and hypercube case, and with
    \begin{align*}
        C_1^2 &= 2d + 2dK + \frac{4d}{(1-a)^2}\left(4(1+K)+\frac{2b(1-a)}{m}\right) \leq \frac{18d(1+K)m + 8db}{(1-a)^2m}
    \end{align*}
    in the case of uniform distribution on the scaled coordinate directions.
    One then concludes by Remark~\ref{rem:decay_rate_in_simple_setting}.

\end{proof}

\section*{Acknowledgments}

A.~Eberle and F.~Lörler were supported by the Deutsche Forschungsgemeinschaft (DFG, German Research Foundation) under Germany’s Excellence Strategy -- GZ 2047/1, Project-ID 390685813. A.~Eberle and F.~Lörler wurden gefördert durch die Deutsche Forschungsgemeinschaft (DFG) im Rahmen der Exzellenzstrategie des Bundes und der Länder -- GZ 2047/1, Projekt-ID 390685813. 
All the authors acknowledge the support of the French Agence nationale de la recherche under the grant
ANR-20-CE46-0007 (SuSa project). A.~Guillin and M.~Michel  are supported by the ANR-23-CE-40003, Conviviality. A.~Guillin has benefited from a government grant managed by the Agence Nationale de la Recherche under the France 2030
investment plan ANR-23-EXMA-0001.

\bibliography{references}

\end{document}